\titleformat{\subsection}{\normalfont\bfseries}{\thesubsection}{1em}{}
\titleformat{\subsubsection}[runin]{\normalfont\bfseries}{\thesubsubsection}{1em}{}
\renewcommand{\thesubsubsection}{(\thesubsection.\arabic{subsubsection})}
\DeclareSymbolFont{cyrletters}{OT2}{wncyr}{m}{n}
\DeclareMathSymbol{\Sha}{\mathalpha}{cyrletters}{"58}
\newcommand{\mytodo}[2][]{{%
 \let\marginpar\marginnote
 \reversemarginpar
 \renewcommand{\baselinestretch}{0.8}%
 \todo[#1]{#2}}}
\theoremstyle{plain}
\newtheorem{theorem}[subsubsection]{Theorem}
\newtheorem{corollary}[subsubsection]{Corollary}
\newtheorem{lemma}[subsubsection]{Lemma}
\newtheorem{proposition}[subsubsection]{Proposition}
\newtheorem*{theoremA}{Theorem A}
\newtheorem*{theoremB}{Theorem B}
\theoremstyle{definition}
\newtheorem{definition}[subsubsection]{Definition}
\newtheorem{remark}[subsubsection]{Remark}
\newtheorem{set-up}[equation]{Set-up}
\newcommand{\IA}{\mathbb{A}}
\newcommand{\IC}{\mathbb{C}}
\newcommand{\IQ}{\mathbb{Q}}
\newcommand{\IR}{\mathbb{R}}
\newcommand{\IS}{\mathbb{S}}
\newcommand{\IW}{\mathbb{W}}
\newcommand{\IZ}{\mathbb{Z}}
\newcommand{\End}{\mathrm{End}}
\newcommand{\Hom}{\mathrm{Hom}}
\newcommand{\Aut}{\mathrm{Aut}}
\newcommand{\Spec}{\mathrm{Spec}}
\newcommand\iso{\,{\cong}\,} 
\newcommand\tensor{{\otimes}}
\newcommand{\into}{\hookrightarrow}
\def\d/{/\mspace{-6.0mu}/}
\def\wt{\widetilde}
\def\what{\widehat}
\newcommand{\sV}{\mathcal{V}}
\newcommand{\sF}{\mathcal{F}}
\newcommand{\sH}{\mathcal{H}}
\newcommand{\w}{\omega}
\newcommand{\Ohm}{\Omega}
\newcommand{\id}{\mathrm{id}}
\newcommand{\fp}{\mathfrak{p}}
\newcommand{\sM}{\mathcal{M}}
\newcommand{\sO}{\mathcal{O}}
\newcommand{\Gal}{\mathrm{Gal}}
\newcommand{\Hdg}{\mathrm{Hdg}}
\newcommand{\Isom}{\mathrm{Isom}}
\newcommand{\et}{\mathrm{{\acute{e}}t}}
\newcommand{\Sh}{\mathrm{Sh}}
\newcommand{\GSp}{\mathrm{GSp}}
\newcommand{\GL}{\mathrm{GL}}
\newcommand{\sA}{\mathcal{A}}
\newcommand{\bpi}{\mathbf{\pi}}
\newcommand{\dR}{\mathrm{dR}}
\newcommand{\Fil}{\mathrm{Fil}}
\newcommand{\Ab}{\mathsf{Ab}}
\renewcommand{\Spec}{\mathrm{Spec\,}}
\newcommand{\AH}{\mathrm{AH}}
\newcommand{\an}{\mathrm{an}}
\newcommand{\Mot}{\mathsf{Mot}}
\renewcommand{\bpi}{\boldsymbol{\pi}}
\renewcommand{\H}{\mathrm{H}}
\newcommand{\sto}{\stackrel{\sim}{\to}}
\newcommand{\sG}{\mathcal{G}}
\newcommand{\fh}{\mathfrak{h}}
\newcommand{\sfK}{\mathsf{K}}
\newcommand{\shM}{\mathscr{M}}
\newcommand{\bs}{\mathbf{s}}
\newcommand{\bt}{\mathbf{t}}
\newcommand{\Loc}{\mathsf{Loc}}
\newcommand{\bv}{\boldsymbol{v}}
\newcommand{\bc}{\mathrm{bc}}
\newcommand{\sfV}{\mathsf{V}}
\newcommand{\sfW}{\mathsf{W}}
\newcommand{\sfH}{\mathsf{H}}
\newcommand{\sfR}{\mathsf{R}}
\newcommand{\am}{\mathrm{am}}
\title{{\large{\textbf{A Note on Systems of Realizations on Shimura Varieties}}}
\author{\normalfont{Ziquan Yang}}
\vspace{-1ex}
  
\date{\vspace{-5ex}}}
\begin{document}

\maketitle

\begin{abstract}
    Let $(G, \Omega)$ be a Shimura datum of abelian type. It is well known that the corresponding Shimura variety $\mathrm{Sh}(G, \Omega)$ should be a moduli space of abelian motives equipped with some additional structures. In this half-expository note, we give under some simplifying assumptions a moduli interpretation of $\mathrm{Sh}(G, \Omega)$ over the reflex field purely in terms of systems of realizations. The main purpose is to introduce some convenient formalism that can be used to avoid the technicalities about dealing with various notions of families of motives. 
\end{abstract}

\tableofcontents

\section{Introduction}

It is well known that Shimura varieties of abelian type are supposed to be moduli spaces of ``abelian motives equipped with centain additional structures''. The purpose of this note is to provide a reference for a precise statement, under some simplifying assumptions. Let $(G, \Ohm)$ be a Shimura datum of abelian type with reflex field $E$, and for each compact open subgroup $\sfK \subseteq G(\IA_f)$, denote the Shimura variety $G(\IQ) \backslash \Ohm \times G(\IA_f)/ \sfK$ by $\Sh_\sfK(G, \Ohm)_\IC$ and its canonical model over $E$ by $\Sh_\sfK(G, \Ohm)$. Let $\Sh(G, \Ohm)$ be the limit $\varprojlim_\sfK \Sh_\sfK(G, \Ohm)$ as $\sfK$ runs through compact open subgroups. 

Before giving a moduli interpretation, we first show that: 

\begin{theoremA}
\emph{(see also \ref{thm: autSystem exists})}
Assume that $(G, \Ohm)$ has $\IQ$-rational weight, the center of $G$ has no anisotropic torus split over $\IR$, and $(G, \Ohm)$ has a nice Hodge-type cover (see \ref{def: nice cover}). Then there is a natural functor from the category $\mathrm{Rep}(G)$ of finite-dimensional $G$-representations over $\IQ$ to the category of $G(\IA_f)$-equivariant weakly abelian-motivic systems of realizations over $\Sh(G, \Ohm)$.
\end{theoremA}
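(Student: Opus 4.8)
\emph{Strategy and the Hodge-type case.} The plan is to reduce to a Hodge-type situation along the given cover, build there a tautological abelian-motivic system out of a universal abelian scheme, and then descend. Let $(G_1, \Ohm_1)$ be the nice Hodge-type cover of \ref{def: nice cover}, and fix a faithful symplectic representation $V_1$ presenting $(G_1, \Ohm_1)$ inside a Siegel datum. Pulling back the universal polarized abelian scheme along the induced map to the Siegel Shimura variety and taking relative $H^1$ in every realization --- Betti, de Rham (with its Gauss--Manin connection and Hodge filtration), $\ell$-adic for all $\ell$, and crystalline at primes of good reduction --- produces the \emph{tautological} system of realizations $\bV$ over $\Sh(G_1, \Ohm_1)$; it is $G_1(\IA_f)$-equivariant and, essentially by construction, weakly abelian-motivic. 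For $W \in \mathrm{Rep}(G_1)$, write $W = \im(e)$ for an idempotent $e \in \End_{G_1}(T)$ with $T := \bigoplus_i V_1^{\otimes a_i} \otimes (V_1^\vee)^{\otimes b_i}$; the essential point is that $e$ induces an endomorphism of the system $\bV_T := \bigoplus_i \bV^{\otimes a_i} \otimes (\bV^\vee)^{\otimes b_i}$. In the Betti and $\ell$-adic realizations this is part of the $G_1(\IA_f)$-equivariant structure (morphisms of $G_1$-representations correspond to Hecke-equivariant morphisms of automorphic local systems); in the de Rham realization it holds because Hodge cycles on abelian varieties are absolutely Hodge (Deligne); and in the crystalline realizations because such cycles are moreover de Rham in the $p$-adic sense. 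Hence $\bV(W) := \im(e \mid \bV_T)$ is a well-defined $G_1(\IA_f)$-equivariant weakly abelian-motivic system of realizations, and since a $G_1$-morphism $W \to W'$ is again a matrix of invariant tensors, $W \mapsto \bV(W)$ is a tensor functor on $\mathrm{Rep}(G_1)$. Here the $\IQ$-rational weight hypothesis is what makes the weight grading of each $W$ defined over $\IQ$, so that the output is a genuine (polarizable) system of realizations rather than a merely filtered object.

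\emph{Descent to $(G, \Ohm)$.} By niceness of the cover, the morphism of Shimura data $(G_1, \Ohm_1) \to (G, \Ohm)$ comes from a surjection $G_1 \onto G$, so pullback embeds $\mathrm{Rep}(G)$ as a tensor subcategory of $\mathrm{Rep}(G_1)$, while the induced map $\pi \colon \Sh(G_1, \Ohm_1) \to \Sh(G, \Ohm)$ presents $\Sh(G, \Ohm)$ --- together with its $G(\IA_f)$-action and its canonical model over $E$ --- as a quotient of $\Sh(G_1, \Ohm_1)$ along which systems of realizations descend; at the level of canonical models this is exactly Deligne's construction in the abelian-type case. Given $W \in \mathrm{Rep}(G)$ with pullback $W_1 \in \mathrm{Rep}(G_1)$, the subgroup $\ker(G_1 \onto G)$ acts trivially on $W_1$, hence acts trivially on $\bV(W_1)$ through the equivariant structure, and this is precisely the descent datum needed to push $\bV(W_1)$ down along $\pi$ to a system of realizations $\bV(W)$ over $\Sh(G, \Ohm)$. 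This $\bV(W)$ is $G(\IA_f)$-equivariant; its Betti and $\ell$-adic realizations are simply the automorphic local system and lisse sheaf attached to $W$ on the tower (so those descents are automatic), while its de Rham and crystalline realizations, and all comparison isomorphisms, descend by fpqc descent along $\pi$; it remains weakly abelian-motivic because that property is local on the cover. Functoriality and compatibility with tensor products are inherited from the first step. When the cover is not literally surjective one proceeds similarly, pulling back $W|_{G^{\mathrm{der}}}$ along the central isogeny $G_1^{\mathrm{der}} \onto G^{\mathrm{der}}$, realizing it as a subquotient of tensor powers of $V_1|_{G_1^{\mathrm{der}}}$, taking the descent datum from the $G^{\mathrm{ad}}(\IQ)^+$-equivariance of Deligne's construction, and supplying the $G^{\mathrm{ab}}$-contribution separately via the classical system of realizations of the CM motive attached to $(G^{\mathrm{ab}}, \Ohm^{\mathrm{ab}})$. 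Finally, independence of all auxiliary choices (the cover, the symplectic embedding, the lattices) follows from rigidity: a system of realizations over the finite-type $E$-scheme $\Sh_\sfK(G, \Ohm)$ is determined up to unique isomorphism by its Betti and $\ell$-adic realizations, the latter (living over $E$) pinning down the de Rham descent.

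\emph{Where the difficulty lies.} The crux is the descent in the second step. Because a representation of $G$ touches the Hodge-type cover only through $G_1 \onto G$, one must verify that Deligne's construction genuinely supplies, for $\pi$, a descent datum compatible at once with all four realizations and with the Galois action over the reflex field $E$ --- i.e.\ that the de Rham bundle-with-connection obtained over $E$ by descent is the correct one, simultaneously matching the Hodge filtration over $\IC$ and the $p$-adic comparison with the \'etale realization. In particular one has to come down from the reflex field of the cover --- which only contains $E$ --- to $E$ itself, which is where the $\Gal(\overline{E}/E)$-equivariance of the \'etale realization does the work. One must also check independence of the chosen nice cover, by passing to a common refinement of two covers and invoking the uniqueness above. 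Both standing hypotheses enter here essentially: the absence of an $\IR$-split anisotropic subtorus in the center of $G$ is exactly what makes Deligne's construction --- hence the whole passage between $\Sh(G_1, \Ohm_1)$ and $\Sh(G, \Ohm)$ --- available, while the $\IQ$-rational weight hypothesis is what lets the weight filtration, the polarizations, and the motivic bookkeeping descend along with everything else.
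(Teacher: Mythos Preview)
Your treatment of the Hodge-type case is essentially the paper's (Lemma~\ref{thm: autVH Hodge type}): cut the tautological system on $\Sh(\wt{G})$ by an idempotent coming from a $\wt{G}$-invariant projector, with Deligne's theorem supplying absolute Hodgeness. The genuine gap is in the descent step.

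First, a misreading of the hypotheses. Under the definition of a nice Hodge-type cover, $\wt{G} \to G$ \emph{is} surjective and the reflex fields \emph{are} equal (conditions (ii) and (iii) of \ref{def: nice cover}); your paragraph beginning ``When the cover is not literally surjective'' and your remark about having to come down from a larger reflex field are addressing difficulties that the standing assumption has already removed. The actual non-surjectivity lies elsewhere: even though $\wt{G} \onto G$, the map of Hermitian domains $\wt{\Ohm} \to \Ohm$ need only hit some of the connected components of $\Ohm$, so $\pi\colon \Sh_{\wt{\sfK}}(\wt{G}) \to \Sh_\sfK(G)$ is a finite \'etale cover \emph{of its image}, which in general is a proper union of components. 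Your descent argument, which treats $\pi$ as presenting all of $\Sh(G)$ as a quotient, therefore only produces $\sfV$ on $\mathrm{im}(\pi)$. The paper fills in the remaining components by composing $\pi$ with Hecke translates $T_g$ for finitely many $g \in G(\IA_f)$, using the $G(\IA_f)$-equivariance of the automorphic sheaves and uniqueness (\ref{cor: uniqueness of VB descent}) to glue on overlaps; this step is missing from your outline.

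Second, the descent mechanism itself. You assert that triviality of the $\ker(\wt{G} \to G)$-action on $W$ furnishes the descent datum along $\pi$, with ``fpqc descent'' doing the rest. But at finite level the deck group of $\Sh_{\wt{\sfK}}(\wt{G}) \to \mathrm{im}(\pi)$ is not $\ker(\wt{G} \to G)$, and it is not clear how your remark produces a cocycle for $\wt{\sfV}_\dR$ defined over the reflex field $E$ rather than merely over $\IC$. The paper's route is more concrete and sidesteps this: the automorphic \'etale local system $\sfV_\et$ is constructed \emph{directly} on $\Sh_\sfK(G)$ (from the tower of covers $\Sh_{\sfK_n}(G) \to \Sh_\sfK(G)$, not by descent from $\wt{G}$), and the automorphic VHS $(\sfV_B, \sfV_{\dR,\IC})$ already lives on $\Sh_\sfK(G)_\IC$. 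Thus the only thing requiring descent is the filtered flat bundle $\sfV_{\dR,\IC}$ from $\IC$ down to $E$, and Lemma~\ref{prop: engine of desceding VdR} shows that the existing \'etale descent forces the de Rham one via the weakly-AM condition established on the Hodge-type cover. (As a side note, crystalline realizations are not part of this paper's notion of a system of realizations.)
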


Roughly speaking, a system of realizations is a cohomological avatar of a ``genuine'' family of motives. For a subfield $k \subseteq \IC$ and a smooth $k$-variety $S$, a system of realizations is given by a variation of Hodge structures (VHS) over $S_\IC$ whose de rham and \'etale components descend to $S$ (see \ref{def: system of realizations}). The advantage of working with systems of realizations, as opposed to ``genuine'' families of motives, is that the former is defined purely in terms of sheaves so that its pullback and descent properties are well understood (see also \ref{rmk: compare with motives}). The only price to pay is that we need to explicitly keep track of a compatability condition between the descent data of the \'etale and de Rham realizations of the VHS in a system. This is why we introduce the notion of \textit{weakly abelian-motivic} systems of realizations (see \ref{def: compatible with AM}), which are those whose descent data are controlled by abelian motives. It is a pointwise definition and may look a little odd at first glance, but we hope the reader will find it actually very flexible to work with once one gets used to it (see \ref{sec: motivation for weakly AM}). 

\begin{theoremB}
    Let $(G, \Ohm)$ be as in Theorem~A and $\sfK$ be a neat compact open subgroup. Then for every faithful $G$-representation $V$ and smooth $E$-variety $S$, there is a natural bijection between $\Sh_\sfK(G, \Ohm)(S)$ and isomorphism classes of pairs $(\sfW, [\xi])$ such that $\sfW$ is a weakly abelian-motivic system of realizations over $S$, and $[\xi]$ is a $\sfK$-level structure which is $\sfW$-rational and is of type $\Ohm$.
\end{theoremB}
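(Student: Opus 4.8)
The plan is to build the desired bijection out of the functor of Theorem~A and then verify bijectivity, the only substantial point being a descent statement which I would reduce to the Hodge-type case by means of the nice cover. First I would construct the forward map. Applying Theorem~A to the faithful representation $V$ (write $\rho_V\colon G\to\GL(V)$ for the action) gives a $G(\IA_f)$-equivariant weakly abelian-motivic system of realizations $\sfW^{\univ}$ over $\Sh(G,\Ohm)$; since $\sfK$ is neat, descent to level $\sfK$ yields a weakly abelian-motivic system $\sfW^{\univ}_\sfK$ over $\Sh_\sfK(G,\Ohm)$ carrying a tautological $\sfK$-level structure $[\xi^{\univ}]$, and unwinding the construction on the uniformization $\Ohm\times G(\IA_f)$ — where the Betti realization is $V$ with Hodge structure $\rho_V\circ h$ and the level structure is the class of $g$ — shows that $[\xi^{\univ}]$ is $\sfW^{\univ}_\sfK$-rational and of type $\Ohm$. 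For a smooth $E$-variety $S$ I would then send $s\in\Sh_\sfK(G,\Ohm)(S)$ to the pair $(s^{*}\sfW^{\univ}_\sfK,\,s^{*}[\xi^{\univ}])$; this is well defined because being weakly abelian-motivic is a pointwise condition and hence stable under pullback, $\sfW$-rationality and type $\Ohm$ pull back, and the assignment is visibly natural in $S$. It then remains to show this map is bijective on isomorphism classes.

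For injectivity I would pass to $\IC$-points: as $S$ is reduced and $\Sh_\sfK(G,\Ohm)$ is separated over $E$, two $S$-points with isomorphic images agree once they agree on $\IC$-points, so it suffices to see that $[h,g\sfK]\mapsto(V,\rho_V\circ h,g\sfK)$ is injective on isomorphism classes. An isomorphism of two such pairs is in particular a $\IQ$-linear automorphism $\gamma$ of $V$ with $\gamma\cdot h_{1}=h_{2}$ sending $g_{1}\sfK$ to $g_{2}\sfK$; since $V$ is faithful and the level structures are of type $\Ohm$, which amounts to a reduction of the structure group from $\GL(V)$ to $G$ that $\gamma$ must preserve, one gets $\gamma\in G(\IQ)$ and hence $[h_{1},g_{1}\sfK]=[h_{2},g_{2}\sfK]$. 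For surjectivity, given $(\sfW,[\xi])$ over $S$ I would first produce a morphism over $\IC$: the complexification $\sfW_\IC$ is a polarizable VHS, and fibrewise $\sfW$-rationality of type $\Ohm$ identifies $\sfW_{\B,x}$ with $V$ so that its Hodge structure is $\rho_V\circ h_{x}$ for a unique $h_{x}\in\Ohm$, while $[\xi]$ yields a class $g_{x}\in G(\IA_f)/\sfK$, the pair $(h_{x},g_{x})$ being well defined up to $G(\IQ)$. The map $x\mapsto[h_{x},g_{x}\sfK]$ is holomorphic ($h_{x}$ is the period map of $\sfW_\IC$, and $g_{x}$ is a monodromy-equivariant, hence locally constant, section of the local system of level structures), so by Borel's algebraicity theorem it comes from a morphism $s_\IC\colon S_\IC\to\Sh_\sfK(G,\Ohm)_\IC$ with $s_\IC^{*}(\sfW^{\univ}_\sfK,[\xi^{\univ}])_\IC\cong(\sfW_\IC,[\xi]_\IC)$.

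The key step — which I expect to be the main obstacle — is to descend $s_\IC$ to a morphism over $E$; granting this, the resulting $S$-point has the same complexification and the same \'etale and de Rham realizations over $S$ as $(\sfW,[\xi])$, so the two pairs are isomorphic and surjectivity follows. To descend I would check ${}^{\sigma}s_\IC=s_\IC$ for every $\sigma\in\Aut(\IC/E)$: the morphism ${}^{\sigma}s_\IC$ is attached by the construction above to the pair $({}^{\sigma}\sfW_\IC,{}^{\sigma}[\xi]_\IC)$, and since $\sfW_{\et}$ and $\sfW_{\dR}$ are defined over the $E$-variety $S$, conjugation affects only the Betti realization and its comparisons with the \'etale realization. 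The very point of introducing weakly abelian-motivic systems (\ref{def: compatible with AM}) is that, pointwise, this descent datum is the one controlled by an abelian motive, so that Deligne's theorem that Hodge classes on abelian varieties are absolutely Hodge forces the point of $\Sh_\sfK(G,\Ohm)$ associated to $({}^{\sigma}\sfW_\IC,{}^{\sigma}[\xi]_\IC)$ to coincide with the one associated to $(\sfW_\IC,[\xi]_\IC)$, i.e. ${}^{\sigma}s_\IC=s_\IC$.

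To turn this heuristic into a proof I would reduce to the Hodge-type case by means of the nice Hodge-type cover $(G_{1},\Ohm_{1})\to(G,\Ohm)$ of \ref{def: nice cover}: after a suitable finite \'etale base change $S'\to S$ the pair $(\sfW,[\xi])$ transfers to a weakly abelian-motivic pair for $(G_{1},\Ohm_{1})$, and — the system being weakly abelian-motivic — this pair underlies up to isogeny a polarized abelian scheme over $S'$ equipped with the Hodge tensors cutting out $G_{1}$ inside $\GL(V_{1})$ for the symplectic representation $V_{1}$ (these are the prescribed Hodge classes over $S'_\IC$ by $\sfW$-rationality, and they are defined over $S'$ because the system is weakly abelian-motivic, via absolute Hodge); hence one obtains an $S'$-point of the Hodge-type Shimura variety $\Sh_{\sfK_{1}}(G_{1},\Ohm_{1})$ through the modular description available there. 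Deligne's construction of $\Sh(G,\Ohm)$ out of $\Sh(G_{1},\Ohm_{1})$ — a quotient by a finite group followed by a twist by a torsor under a torus, all defined over $E$ — together with the given $\sfK$-level structure then lets me descend this $S'$-point to the desired $S$-point of $\Sh_\sfK(G,\Ohm)$. The real difficulty, and where I expect the technical weight of the argument to concentrate, is making this passage through the cover compatible on the nose: checking that the abstract system $\sfW$, its level structure, and the analytic period map $s_\IC$ all match Deligne's explicit construction of the canonical model — equivalently, that being weakly abelian-motivic of type $\Ohm$ over $S/E$ is exactly the condition that makes $s_\IC$ algebraic over $E$.
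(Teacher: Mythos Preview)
Your overall skeleton (forward map by pulling back the universal pair, complex period map via \ref{thm: moduli over C}, then Galois descent of the morphism) is the same as the paper's. The divergence is entirely in how you propose to carry out the descent step, and there your plan has a real gap.

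You propose to descend $s_\IC$ by passing through the nice Hodge-type cover: after an \'etale base change $S'\to S$, transfer $(\sfW,[\xi])$ to a pair for $(G_1,\Ohm_1)$, and then argue that ``this pair underlies up to isogeny a polarized abelian scheme over $S'$'' because the system is weakly abelian-motivic. This inference is not valid. Weakly abelian-motivic (\ref{def: compatible with AM}) is a purely \emph{pointwise} condition: at each $s\in S(\IC)$ one is given some $M_s\in\Mot_\Ab(\IC)$ whose Hodge realization matches the fiber, together with a compatibility of $\gamma$ and $\gamma^\sigma$. Nothing in the definition forces these $M_s$ to fit into a family, let alone into (the cohomology of) an abelian scheme over $S'$. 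That is precisely why the paper distinguishes $\sfR^\circ_\am$, $\sfR_\am$, and $\sfR^*_\am$ (cf.\ \ref{rmk: strongly AM}). So the reduction you sketch does not get off the ground, and the subsequent appeal to the Hodge-type moduli description is unsupported. (Separately, the ``transfer'' of $(\sfW,[\xi])$ to a pair for $(G_1,\Ohm_1)$ already needs more than you say, since $V$ is a $G$-representation, not a $G_1$-representation.)

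The paper avoids all of this. The point you are missing is that the nice Hodge-type cover has already been used once and for all in Theorem~A to show that the \emph{universal} system $\sfV$ on $\Sh_\sfK(G)$ is itself weakly abelian-motivic. Once you know that, the descent of $\rho_\IC$ is a short pointwise argument using the weakly AM condition \emph{on both sides}: choose a test object $(M,\gamma)$ at $t$ for $\sfW$, transport it via the fiberwise isomorphism $\nu_t$ to a test object $(M,\nu_t^{-1}\circ\gamma)$ at $x=\rho_\IC(t)$ for $\sfV$, and compare the two instances of the diagrams in \ref{def: compatible with AM}. This yields $(\sfV_{\sigma(x)},[\eta_V]_{\sigma(x)})\cong(\w_\Hdg(M^\sigma),[\xi_M]^\sigma)\cong(\sfW_{\sigma(t)},[\xi]_{\sigma(t)})$, whence $\rho_\IC(\sigma(t))=\sigma(x)$ by the complex moduli description. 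No cover, no abelian scheme over $S$, no Deligne-style reconstruction of the canonical model is needed here.

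Finally, you gloss over the last step: after $\rho_\IC$ descends to $\rho$, one still has to promote the isomorphism $(\sfW,[\xi])|_{S_\IC}\cong\rho^*(\sfV,[\eta_V])|_{S_\IC}$ to one over $S$. Having ``the same \'etale and de Rham realizations over $S$'' is not the same as having an isomorphism of systems over $S$; the paper handles this with the rigidity lemma \ref{lem: rigidity}, using that neat $\sfK$-level structures of type $\Ohm$ are rigid (\ref{rmk: rigid}).
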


A more precise version is stated in \ref{thm: moduli over reflex field}. The meaning of ``$\sfW$-rational'' and ``of type $\Ohm$'' are explained in \S\ref{sec: level str on systems}, and the motivation for these definitions is explained in \ref{rmk: rational level structure}. The moduli interpretation of the complex Shimura variety $\Sh_\sfK(G, \Ohm)_\IC$ in terms of VHS equipped with additional structures (\cite[Prop.~3.10]{Mil94}) follows relatively easily from the complex uniformization $G(\IQ) \backslash \Ohm \times G(\IA_f) / \sfK$ and standard results due to Baily and Borel. This note is mainly focused on the canonical model $\Sh_\sfK(G, \Ohm)$ over the reflex field. Morally of course Theorem B should follow from \cite[Thm~3.31]{Mil94}. Unfortunately, some proofs in \textit{loc. cit.} seem to be only sketched, and there are some technical subtleties about the notion of a family of motives used there that should be further explained (see also \ref{rmk: compare with motives}).

The author hopes that the notion of weakly abelian-motivic systems of realizations set up in this note gives a convenient alternative formalism for one to fill in the small gaps in literature. Our general approach here is inspired by \cite[\S2.2]{KisinInt} and \cite[\S5.24]{CSpin}. We expect that the simplifying assumption of the existence of a ``nice Hodge-type cover'' \ref{def: nice cover} can be removed by suitably adapting some constructions in \cite{KisinInt}, \cite{Lovering}, and \cite{Milne:CanonicalModels}, if needed. 

\paragraph{Acknowledgment} We thank Ben Moonen for encouraging the author to write this note and Paul Hamacher for his comments. 

\paragraph{Notations} 

Let $K$ be a field and $L$ be an extension of $K$. For any $K$-scheme $X$, we write $X(L)$ for the set of $K$-linear morphisms $\Spec(L) \to X$. 

%For a $K$-module, $K$-scheme or (when $\mathrm{char\,} K = 0$) $K$-motive $X$ we use $X^\tau$ to denote $X\tensor_\tau L$. We note that the canonical projection $p_\tau\colon X^\tau \to X$ induces a bijection of $L$-points $X^\tau(L) \cong X(L)$. For $x \in X(L)$ we denote by $x_\tau \in X^\tau(L)$ the corresponding $L$-point.

By a variety over a field, we shall always mean a geometrically reduced separated scheme of finite type over the field. If $X$ is a smooth projective variety over $\IC$, we write $\H^i_B(X, \IQ)$ for the $i$th Betti (i.e., singular) cohomology and $\H^i(X, \IQ)$ for the associated Hodge structure.

Let $R$ be a commutative ring and $M$ be a finite free $R$-module. Let $M^\tensor$ denote the direct sum of all $R$-modules that can be formed from $M$ using the operations of taking duals, tensor products, symmetric powers and exterior powers. Whenever applicable, we use these notations also for sheaves of modules in some Grothendieck topology which is understood in the context. 

%For natural numbers $a, b$, $M^{\tensor (a, b)} = M^{\tensor a} \tensor (M^\vee)^{\tensor b}$.

%We view a quadratic form $q$ on $M$ simultaneously as a symmetric bilinear pairing via the usual formula $q(v, w) = q(v + w) - q(v) - q(w)$.

If $S$ is a smooth variety over $\IC$ and $\IW$ is a VHS (variation of Hodge structures) over $S$, we often use $\IW_B$ to denote the underlying $\IQ$-local system and $\IW_\dR$ the filtered flat vector bundle over $S$. We will always assume that the VHS is \textit{pure}, i.e., the weight filtration is split. 

Let $S$ be a scheme. We write $\mathsf{Loc}(S)$ for the category of \'etale local systems of $\IA_f$-modules and $\mathsf{Vect}(S)$ for the category of vector bundles on $S$. If $S$ is a smooth $\IC$-variety and $\sV \in \mathsf{Vect}(S)$, write $\sV^\an$ for the associated complex-analytic vector bundle. For a field-valued point $s : \Spec(\kappa) \to S$, and $\sF \in \Loc(S)$ (resp. $\sV \in \mathsf{Vect}(S)$), write $\sF_s$ for $s^* \sF$ (resp. $\sV_s$ for $s^* \sV = \sV \tensor_{\sO_S} \kappa$).

\section{Preliminaries}
\label{Sec: Prelim}
\subsection{Generalities on Galois Descent}
\label{sec: gen Gal Des}
We will make heavy use of Galois descent arguments, so we first spell out some tautology and set up some notations to facilitate discussion. We do not intend to state the results in utmost generality, but only the simple cases we need. 

Let $k$ be a field of characteristic $0$ and $L$ is an algebraically closed field over $k$. We allow $L$ to have infinite transcendence degree over $k$, e.g., $L = \IC$ and $k$ is a number field. Let $X_0$ be a smooth $k$-variety. Let $X$ be the base change $X_0 \tensor_k L$. Note that there is a canonical identification $X(L) = X_0(L)$. Given a $k$-linear morphism $s : \Spec(L) \to X_0$ (i.e., an element of $X_0(L)$) and $\sigma \in \Aut(L/k)$, let $\sigma(s)$ be the composition $\Spec(L) \stackrel{\Spec(\sigma)}{\to} \Spec(L) \stackrel{s}{\to} X_0$. Then $s \mapsto \sigma(s)$ defines the $\Aut(L/k)$-action on $X_0(L)$, and thus on $X(L)$ via the identificaton $X(L) = X_0(L)$ above. 

%Indeed, there is a canonical identification of $X(L)$ with $X_0(L)$, the set of $k$-linear morphisms $\Spec(L) \to X_0$, given by pre-composing with $\Spec(\sigma)$ for every $\sigma \in \Aut(L/k)$. We write $\sigma(x)$ for $\sigma \cdot x$. 

\subsubsection{} \label{sec: sheaves descent iso} Suppose now that $\sF \in \Loc(X)$. A descent of $\sF$ to $X_0$ is a pair $(\sF_0 \in \Loc(X_0), \alpha : \sF_0|_{X} \iso \sF)$. For every $x \in X(L)$ and $\sigma \in \Aut(L/k)$, the descent $(\sF_0, \alpha)$ gives us an isomorphism of stalks $\sF_{x} \sto \sF_{\sigma(x)}$, which we label as $\sigma_{\sF_0, x}$. Indeed, when we write $\sF_x$ and $\sF_{\sigma(x)}$, we are viewing $x$ as an element of $X(L)$, but via the identification $X(L) = X_0(L)$ we also have identifications $\sF_x = \sF_{0, x}$ and $\sF_{\sigma(x)} = \sF_{0, \sigma(x)}$. By applying \cite[04FN]{stacks-project} to $(X_0, x, \sigma(x))$, we obtain a canonical isomorphism $\sF_{0, x} \sto \sF_{0, \sigma(x)}$. This gives $\sigma_{\sF_0, x}$. If $x_0 \in X_0$ is the image of the geometric point $x$, then $x = \sigma(x)$ for $\sigma \in \Aut(k(x)/k(x_0)) = \Aut(L/k(x_0))$. The collection $\{ \sigma_{\sF_0, x} \}_{\Aut(L/k(x_0))}$ gives precisely the $\Aut(L/k(x_0))$-action on the stalk $\sF_{0, x}$ (\cite[03QW]{stacks-project}).

Similarly, for $\sV \in \mathsf{Vect}(X)$, a descent of $\sV$ to $X_0$ is a pair $(\sV_0 \in \mathsf{Vect}(X_0), \beta : \sV_0 |_X \iso \sV)$, which gives at each $x \in X(L)$ a $\sigma$-linear isomorphism $\sigma_{\sV_0, x} : \sV_x \sto \sV_{\sigma(x)}$. A global section $\bs \in \H^0(X, \sF)$ (resp. $\bt \in \H^0(X, \sV)$) descends to $\H^0(X_0, \sF_0)$ (resp. $\H^0(X_0, \sV_0)$) if and only if for every $\sigma \in \Aut(L/k)$ and $x \in X(L)$, $\sigma_{\sF_0, x}(\bs_x) = \bs_{\sigma(x)}$ (resp. $\sigma_{\sV_0, x}(\bt_x) = \bt_{\sigma(x)}$). 

%We note that if $\eta \in X_0$ is the generic point of the scheme-theoretic image of $x$, and $\sigma \in \Aut(L/ k(\eta))$, then $x = \sigma(x)$ and $\{ d(\sF_0)_{\sigma(x)} \}_{\sigma \in \Aut(L/ k(\eta))}$ is nothing but the $\Aut(L/k(\eta))$-action on the stalk $\sF_x$.

We often omit the identification $\alpha$ from the notation, and simply say that $\sF_0$ is a descent of $\sF$. However, we emphasize that as an object of $\Loc(X_0)$, $\sF_0$ may certainly have nontrivial automorphisms, but there is no nontrivial automorphism, say $g \in \Aut(\sF_0)$, such that $\alpha \circ (g|_{X} : \sF_0|_X \sto \sF_0|_X) = \alpha$. That is, as a descent of $\sF$, $\sF_0$ cannot have nontrivial automorphisms. A similar remark applies to descent of vector bundles. Therefore, if we say that two descents of a fixed object in $\Loc(X)$ or $\mathsf{Vect}(X)$ to $X_0$ are isomorphic, then they are \textit{uniquely isomorphic}. 

\subsubsection{} \label{sec: simultaneous descent} Let $X, X_0, \sF, (\sF_0, \alpha : \sF_0|_X \iso \sF)$ be as above. Suppose that $X_0$ is a finite \'etale cover of another smooth $k$-variety $Y_0$ with $Y := Y_0 \tensor_k L$, and $\sF$ admits a descent $(\sG \in \Loc(Y), \gamma : \sG|_{X} \iso \sF)$. A \textbf{simultaneous descent} of $(\sF_0, \sG)$ to $Y_0$ is a triple $(\sG_0 \in \Loc(Y_0), \xi : \sG_0|_{X_0} \iso \sF_0, \zeta : \sG_0|_Y \iso \sG)$ such that the diagram on the right below commutes: 

\[\begin{tikzcd}
	X & {X_0} & \sF & {\sF_0|_X} \\
	Y & {Y_0} & {\sG|_X} & {\sG_0|_X}
	\arrow[from=1-1, to=1-2]
	\arrow[from=1-1, to=2-1]
	\arrow[from=2-1, to=2-2]
	\arrow[from=1-2, to=2-2]
	\arrow["\lrcorner"{anchor=center, pos=0.125}, draw=none, from=1-1, to=2-2]
	\arrow["\alpha"', from=1-4, to=1-3]
	\arrow["\gamma", from=2-3, to=1-3]
	\arrow["{\zeta|_X}"', from=2-4, to=2-3]
	\arrow["{\xi|_X}"', from=2-4, to=1-4]
\end{tikzcd}\]

It is not hard to see that such a triple $(\sG_0, \xi, \zeta)$ is \textit{unique up to unique isomorphism} provided that it exists. Similarly, we define the notion of simultaneous descent of vector bundles. 

\subsection{Motives and Absolute Hodge Cycles}
Let $k$ be a field of characteristic $0$. We denote by $\Mot_\AH(k)$ the neutral $\IQ$-linear Tannakian category of motives over $k$ for absolute Hodge cycles (cf. \cite[\S2]{Pan94} where it is denoted by $\shM_k$). We have the Tate objects $\mathbf{1}(n)$ for every $n \in \IZ$ in this category. For any object $M \in \Mot_\AH(k)$, we write $M(n)$ for $M \tensor \mathbf{1}(n)$ and by an absolute Hodge class on $M$ we mean a morphism $\mathbf{1} \to M$.

%We recall that if $k$ is algebraically closed and has finite transcendence degree over $\IQ$ and $X$ is a smooth projective variety over $k$, then a pair $(s_\dR, s_\et) \in \H^{2m}_\dR(X/k) \times \H^{2m}_\et(X, \IA_f)$ ($m$ is a natural number) is said to be absolute Hodge if for every embedding $\sigma : k \into \IC$, there exists a Betti class $s_{B, \sigma} \in \H^{2m}_B(X_{\sigma}(\IC), \IQ)$ of Hodge type $(m, m)$ which is compatible with $(s_\dR, s_\et)$ via the de Rham and Artin comparison isomorphisms. The definition can be extended to an arbitrary field $k$ of characteristic $0$ (see \cite[p.35, 36]{LNM900}). In particular, if $k$ has finite transcendence degre

%When $k = \IC$, we also say that a Betti class $s_B 
%\in \H^{2m}_B(X, \IQ)$ is absolute Hodge if its image in $\H^{2m}_\dR(X/k) \times \H^{2m}_\et(X, \IA_f)$ is absolute Hodge. 

%Roughly speaking, the category $\Mot_\AH(k)$ is defined in the same way as $\Mot(k)$ except that motivated correspondences are replaced by absolute Hodge correspondences. As motivated cycles are absolute Hodge, there is a natural functor $\Mot(k) \to \Mot_\AH(k)$. 

Following \cite[\S1]{MPTate}, we denote by $\w_\ell$ the $\ell$-adic realization functor which sends $\Mot_\AH(k)$ to the category of finite dimensional $\IQ_\ell$-vector spaces with an action of $\Gal_k := \Gal(\bar{k}/k)$, where $\bar{k}$ is some chosen algebraic closure of $k$. Putting $\w_\ell$'s together we obtain $\w_\et$, which takes values in the category of finite free $\IA_f$-modules with a $\Gal_k$-action. Let $\w_\dR$ denote the de Rham realization functor, which takes values in the category of filtered $k$-vector spaces. If $k = \IC$, we additionally consider the Betti realization $\w_B$ (resp. the Hodge realization $\w_\Hdg$) which takes values in the category of $\IQ$-vector spaces (resp. Hodge structures). 

%Under the above notations, for $M \in \Mot_\AH(k)$, an absolute Hodge cycle $s$ on $M$ has a de Rham realization $s_\dR \in \w_\dR(M)$ and an \'etale realization $s_\et \in \w_\et(M)$; for every $\tau : k \into \IC$, $s$ in addition has a Betti 

Since the operations of taking duals, tensor products, and symmetric and exterior powers make sense for the Tannakian category $\Mot_\AH(k)$ and are compatible with the cohomological realizations, it makes sense to say whether a tensor $s \in (\w_\et(M) \times \w_\dR(M))^\tensor$ (resp. $s \in \w_B(M)^\tensor$) is absolute Hodge for $M \in \Mot_\AH(k)$ (resp. $M \in \Mot_\AH(\IC)$, when $k = \IC$).

Let $\Mot_\Ab(k) \subseteq \Mot_\AH(k)$ be the full Tannakian sub-category generated by the Artin motives and the motives attached to abelian varieties.
As in \textit{loc. cit.}, we will repeatedly make use of the following fact: 

\begin{theorem}
\emph{(\cite[Ch~1]{LNM900})}
\label{thm: tensor on AV always AH}
     The functor $\w_\Hdg$ is fully faithful when restricted to $\Mot_\Ab(\IC)$. In particular, for every $M \in \Mot_\Ab(\IC)$, every element $s \in \w_B(M) \cap \Fil^0 \w_\dR(M)$ is given by an absolute Hodge cycle.  
\end{theorem}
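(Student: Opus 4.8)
The plan is to reduce everything, via the Tannakian formalism, to Deligne's theorem that Hodge cycles on abelian varieties are absolute Hodge; moreover the two assertions in the statement are equivalent, so I will dispatch faithfulness directly and then concentrate on fullness. Faithfulness is formal: $\Mot_\Ab(\IC)$ is a $\IQ$-linear Tannakian category on which the Betti realization $\w_B$ is a fibre functor, hence faithful and exact, and $\w_\Hdg$ is $\w_B$ together with the extra datum of the Hodge structure; so $\w_\Hdg(f) = 0$ forces $\w_B(f) = 0$ and therefore $f = 0$.

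For fullness, recall that $\Hom_{\Mot_\AH(\IC)}(M, N) = \Hom_{\Mot_\AH(\IC)}(\mathbf{1}, M^\vee \tensor N)$, which by the very construction of $\Mot_\AH$ is the set of absolute Hodge classes in the realization of $M^\vee \tensor N$; on the other side, a morphism of Hodge structures $\w_\Hdg(M) \to \w_\Hdg(N)$ is the same as a Hodge class on $\w_\Hdg(M^\vee \tensor N)$, i.e.\ an element of $\w_B(M^\vee \tensor N) \cap \Fil^0 \w_\dR(M^\vee \tensor N)$. As $\Mot_\Ab(\IC)$ is stable under duals and tensor products, fullness of $\w_\Hdg$ on $\Mot_\Ab(\IC)$ is thus \emph{equivalent} to the second assertion of the theorem, that every Hodge class on an object of $\Mot_\Ab(\IC)$ is absolute Hodge; so it suffices to prove the latter. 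Now over $\IC$ the Artin motives are trivial, so $\Mot_\Ab(\IC)$ is generated as a Tannakian category by the motives of abelian varieties; hence any $P \in \Mot_\Ab(\IC)$ is a subquotient, and by polarizability (whence semisimplicity) a direct summand, of $\H^*(A)^\tensor$ for a suitable abelian variety $A$. A Hodge class on $P$ then induces one on $\H^*(A)^\tensor$, and since absolute Hodge classes are preserved by the resulting projection $\H^*(A)^\tensor \onto P$, it is enough to prove the theorem for abelian varieties: every Hodge class on $A/\IC$ is absolute Hodge.

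This last statement is Deligne's theorem (\cite[Ch.~1]{LNM900}), whose proof I would only sketch: (i) \emph{Principle B}, that in a connected smooth family $\pi\colon \shA \to S$ of abelian varieties a flat family of fibrewise Hodge classes which is absolute Hodge on one fibre is absolute Hodge on every fibre; (ii) the fact that divisor classes are absolute Hodge (Lefschetz $(1,1)$, since they are algebraic), so one may work modulo the subring they generate; (iii) the structure theorem reducing the remaining Hodge classes to Weil classes on abelian varieties with complex multiplication, with non-split Weil classes reduced to split ones after a finite base change and an application of Principle B, and split Weil classes seen to come, after an isogeny, from products of divisor classes; (iv) deforming an arbitrary pair $(A, s)$ within its Mumford--Tate family to a CM point, where (ii)--(iii) apply, and descending back along Principle B. The formal reductions in the previous paragraphs are routine; the genuine content, and the main obstacle, is Deligne's structure theorem (iii) together with the computation of (split) Weil classes, and for this I would refer to \emph{loc.\ cit.} rather than reproduce it.
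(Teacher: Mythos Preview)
The paper does not give its own proof of this theorem; it is stated as a known result with a citation to Deligne's article in \cite[Ch.~1]{LNM900}. Your proposal is correct: the formal Tannakian reductions (faithfulness from $\w_B$ being a fibre functor; fullness equivalent to ``Hodge $\Rightarrow$ absolute Hodge'' via $\Hom(M,N)=\Hom(\mathbf{1},M^\vee\tensor N)$; reduction to abelian varieties by semisimplicity and the triviality of Artin motives over $\IC$) are standard, and your sketch of Deligne's argument (Principle~B, divisor classes, Weil classes on CM abelian varieties, deformation to CM points) is an accurate outline of the cited reference. So your write-up supplies exactly what the paper omits by citation; there is nothing to compare against.
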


We often refer to objects in $\Mot_\Ab(k)$ as abelian motives. 

\subsubsection{} \label{sec: twist tensors} If $L$ is an extension of $k$, then there is a natural faithful functor of Tannakian categories $- \tensor_k L : \Mot_\AH(k) \to \Mot_\AH(L)$ compatible with fiber functors. A construction we shall often consider is the following: Let $M$ be an object of $\Mot_\AH(\IC)$ and $\sigma \in \Aut(\IC)$. We denote $M \tensor_\sigma \IC$ by $M^\sigma$. Base change properties of \'etale and de Rham cohomology give us canonical isomorphisms $\w_\et(M) \iso_\bc \w_\et(M^\sigma)$ and $\w_\dR(M) \iso_\bc \w_\dR(M^\sigma)$. Note that the latter is $\sigma$-linear. Here the supscript ``bc'' stands for ``base change'', and we add this supscript when we need to distinguish these isomorphisms from other potentially different isomorhisms (e.g., in most Galois descent arguments). If $s \in \w_B(M)^\tensor$ is absolute Hodge, then we write $s^\sigma$ for the (necessarily unique) element in $\w_B(M^\sigma)^\tensor$ such that $s$ and $s^\sigma$ have the same \'etale and de Rham realizations via the isomorphisms $\w_\et(M) \iso_\bc \w_\et(M^\sigma)$ and $\w_\dR(M) \iso_\bc \w_\dR(M^\sigma)$. 

%The above construction globalizes: Suppose that $S$ is a smooth $\IC$-variety and $Y \to S$ is a smooth projective morphism. Let $s$ be a global section of $\H^i_B(Y/S, \IQ)^\tensor$ ($i$ is some natural number) which is absolute Hodge at every $\IC$-point. Let $\sigma \in \Aut(\IC)$ and $p_\sigma : S^\sigma \to S$ be the canonical projection. Then there exists a necessarily unique section $s^\sigma$ of $\H^i_B(Y^\sigma / S^\sigma, \IQ)^\tensor$ whose \'etale and de Rham realizations are compatible with those of $s$ via the canonical isomorphisms $\H^i_\et(Y^\sigma / S^\sigma, \IA_f) \iso_\bc p_\sigma^* \H^i_\et(Y/ S, \IA_f)$ and $\H^i_\dR(Y^\sigma / S^\sigma) \iso_\bc p_\sigma^* \H^i_\dR(Y/ S)$ given by the base change properties of cohomology. 

\subsection{Tensors and Level Structures}
%\subsubsection{} Let $I$ be a set and $M, N$ be two finite free $R$-modules of the same rank. Let $\{s_\alpha\}_{\alpha \in I}$ (resp. $\{ t_\alpha\}_{\alpha \in I}$) be a set of tensors in $M^\tensor$ (resp. $N^\tensor$) index by $I$. By an isomorphism $(M, \{ s_\alpha \}) \sto (N,\{t_\alpha\})$ we mean an isomorphism $M \sto N$ of $R$-modules that sends each $s_\alpha$ to $t_\alpha$. We write $\Isom((M, \{ s_\alpha \}), (N,\{t_\alpha\}))$ for the set of such isomorphisms. If $R'$ is an $R$-algebra, we often write $(M, \{ s_\alpha \}) \tensor_R R'$ for $(M \tensor_R R', \{ s_\alpha \tensor 1 \})$. 

\subsubsection{} \label{sec: level structure bc}
Let $G$ be a connected reductive group over $\IQ$, $V$ be a finite dimensional $G$-representation over $\IQ$ and $\sfK \subseteq G(\IA_f)$ be a compact open subgroup. Let $T$ be a Noetherian $\IQ$-scheme and $\sfW_\et \in \Loc(T)$. A \textbf{$(G, V, \sfK)$-level structure}, or simply a $\sfK$-level structure when $(G, V)$ is understood, on $\sfW_\et$ is the data $[\eta]$ that give each geometric point $t \to T$ a $\pi_1^\et(T, t)$-invariant element $[\eta]_t \in \sfK \backslash \mathrm{Isom}(V \tensor \IA_f, \sfW_{\et, t})$, such that if $t, t'$ are two geometric points on the same connected component of $T$, parallel transport along any \'etale path from $t$ to $t'$ sends $[\eta]_t$ to $[\eta]_{t'}$. Alternatively, one can also consider the pro-\'etale sheaf $\underline{\Isom}(V\tensor \IA_f, \sfW_\et)$ consisting of isomorphims $V\tensor \IA_f \sto \sfW_\et$ over some pro-\'etale cover and define a $\sfK$-level structure to be a global section of $\sfK \backslash \underline{\Isom}(V\tensor \IA_f, \sfW_\et)$.\footnote{When the action of $\sfK$ on $V \tensor \IA_f$ is not faithful, this quotient is interpreted as taking the orbit of the action by the image of $\sfK$ in $\GL(V \tensor \IA_f)$. We are not considering a ``stacky'' quotient.}

If $T' \to T$ is a morphism of Noetherian $\IQ$-schemes, we naturally construct the \textbf{pullback} of $[\eta]$ to $T'$, which is a $\sfK$-level structure on $\sfW_\et|_{T'}$. We denote it by $[\eta]_{T'}$. If $M \in \Mot_\AH(\IC)$ is a motive, and $[\eta_M]$ is a $\sfK$-level structure on $\w_\et(M)$, then $[\eta_M]^\sigma$ denotes $\sfK$-level structure on $\w_\et(M^\sigma)$ induced by composing (a representative of) $[\eta_M]$ with $\w_\et(M) \iso_\bc \w_\et(M^\sigma)$. 

\subsubsection{} \label{sec: define eta(v)} We note that if a $\sfK$-level structure $[\eta]$ can be put on $\sfW_\et$, then for each $v \in (V^\tensor)^G$ (i.e., the $G$-invariants in $V^\tensor$), there exists a unique section $\bv_\et \in \H^0(\sfW_\et^\tensor)$ such that for every geometric point $t \to T$, some (and hence every) representative of $[\eta]_t$ sends $v \tensor 1$ to $\bv_{\et, t}$. We write this global section as $\eta(v)_\et$, since it is completely determined by $\eta$ and $v$. 

We will often make use of the following simple observation: Let $k$ and $L$ be as in (\ref{sec: gen Gal Des}), and suppose that $T$ is a smooth $k$-variety. A $\sfK$-level structure $[\eta]$ on $\sfW_\et |_{T_L}$ descends to a level structure on $\sfW_\et$ if and only if for every $t \in T(L) = T_L(L)$ and $\sigma \in \Aut(L/k)$, $\sigma_{\sfW_\et, t}$ sends $[\eta]_t$ to $[\eta]_{\sigma(t)}$. 

\section{Systems of Realizations}
\subsection{Basic Definitions}
\begin{definition}
\label{def: system of realizations}
    Let $k$ be a subfield of $\IC$ and $S$ be a smooth $k$-variety. By a system of realizations we mean a tuple $\sfV = (\sfV_B, \sfV_\dR, \sfV_\et, i_\dR, i_\et)$ where 
    \begin{itemize}
        \item $\sfV_B$ is a $\IQ$-local system over $S_\IC := S \tensor_k \IC$; 
        \item $\sfV_\dR$ is a filtered flat vector bundle over $S$; 
        \item $\sfV_\et$ is an \'etale local system of $\IA_f$-coefficients over $S$; 
        \item $i_\dR$ is an isomorphism of flat vector bundles $(\sfV_B \tensor \sO^\an_{S_\IC}, \mathrm{id} \tensor d) \sto (\sfV_\dR|_{S_\IC})^\an$ such that $(\sfV_B, \sfV_\dR|_{S_\IC})$ forms a pure \textit{polarizable} VHS;
        \item $i_\et$ is an isomorphism $\sfV_B  \tensor \IA_f \sto \sfV_\et|_{S_\IC}$.
    \end{itemize}  
\end{definition}
We may often omit $(i_\dR, i_\et)$ in the notation and simply write $\sfV_B \tensor \IA_f = \sfV_\et|_{S_\IC}$ and $\sfV_B \tensor \sO_{S_\IC}^\an = \sfV_\dR|_{S_\IC}$. There are some changes of topology here: Technically, $\sfV_B \tensor \IA_f$ is defined for the analytic topology, but it has a canonically associated \'etale local system, which we also denote by $\sfV_B \tensor \IA_f$. Similarly, we often do not distinguish $\sfV_\dR|_{S_\IC}$ with its analytification $\sfV_B \tensor \sO^\an_{S_\IC}$, because over a smooth complex variety there is a natural equivalence of categories between flat algebraic vector bundles with regular singularieties and flat holomorphic vector bundles (\cite[Thm~II.5.9]{DelVB}). In particular, a flat global section of $\sfV_B \tensor \sO_{S_\IC}^\an$ is also a flat global section of $\sfV_{\dR}|_{S_\IC}$. 

\subsubsection{} \label{sec: pullback systems} If $k'$ is another subfield of $\IC$ which contains $k$, $S'$ is a $k'$-variety, and there is a $k$-linear morphism of schemes $S' \to S$, then we can naturally form the restriction $\sfV|_{S'}$ (or pullback) of $\sfV \in \sfR(S)$ to $S'$. Indeed, $S' \tensor_{k'} \IC$ is the same as $S \tensor_k \IC$, so the Betti component $\sfV_B$ stays the same. Then we simply restrict $\sfV_\dR$ and $\sfV_\et$ to $S'$. If $k = \IC$, we do not distinguish $\sfR(S)$ with the category of polarizable VHS, as these two categories the naturally equivalent. 

\subsubsection{} \label{sec: compare definitions}
Our definition looks slightly different from \cite[Def.~6.1]{FuMoonen} (see also \cite[\S3.41]{Mil94}). However, if $K$ is a number field (which is no longer equipped with a chosen embedding into $\IC$) and $S$ is a smooth $K$-variety, then a system of realizations over $S$ as defined in \cite[Def.~6.1]{FuMoonen} is equivalent to a system of realizations over $S_{(\IQ)}$ in our definition, where $S_{(\IQ)}$ is the $\IQ$-variety obtained by composing the structural morphism of $S$ with $\Spec(K) \to \Spec(\IQ)$.  

\subsubsection{} \label{sec: global section of realizations} With the obvious definitions of morphisms, tensor products and duals, systems of realizations over a smooth variety $S$ over $k \subseteq \IC$ form a $\IQ$-linear Tannakian category $\sfR(S)$. We have Tate objects $\mathbf{1}(n)$ and denote $\sfV(n)$ by $\sfV \tensor \mathbf{1}(n)$ (cf. the discussion below \textit{loc. cit.}). We write $\H^0(\sfV)$ for the $\IQ$-module $\Hom(\mathbf{1}, \sfV)$. Concretely, an element in $\H^0(\sfV)$ is given by a triple $(\bs_B, \bs_\dR, \bs_\et)$ where $\bs_B \in \H^0(\sfV_B)$ is a global section which is everywhere of Hodge type $(0, 0)$, and $\bs_{\dR}$ and $\bs_\et$ are respectively global sections in $\H^0(\sfV_\dR)$ and $\H^0(\sfV_\et)$ whose restrictions over $S_\IC$ are the de Rham and \'etale realizations of $\bs_B$. 

%We will use  $(\oplus_{n \in \IZ} \sfV(n))^\tensor$ to denote the direct sum of all objects in $\sfR(S)$ that can be obtained from $\sfV$ by taking duals, tensor products, exterior and symmetric powers, as well as Tate twists. Technically, $(\oplus_{n \in \IZ} \sfV(n))^\tensor$ is not an object in $\sfR(S)$ because it is not of finite rank, but it still makes an obvious sense to talk about $\H^0((\oplus_{n \in \IZ} \sfV(n))^\tensor)$, as every element eventually lies in a finite direct sum of objects in $\sfR(S)$. 

%We write $\sfV^\tensor$ for the direct sum of all systems that can arise from taking duals, tensor products, symmetric and exterior products, as well as Tate twists. By a global section of $\sfV^\tensor$ we mean a 

\begin{remark}
\label{rmk: compare with motives}
    We introduce systems of realizations mainly to circumvent the problem of not having a definition of a family of motives in literature that has been documented to have satisfactory pullback and descent properties. For example, if one uses \cite[Def.~2.37]{Mil94}, then it is not a priori clear how to perform pullback along non-dominant morphisms, though when we restrict to considering abelian motives over smooth $\IC$-varieties this definition has nice descent properties (e.g., combine Lem.~2.33 and Prop.~2.42 in \textit{loc. cit.}). On the other hand, \cite[Def.~4.3.3]{Moonen-Fom} is amenable to defining pullback, but its descent properties have not been discussed.
\end{remark}

%One technical difficulty one runs into when trying to make such a statement precise is that in order to phrase a moduli problem, we would need a notion of a family of motives that has nice pullback and descent properties. Say $k$  Then if $f : T \to S$ is a morphism between smooth $k$-varieties, there should be a natural pullback functor $f^* : \Mot(S) \to \Mot(T)$; and if $f$ satisfies some nice properties (\'etale, flat, etc), we hope that an object $M \in \Mot(T)$ equipped with appropriate descent data induces an object $N \in \Mot(S)$ such that $f^* N \iso M$. 

\subsection{Characterizing ``abelian-motivic'' systems}

\begin{definition}
\label{def: strongly AM}
    Let $S$ be a smooth variety over $k \subseteq \IC$. We say that a system of realizations $\sfV$ over $S$ is \textbf{strongly abelian motivic (strongly AM)} if there exists another system $\sfH$ which is given by the first cohomology of an abelian scheme $\sA / S$, a realization $\sfH^\natural$ that is a direct sum of those constructed out of $\sfH$ by taking Tate twists, duals, tensor products, exterior and symmetric powers, as well as an idempotent projector $\bpi = (\bpi_B, \bpi_\dR, \bpi_\et) : \sfH^\natural \to \sfH^\natural$ such that $\sfV$ is isomorphic to the image of $\bpi$. We write the full subcategory of $\sfR(S)$ consisting of such objects as $\sfR^\circ_{\am}(S)$. 
    
    We say that $\sfV$ is \textbf{\'etale-locally} strongly AM, if for some finite \'etale cover $S' \to S$ of $k$-varieties, the restriction $\sfV|_{S'}$ is strongly AM. We write the full subcategory of $\sfR(S)$ consisting of such objects as $\sfR_{\am}(S)$. 
\end{definition}

\begin{remark}
\label{rmk: strongly AM}
      In \cite{Mil94}, there is also a similar notion of an abelian motivic VHS (Def.~2.30), and a notion of an abelian motive on a smooth connected variety over a field of characteristic $0$ (Def.~2.37). To distinguish with Milne's definitions, we added ``strongly'' to indicate that we ask the relevant abelian scheme $\sA$ to be defined over the entire $S$, as opposed to just an open dense subset of $S$. One reason for doing so is that being strongly AM is clearly stable under base change along any morphisms between smooth $k$-varieties, rather than just the dominant ones (see also \ref{rmk: compare with motives}). %Another reason is just we want a notion of ``being abelian motivic'' to be a property which (at least) satisfy finite \'etale descent: 
\end{remark}

%The price to pay, however, is that it is not clear whether a descent of a strongly AM system of realizations along an \'etale cover is still strongly AM, in contrast to \cite[Lem.~2.33]{Mil94}. Therefore, we also defined the ad hoc notion of being ``\'etale-locally strongly AM'', which is good enough for almost all our considerations. 

The following definition is an extension of \ref{def: strongly AM}. 

\begin{definition}
\label{def: compatible with AM}
     We say that $\sfV$ is \textbf{weakly abelian-motivic (weakly AM)} if for every $s \in S(\IC)$, there exists $M \in \Mot_\Ab(\IC)$ such that $\w_{\Hdg}(M) \iso (\sfV_{B, s}, \sfV_{\dR, s})$; moreover, for any such isomorphism $\gamma : \w_{\Hdg}(M) \sto (\sfV_{B, s}, \sfV_{\dR, s})$ and $\sigma \in \Aut(\IC/k)$, there exists an isomorphism $\gamma^\sigma : \w_\Hdg(M^\sigma) \sto (\sfV_{B, \sigma(s)}, \sfV_{\dR, \sigma(s)})$ such that the Betti components $\gamma_{B}, \gamma^\sigma_{B}$ of $\gamma$ and $\gamma^\sigma$ fit into commutative diagrams:
    \begin{equation}
    \label{diag: defining AM compatible}
        \begin{tikzcd}
	{\w_{\et}(M) } & {\sfV_{B,s} \tensor \IA_f} \\
	{\w_{\et}(M^\sigma)} & {\sfV_{B, \sigma(s)} \tensor \IA_f}
	\arrow["{\iso_{\mathrm{bc}}}"', from=1-1, to=2-1]
	\arrow["{\sigma_{\sfV_\et, s}}", from=1-2, to=2-2]
	\arrow["{\gamma^\sigma_{B} \tensor \IA_f}", from=2-1, to=2-2]
	\arrow["{\gamma_{B} \tensor \IA_f}", from=1-1, to=1-2]
    \end{tikzcd}
    \text{ and }
    \begin{tikzcd}
	{\w_{\dR}(M) } & {\sfV_{B,s} \tensor \IC} \\
	{\w_{\dR}(M^\sigma)} & {\sfV_{B, \sigma(s)} \tensor \IC}
	\arrow["{\iso_{\mathrm{bc}}}"', from=1-1, to=2-1]
	\arrow["{\sigma_{\sfV_\dR, s}}", from=1-2, to=2-2]
	\arrow["{\gamma^\sigma_B \tensor \IC}", from=2-1, to=2-2]
	\arrow["{\gamma_B \tensor \IC}", from=1-1, to=1-2]
    \end{tikzcd}
    \end{equation} We note that $\gamma^\sigma$ is uniquely determined by $\gamma$ provided that it exists. For the notations $\sigma_{\sfV_\et, s}$, $\sigma_{\sfV_\dR, s}$ and ``$\iso_\bc$'' in the above diagrams, see \ref{sec: sheaves descent iso} and \ref{sec: twist tensors}. When we write $\sfV_{B, s}$ and $\sfV_{B, \sigma(s)}$, we are viewing $s$ and $\sigma(s)$ as (usually different) closed points on $S_\IC$, and to apply the notations $\sigma_{\sfV_\et, s}$ and $\sigma_{\sfV_\dR, s}$ we are viewing $\sfV_\et$ and $\sfV_\dR$ as descents of $\sfV_B \tensor \IA_f$ and $\sfV_B \tensor \sO^{\mathrm{an}}_S$ over $S$ respectively. 
    
    Denote the full subcategory of $\sfR(S)$ given by these objects by $\sfR^*_\am(S)$. 
\end{definition}

In the above definition, we say that the pair $(M, \gamma)$ is a ``test object'' at $s$ for $\sfV$ to be weakly AM, but its choice does not matter: 
\begin{lemma}
For a $\sfV \in \sfR(S)$ as in \ref{def: compatible with AM}, if the existence of $\gamma^\sigma$ is satisfied by one $(M, \gamma)$, then it is satisfied by any other pair $(M', \gamma')$.
\end{lemma}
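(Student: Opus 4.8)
The plan is to show that the cocycle-like condition defining "weakly AM" is independent of the test object by transporting everything through a single isomorphism. Suppose $(M,\gamma)$ satisfies the condition at $s$, and let $(M',\gamma')$ be another pair with $\gamma' : \w_\Hdg(M') \sto (\sfV_{B,s}, \sfV_{\dR,s})$. Then $\delta := (\gamma')^{-1} \circ \gamma : \w_\Hdg(M) \sto \w_\Hdg(M')$ is an isomorphism of Hodge realizations of abelian motives. By Theorem~\ref{thm: tensor on AV always AH} ($\w_\Hdg$ fully faithful on $\Mot_\Ab(\IC)$), $\delta$ is induced by a (unique) isomorphism $\underline{\delta} : M \sto M'$ in $\Mot_\Ab(\IC)$. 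Applying the base-change functor $-\tensor_\sigma \IC$ of \ref{sec: twist tensors} to $\underline{\delta}$ gives an isomorphism $\underline{\delta}^\sigma : M^\sigma \sto (M')^\sigma$, whose étale and de Rham realizations are intertwined with those of $\underline{\delta}$ via the canonical "base change" isomorphisms $\iso_\bc$ (this is precisely the functoriality of $\iso_\bc$, i.e. the compatibility of $\iso_\bc$ with morphisms of motives, which is part of its construction).

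Now define $(\gamma')^\sigma := \gamma^\sigma \circ \w_\Hdg(\underline{\delta}^\sigma)^{-1} : \w_\Hdg((M')^\sigma) \sto (\sfV_{B,\sigma(s)}, \sfV_{\dR,\sigma(s)})$, where $\gamma^\sigma$ is the isomorphism provided by the hypothesis on $(M,\gamma)$. It remains to check that this $(\gamma')^\sigma$ makes both squares in \eqref{diag: defining AM compatible} commute with $M$ replaced by $M'$ and $\gamma$ by $\gamma'$. For the étale square: the outer square for $(M',\gamma')$ decomposes as the étale square for $(M,\gamma)$ (which commutes by hypothesis) stacked against two "functoriality" squares — one relating $\w_\et(M)\iso_\bc \w_\et(M^\sigma)$ to $\w_\et(M')\iso_\bc\w_\et((M')^\sigma)$ via $\w_\et(\underline\delta)$ and $\w_\et(\underline\delta^\sigma)$ (commutes by functoriality of $\iso_\bc$), and one expressing $\gamma_B\tensor\IA_f = (\gamma'_B\tensor\IA_f)\circ(\w_\et(\underline\delta))$ and similarly for $\gamma^\sigma$ versus $(\gamma')^\sigma$ (commutes by the definitions of $\delta$ and $(\gamma')^\sigma$ together with $\w_\Hdg$ fully faithful, so that $\delta$ agrees on étale realizations with $\w_\et(\underline\delta)$). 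The de Rham square is handled identically, using the $\sigma$-linear isomorphism $\w_\dR(M)\iso_\bc \w_\dR(M^\sigma)$ and the fact that $\delta_\dR = \w_\dR(\underline\delta)$. Finally, the asserted uniqueness of $(\gamma')^\sigma$ is the uniqueness clause already noted in \ref{def: compatible with AM}.

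The only genuinely delicate point is the bookkeeping in the previous paragraph: one must be careful that the isomorphism $\delta = (\gamma')^{-1}\circ\gamma$, a priori just an isomorphism of pairs (Betti local system, filtered bundle) at the single point $s$, really does come from a morphism of motives — this is exactly where full faithfulness of $\w_\Hdg$ on $\Mot_\Ab(\IC)$ (Theorem~\ref{thm: tensor on AV always AH}) is essential, and it is also what forces $\delta$ to be compatible with the étale realizations (not just Betti and de Rham), which is what lets the étale square close. Once that is in hand, everything else is diagram-chasing: the two functoriality squares for $\iso_\bc$ and the two "change of test object" squares paste onto the hypothesized squares to give the squares for $(M',\gamma')$. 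I expect the main obstacle to be purely notational — keeping the $\sigma$-linearity of the de Rham $\iso_\bc$ straight and making sure the direction of each isomorphism is consistent — rather than mathematical.
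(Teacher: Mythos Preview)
Your proposal is correct and follows essentially the same approach as the paper. The only difference is cosmetic: where you lift $\delta$ to an isomorphism $\underline{\delta}$ in $\Mot_\Ab(\IC)$ and then base-change the motive, the paper simply says that $\psi := (\gamma')^{-1}\circ\gamma$ is absolute Hodge and invokes the $(-)^\sigma$ construction of \ref{sec: twist tensors}; both routes produce the same conjugate isomorphism, and both define $(\gamma')^\sigma = \gamma^\sigma \circ (\psi^\sigma)^{-1}$.
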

\begin{proof}
    Consider the isomorphism $\psi:= (\gamma')^{-1} \circ \gamma : \w_\Hdg(M) \sto \w_\Hdg(M')$. Since $M$ and $M'$ are both abelian motives (i.e., objects on $\Mot_\Ab(\IC)$), $\psi$ is absolute Hodge. Then for any $\sigma \in \Aut(\IC/k)$, $\psi$ induces an isomorphism $\psi^\sigma : \w_B(M^\sigma) \sto \w_B((M')^\sigma)$ such that for the Betti components $\psi_B$ and $\psi^\sigma_B$ the diagrams 
    \begin{equation}
     \begin{tikzcd}
	{\w_\et(M) } & {\w_{\et}(M')} \\
	{\w_\et(M^\sigma)} & {\w_{\et}((M')^\sigma)}
	\arrow["{\iso_{\mathrm{bc}}}"', from=1-1, to=2-1]
	\arrow["{\iso_{\mathrm{bc}}}", from=1-2, to=2-2]
	\arrow["{\psi_B^\sigma \tensor \IA_f}", from=2-1, to=2-2]
	\arrow["{\psi_B \tensor \IA_f}", from=1-1, to=1-2]
    \end{tikzcd}
    \text{ and }
    \begin{tikzcd}
	{\w_{\dR}(M)} & {\w_{\dR}(M')} \\
	{\w_{\dR}(M^\sigma)} & {\w_{\dR}((M')^\sigma)}
        \arrow["{\iso_{\mathrm{bc}}}"', from=1-1, to=2-1]
	\arrow["{\iso_{\mathrm{bc}}}", from=1-2, to=2-2]
        \arrow["{\psi_B^\sigma \tensor \IC}", from=2-1, to=2-2]
	\arrow["{\psi_B \tensor \IC}", from=1-1, to=1-2]
    \end{tikzcd}
    \end{equation} 
    commute. Therefore, the diagrams in (\ref{diag: defining AM compatible}) commute with $\gamma^\sigma$ replaced by $\gamma^\sigma \circ (\psi^\sigma)^{-1}$.
\end{proof}

\subsubsection{} \label{sec: motivation for weakly AM} Our motivation for defining $\sfR^*_\am(S)$ is that it is very flexible to work with. We list some simple observations which we will use repeatedly later: 

\begin{enumerate}[label=\upshape{(\roman*)}]
    \item $\sfR^*_\am(S)$ is a Tannakian subcategory of $\sfR(S)$. In particular, it is stable under taking Tate twists, tensor products, duals, symmetric and exterior products.
    \item Let $S'$ be another smooth $k$-variety admitting a morphism to $S$. If $\sfV \in \sfR^*_\am(S)$, then $\sfV|_{S'} \in \sfR^*_\am(S')$; if $S'(\IC) \to S(\IC)$ is surjective, then the converse is also true.
    \item $\sfR_\am(S)$ is contained in $\sfR^*_\am(S)$ (see \ref{prop: strong implies weak} below). 
\end{enumerate}

For readers' convenience, we spell out a  tautology on how the condition of being weakly abelian motivic interacts with global tensors and level structures: 

\subsubsection{} \label{sec: descent of one component} Let $S$ be as above and take $\sfV = (\sfV_B, \sfV_\dR, \sfV_\et) \in \sfR_\am^*(S)$. Let $\bv_B \in \H^0(\sfV^\tensor_B)$ be a global section which is everywhere of Hodge type $(0, 0)$. Let $\bv_{\et, \IC} \in \H^0(\sfV^\tensor_\et|_{S_\IC})$ and $\bv_{\dR, \IC} \in \H^0(\sfV^\tensor_\dR|_{S_\IC})$ be the \'etale and de Rham realizations of $\bv_B$ respectively. Let $s \in S(\IC)$, $\sigma \in \Aut(\IC/k)$ and $(M, \gamma)$ be a test object at $s$ for $\sfV$ to be weakly AM and remember the resulting $\gamma^\sigma$. Let $v_M := \gamma_{B}^{-1}(\bv_{B, s}) \in \w_B(M)^\tensor$. Then since $M$ is an abelian motive, $v_M$ is absolute Hodge. Therefore, it makes sense to form $v_M^\sigma$ (see \ref{sec: twist tensors}). By \eqref{diag: defining AM compatible} the following are equivalent: 
    \begin{enumerate}[label=\upshape{(\roman*)}]
        \item $\gamma^\sigma$ sends $v_M^\sigma$ to $\bv_{B, \sigma(s)}$. 
        \item $\gamma_B^\sigma \tensor \IA_f$ sends $v_M^\sigma \tensor 1$ to $\bv_{B, \sigma(s)} \tensor 1 \in \sfV^\tensor_{\et, \sigma(s)} = \sfV_{B, \sigma(s)}^\tensor \tensor \IA_f$. 
        \item $\gamma_B^\sigma \tensor \IC$ sends $v_M^\sigma \tensor 1$ to $\bv_{B, \sigma(s)} \tensor 1 \in \sfV^\tensor_{\dR, \sigma(s)} = \sfV^\tensor_{B, \sigma(s)} \tensor \IC$. 
    \end{enumerate}
    Using the tautological remarks in \ref{sec: sheaves descent iso}, one easily checks that (ii) (resp. (iii)) holds for every $s$ and $\sigma$ if and only if $\bv_{\et, \IC}$ (resp. $\bv_{\dR, \IC}$) descends to $S$. Therefore, using the equivalence of (ii) and (iii) with (i), we see that if one of $\bv_{\et, \IC}$ or $\bv_{\dR, \IC}$ descends to $S$, then so does the other. 
    
    Similarly, suppose that for some $(G, V, \sfK)$ in \ref{sec: level structure bc} we can put a $\sfK$-level structure $[\eta]$ on $\sfV_\et$. Again consider the $(M, \gamma, \gamma^\sigma)$ above. Let $[\eta_M]$ be the $\sfK$-level structure on $\w_\et(M)$ obtained by pulling back of $[\eta]_s$ along $\gamma_B \tensor \IA_f$. Then \ref{sec: define eta(v)} and the commutativity of the first diagram in \ref{def: compatible with AM} implies that $\gamma_B^\sigma \tensor \IA_f$ sends $[\eta_M]^\sigma$ to $[\eta]_{\sigma(s)}$, where $[\eta_M]^\sigma$ is the composition $V \tensor \IA_f \to \w_\et(M) \iso_\bc \w_\et(M^\sigma)$.\\

%\subsubsection{} \label{sec: test level} Similarly, suppose that for some $(G, V, \sfK)$ in \ref{sec: level structure bc} we can put a $\sfK$-level structure $[\eta]$ on $\sfV_\et$. Again consider the $(M, \gamma, \gamma^\sigma)$ above. Let $[\eta_M]$ be the $\sfK$-level structure on $\w_\et(M)$ by pulling back of $[\eta]_s$ along $\gamma$. Then $\gamma^\sigma$ sends $[\eta_M]^\sigma$ to $[\eta]_{\sigma(s)}$, where $[\eta_M]^\sigma$ is the composition $V \tensor \IA_f \to \w_\et(M) \iso_\bc \w_\et(M^\sigma)$ (see \ref{sec: level structure bc}). \\\\
\indent Now we return to show \ref{sec: motivation for weakly AM}(iii): 

\begin{proposition}
\label{prop: strong implies weak}
    Let $S$ be a smooth variety over a field $k \subseteq \IC$ and $\sfV$ be an object of $\sfR(S)$. If $\sfV \in \sfR_\am(S)$, then $\sfV \in \sfR^*_\am(S)$. 
\end{proposition}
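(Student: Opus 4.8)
The plan is to prove the equivalent inclusion $\sfR^\circ_\am(S) \subseteq \sfR^*_\am(S)$ and then descend. First I would reduce to the strongly AM case: given $\sfV \in \sfR_\am(S)$, choose a finite étale cover $\pi : S' \to S$ of $k$-varieties with $\sfV|_{S'}$ strongly AM; since $\pi$ is surjective, so is $S'(\IC) \to S(\IC)$, and therefore, by the converse half of observation (ii) in \ref{sec: motivation for weakly AM}, it is enough to know $\sfV|_{S'} \in \sfR^*_\am(S')$. Replacing $\sfV$ by an isomorphic object, I may thus assume $\sfV = \im(\bpi)$ for an idempotent $\bpi : \sfH^\natural \to \sfH^\natural$ in $\sfR(S)$, where $\sfH$ is the first cohomology $\H^1(\sA/S)$ of an abelian scheme $f : \sA \to S$ and $\sfH^\natural$ is obtained from $\sfH$ by the $\natural$-recipe of \ref{def: strongly AM}.

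Next, fixing $s \in S(\IC)$, I would produce a test object $(M,\gamma)$ at $s$. Set $N := h^1(\sA_s)^\natural \in \Mot_\Ab(\IC)$, obtained from the $\H^1$-motive of the fibre $\sA_s$ by the same $\natural$-recipe. Proper-smooth base change for $f$, the compatibility of the Betti/de Rham/\'etale comparisons with pullback, and the $\tensor$-functoriality of the realizations supply mutually compatible identifications $\w_B(N) = \sfH^\natural_{B,s}$, $\w_\dR(N) = \sfH^\natural_{\dR,s}$, $\w_\et(N) = \sfH^\natural_{\et,s}$. Since $\bpi$ is a morphism in $\sfR(S)$ it preserves the Hodge filtration, so $(\bpi_{B,s},\bpi_{\dR,s})$ is a morphism of Hodge structures on $\w_\Hdg(N)$, i.e.\ an element of $\w_B(\sEnd(N)) \cap \Fil^0 \w_\dR(\sEnd(N))$; by Theorem \ref{thm: tensor on AV always AH} it is an absolute Hodge cycle, hence is realized by a morphism $p : N \to N$ in $\Mot_\Ab(\IC)$ with $\w_B(p) = \bpi_{B,s}$, $\w_\dR(p) = \bpi_{\dR,s}$ and $\w_\et(p) = \bpi_{B,s} \tensor \IA_f = \bpi_{\et,s}$. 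Because $\w_\Hdg$ is faithful on $\Mot_\Ab(\IC)$ (Theorem \ref{thm: tensor on AV always AH}), $p$ is idempotent, so $M := \im(p) \in \Mot_\Ab(\IC)$. Passing to images of $p$ on the three realizations and using $\sfV = \im(\bpi)$ then gives a canonical identification $\gamma : \w_\Hdg(M) \sto (\sfV_{B,s},\sfV_{\dR,s})$ whose \'etale realization is the corresponding identification $\w_\et(M) = \sfV_{\et,s}$. By the lemma preceding the Proposition it suffices to check the condition of \ref{def: compatible with AM} for this single $(M,\gamma)$.

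Then, for each $\sigma \in \Aut(\IC/k)$, I would take $M^\sigma := \im(p^\sigma)$, where $p^\sigma : N^\sigma \to N^\sigma$ is the $\sigma$-conjugate of the idempotent $p$; as $- \tensor_\sigma \IC$ is an exact $\tensor$-functor preserving $\Mot_\Ab$, this $M^\sigma$ agrees with the base change $M \tensor_\sigma \IC$ and lies in $\Mot_\Ab(\IC)$, and $N^\sigma = h^1(\sA_{\sigma(s)})^\natural$ since $(\sA_s)^\sigma = \sA_{\sigma(s)}$ canonically (because $\sigma(s) = s \circ \Spec(\sigma)$). The crucial point — the one I expect to need the most care — is that, because $\sA$, $\sfH^\natural$, $\sfV$ and $\bpi$ are all defined over $S$, the tautological descent isomorphisms $\sigma_{\sfH^\natural_\et,s}$ and $\sigma_{\sfH^\natural_\dR,s}$ of \ref{sec: sheaves descent iso} coincide, under the identifications above, with the base-change comparisons $\iso_\bc$ of \ref{sec: twist tensors}; this is essentially the construction of the stalk isomorphism \cite[04FN]{stacks-project} via pullback functoriality. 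Granting it, one computes $\w_\dR(p^\sigma) = \sigma_{\sfH^\natural_\dR,s}\,\bpi_{\dR,s}\,\sigma_{\sfH^\natural_\dR,s}^{-1} = \bpi_{\dR,\sigma(s)}$ (the last equality because $\bpi_\dR$, being a morphism over $S$, intertwines the de Rham descent isomorphisms), and likewise $\w_\et(p^\sigma) = \bpi_{\et,\sigma(s)}$; since a morphism of Hodge structures is determined by its de Rham part, also $\w_B(p^\sigma) = \bpi_{B,\sigma(s)}$. Hence $\w_\Hdg(M^\sigma) = \im(\bpi_{B,\sigma(s)},\bpi_{\dR,\sigma(s)}) = (\sfV_{B,\sigma(s)},\sfV_{\dR,\sigma(s)})$, which yields the required $\gamma^\sigma$. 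Finally, applying functoriality of the stalk descent isomorphisms to the inclusion $\sfV = \im(\bpi) \hookrightarrow \sfH^\natural$ identifies $\sigma_{\sfV_\et,s}$ and $\sigma_{\sfV_\dR,s}$ with the restrictions of $\iso_\bc$ to the relevant images; since $\gamma$ and $\gamma^\sigma$ are the canonical identifications, both diagrams in \eqref{diag: defining AM compatible} then reduce to the tautology that the base-change map on $\w_\et(M)$ (resp.\ $\w_\dR(M)$) is the restriction of that on $\w_\et(N)$ (resp.\ $\w_\dR(N)$).

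To summarize: the only genuinely non-formal input is Theorem \ref{thm: tensor on AV always AH}, which upgrades the fibrewise projector $\bpi_s$ to an honest idempotent in $\Mot_\Ab(\IC)$; the remaining work is bookkeeping with $\tensor$-functors and exactness of base change, and the single delicate step is reconciling the abstract descent data $\sigma_{(-)}$ with the base-change comparisons $\iso_\bc$.
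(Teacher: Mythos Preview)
Your proposal is correct and follows essentially the same route as the paper: reduce to the strongly AM case via \ref{sec: motivation for weakly AM}(ii), build the test motive at $s$ as the image of the absolute Hodge idempotent $\bpi_{B,s}$ inside $\fh^1(\sA_s)^\natural$, and then use that $\bpi_\et,\bpi_\dR$ are global sections over $S$ together with the tautological compatibility between the sheaf-theoretic descent isomorphisms $\sigma_{\sfH_?, s}$ and the base-change comparisons $\iso_\bc$ to produce $\gamma^\sigma$. The only cosmetic difference is that the paper packages the last step through the geometric isomorphism $\sigma_{\sA,s}\colon (\sA_s)^\sigma \sto \sA_{\sigma(s)}$ and the induced motive isomorphism $M^\sigma \sto M_{\sigma(s)}$, whereas you argue directly on realizations; the ``delicate step'' you flag is exactly the content of the diagrams \eqref{diag: AV tautology}, which the paper likewise records as a tautology.
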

\begin{proof}
    First, we are allowed to assume that $\sfV$ is strongly abelian motivic (not just \'etale locally) thanks to \ref{sec: motivation for weakly AM}(ii). Let $\sA/S, \sfH, \sfH^\natural, \bpi$ be as in \ref{def: strongly AM}. Fix an arbitrary $s \in S(\IC)$ and $\sigma \in \Aut(\IC/k)$. Then the descent of the family $\sA_\IC / S_\IC$ to $\sA/S$ provides us with a canonical isomorphism $(\sA_{s})^\sigma := \sA_s \tensor_\sigma \IC \sto \sA_{\sigma(s)}$ of $\IC$-varieties. In analogy to \ref{sec: sheaves descent iso}, we denote it by $\sigma_{\sA, s}$. It tautologically fits into commutative diagrams 
    \begin{equation}
    \label{diag: AV tautology}
    \begin{tikzcd}
	{\H^1_\et(\sA_s, \IA_f)} & {\sfH_{\et, s}} \\
	{\H^1_\et((\sA_s)^\sigma, \IA_f)} & {\H^1_\et(\sA_{\sigma(s)}, \IA_f) = \sfH_{\et, \sigma(s)}}
	\arrow["{\iso_{\mathrm{bc}}}"', from=1-1, to=2-1]
	\arrow["\sigma_{\sA, s}", from=2-1, to=2-2]
	\arrow[equal, from=1-1, to=1-2]
	\arrow["{\sigma_{\sfH_\et, s}}", from=1-2, to=2-2]
    \end{tikzcd}
    \begin{tikzcd}
	{\H^1_\dR(\sA_s)} & {\sfH_{\dR, s}} \\
	{\H^1_\dR((\sA_s)^\sigma / \IC)} & {\H^1_\dR(\sA_{\sigma(s)}/\IC) = \sfH_{\dR, \sigma(s)}}
	\arrow["{\iso_{\mathrm{bc}}}"', from=1-1, to=2-1]
	\arrow["\sigma_{\sA, s}", from=2-1, to=2-2]
	\arrow[equal, from=1-1, to=1-2]
	\arrow["{\sigma_{\sfH_\dR, s}}", from=1-2, to=2-2]
    \end{tikzcd}
    \end{equation}
    We caution the reader that for the diagram on the right, the vertical arrows are $\sigma$-linear, whereas the horizontal ones are $\IC$-linear. 

     Fix an identification of $\sfV$ with $\mathrm{im}(\bpi)$ and denote by $T$ the tensorial construction such that $\sfH^\natural = T(\sfH)$. For every $x \in S(\IC)$, let $\fh^1(\sA_x) \in \Mot_\AH(\IC)$ be the motive whose Betti, \'etale and de Rham realisations are equal to the first Betti, \'etale and de Rham cohomology of $\sA_x$, respectively, and let $\fh^1(\sA_x)^\natural \coloneqq T(\fh^1(\sA_x))$. We denote by $M_x$ the submotive of $\fh^1(\sA_x)^\natural$ cut out by the absolute Hodge cycle $\bpi_{B, x}$, which is an idempotent projector on $\fh^1(\sA_x)^\natural$. Then we may identify $\w_\Hdg(M_x)$ with $(\sfV_{B, x}, \sfV_{\dR, x})$.
    
    %Fix an identification of $\sfV$ with $\mathrm{im}(\bpi)$. For every $x \in S(\IC)$, let $\fh^1(\sA_x)^\natural$ be the motive formed from $\fh^1(\sA_x)$ (i.e., the object in $\Mot_\AH(\IC)$ such that $\w_?(\fh^1(\sA_x)) = \H^1_?(\sA_x)$ for $? = B, \et, \dR$) in the same way that $\sfH^\natural$ is formed from $\sfH$ (e.g., if $\sfH^\natural = \sfH \tensor \sfH^\vee(2) \oplus \wedge^2 \sfH$, then $\fh^1(\sA_x) = \fh^1(\sA_x) \tensor\fh^1(\sA_x)^\vee(2) \oplus \wedge^2 \fh^1(\sA_x)$), and let $M_x$ be the submotive of $\fh^1(\sA_x)^\natural$ cut out by the absolute Hodge cycle $\bpi_{B, x}$, which is an idempotent projector on $\fh^1(\sA_x)^\natural$. Then we may identify $\w_\Hdg(M_x)$ with $(\sfV_{B, x}, \sfV_{\dR, x})$.
    
    We take $M = M_s$ and denote the identification $\w_\Hdg(M) = (\sfV_{B, s}, \sfV_{\dR, s})$ by $\gamma$. Then we use $(M, \gamma)$ as a test object at $s$ for $\sfV$ to be weakly AM. Note that $M^\sigma$ is canonically identified with the submotive of $T(\fh^1((\sA_s)^\sigma)) = T(\fh^1(\sA_x))^\sigma$ cut out by $\bpi^\sigma_{B, s}$. Now, since $\bpi_\et$ and $\bpi_\dR$ are global sections of $\End(\sfH^\natural_\et)$ and $\End(\sfH^\natural_\dR)$ respectively, $\sigma_{\sfH_\et, s}$ sends $\bpi_{\et, s}$ to $\bpi_{\et, \sigma(s)}$ and $\sigma_{\sfH_\dR, s}$ sends $\bpi_{\dR, s}$ to $\bpi_{\dR, \sigma(s)}$ (cf. \ref{sec: sheaves descent iso}). The diagrams in (\ref{diag: AV tautology}) hence tell us that $\sigma_{\sA, s}$ sends $\bpi^\sigma_{B, s}$ to $\bpi_{B, \sigma(s)}$. Therefore, we conclude that $\sigma_{\sA, s}$ induces an isomorphism $M^\sigma = (M_s)^\sigma \sto M_{\sigma(s)}$ of objects in $\Mot_\Ab(\IC)$. Denote this isomorphism by $\sigma_{M, s}$. Then the diagrams (\ref{diag: AV tautology}) induce commutative diagrams 
    \begin{equation}
    \label{diag: M tautology}
    \begin{tikzcd}
	{\w_\et(M)} & {\sfV_{\et, s}} \\
	{\w_\et(M^\sigma)} & {\w_\et(M_{\sigma(s)}) = \sfV_{\et, \sigma(s)}}
	\arrow["{\iso_{\mathrm{bc}}}"', from=1-1, to=2-1]
	\arrow["\sigma_{M, s}", from=2-1, to=2-2]
	\arrow[equal, from=1-1, to=1-2]
	\arrow["{\sigma_{\sfV_\et, s}}", from=1-2, to=2-2]
    \end{tikzcd}
    \begin{tikzcd}
	{\w_\dR(M)} & {\sfV_{\dR, s}} \\
	{\w_\dR(M^\sigma)} & {\w_\dR(M_{\sigma(s)}) = \sfV_{\dR, \sigma(s)}}
	\arrow["{\iso_{\mathrm{bc}}}"', from=1-1, to=2-1]
	\arrow["\sigma_{M, s}", from=2-1, to=2-2]
	\arrow[equal, from=1-1, to=1-2]
	\arrow["{\sigma_{\sfV_\dR, s}}", from=1-2, to=2-2]
    \end{tikzcd}
    \end{equation} 
    In orther words, the Hodge realization of $\sigma_{M, s}$ provides the isomorphism $\gamma^\sigma$ we are seeking for the test object $(M = M_s, \gamma)$ at $s$. 
\end{proof}

\subsection{Level Structures on Systems}\label{sec: level str on systems} Finally, we make some definitions regarding level structures on systems of realizations. The first two are not used until section~\ref{sec: Shimura}. Let $(G, V, \sfK)$ be as in \ref{sec: level structure bc}, and we recall the notations introduced there. Let $S$ be a smooth variety over $k \subseteq  \IC$ and $\sfV = (\sfV_B, \sfV_\dR, \sfV_\et) \in \sfR(S)$. Let $[\eta]$ be a $\sfK$-level structure on $\sfV_\et$. 

\begin{definition}
\label{def: level on sfV}
    We say that $[\eta]$ on $\sfV_\et$ is \textbf{$\sfV$-rational} if it satisfies the following property: For every $v \in I := (V^\tensor)^G$, the global section $\eta(v)_\et \in \H^0(\sfV_\et^\tensor)$ (see \ref{sec: define eta(v)}) is the \'etale component of a global section $\bv = (\bv_B, \bv_\dR, \bv_\et) \in \H^0(\sfV^\tensor)$ (i.e., $\eta(v)_\et = \bv_\et$). We denote $\bv$ by $\eta(v)$. Moreover, for every $s \in S(\IC)$, $(\sfV_{B, s}, \{ \eta(v)_{B, s}\}_{v \in I}) \iso (V, \{ v\}_{v \in I})$. 
\end{definition}

\begin{definition}
    \label{def: HS type} If $V$ is a faithful representation, $\Ohm$ is a $G(\IR)$-conjugacy class of morphisms $\IS \to G_\IR$, and $[\eta]$ is $\sfV$-rational, we say that $(\sfV, [\eta])$ is \textbf{of type $\Ohm$} if for every $s \in S(\IC)$, under some (and hence every) isomorphism $(\sfV_{B, s}, \{ \eta(v)_{B, s}\}_{v \in I}) \iso (V, \{ v\}_{v \in I})$ the Hodge structure on $\sfV_{B, s}$ is defined by an element of $\Ohm$. 
\end{definition}

\begin{definition}
\label{def: rigid}
    A $\sfK$-level structure $[\eta]$ on $\sfV_\et$ is said to be \textbf{rigid} on $\sfV$ if for any $s \in S(\IC)$, there is no nontrivial automorphism of $\sfV_{B, s}$ which preserves the Hodge filtration on $\sfV_{\dR, s}$ and the level structure $[\eta]_s$. 
\end{definition}

\subsection{Galois Descent Lemmas}

\begin{lemma}
\label{lem: descent of morphisms of systems} 
    Let $S$ be a smooth variety over a field $k \subseteq \IC$ and take $\sfV, \sfW \in \sfR_\am^*(S)$. Let $\varphi_\IC$ be a morphism $\sfV|_{S_\IC} \to \sfW|_{S_\IC}$. Then $\varphi_\IC$ descends to a morphism $\sfV \to \sfW$ if and only if either the \'etale or the de Rham component of $\varphi_\IC$ descends to $S$. 
\end{lemma}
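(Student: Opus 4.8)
The plan is to reduce everything to the pointwise criterion provided by \ref{sec: descent of one component}, applied to the graph of $\varphi_\IC$. First I would recall that a morphism $\sfV|_{S_\IC} \to \sfW|_{S_\IC}$ is the same datum as a global section of $\underline{\Hom}(\sfV, \sfW)|_{S_\IC} = (\sfV^\vee \tensor \sfW)|_{S_\IC}$ which is everywhere of Hodge type $(0,0)$ on the Betti component; concretely, $\varphi_\IC$ corresponds to a triple $(\varphi_B, \varphi_{\dR,\IC}, \varphi_{\et,\IC})$ where $\varphi_{\dR,\IC}$ and $\varphi_{\et,\IC}$ are the de Rham and \'etale realizations of $\varphi_B$ over $S_\IC$. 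By \ref{sec: motivation for weakly AM}(i), the system $\underline{\Hom}(\sfV, \sfW) = \sfV^\vee \tensor \sfW$ again lies in $\sfR^*_\am(S)$. A morphism $\sfV \to \sfW$ of systems over $S$ (restricting to $\varphi_\IC$ over $S_\IC$) is then precisely a global section of $\H^0(\underline{\Hom}(\sfV, \sfW))$ whose Betti component is $\varphi_B$; equivalently, $\varphi_\IC$ descends to $S$ if and only if both $\varphi_{\dR,\IC}$ and $\varphi_{\et,\IC}$ descend to global sections over $S$ (note that $\varphi_B$ itself is already defined over $S_\IC$, so ``descends'' here refers only to the de Rham and \'etale realizations as descent data for the descents $(\sfV^\vee \tensor \sfW)_\dR$ and $(\sfV^\vee \tensor \sfW)_\et$ of $(\sfV^\vee_B \tensor \sfW_B) \tensor \sO^\an_{S_\IC}$ and $(\sfV^\vee_B \tensor \sfW_B) \tensor \IA_f$ respectively).

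With this reformulation, the ``only if'' direction is immediate: if $\varphi_\IC$ descends to a morphism $\sfV \to \sfW$, then by definition both its \'etale and de Rham components descend. For the ``if'' direction, suppose (without loss of generality) that $\varphi_{\et,\IC}$ descends to $S$. Set $\sfU := \sfV^\vee \tensor \sfW \in \sfR^*_\am(S)$, and let $\bv_B := \varphi_B \in \H^0(\sfU_B)$ be the corresponding Betti global section of Hodge type $(0,0)$, with \'etale and de Rham realizations $\bv_{\et,\IC}$ and $\bv_{\dR,\IC}$ over $S_\IC$. We are assuming $\bv_{\et,\IC}$ descends to $S$, and we want to conclude $\bv_{\dR,\IC}$ descends to $S$ as well; then the resulting triple $(\bv_B, \bv_\dR, \bv_\et)$ is an element of $\H^0(\sfU)$ whose underlying morphism of systems restricts to $\varphi_\IC$ over $S_\IC$. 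This is exactly the last assertion of \ref{sec: descent of one component}: for $\sfU \in \sfR^*_\am(S)$ and a Betti global section of type $(0,0)$, if one of the \'etale or de Rham realizations over $S_\IC$ descends to $S$, then so does the other. The symmetric argument handles the case where it is the de Rham component that is assumed to descend.

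The one point that deserves care, and which I expect to be the main (minor) obstacle, is checking that the descended global section of $\sfU_\et$ (resp. $\sfU_\dR$) is genuinely the \'etale (resp. de Rham) component of $\varphi_\IC$ as a \emph{morphism} rather than merely a global tensor, i.e.\ that the identification of $\Hom(\sfV,\sfW)$ with $\H^0(\sfV^\vee \tensor \sfW)$ and its various realizations is compatible with the flat/filtered structures and with the descent data $\sigma_{\sfV_\et, s}, \sigma_{\sfW_\et, s}$, etc. This is a routine unwinding: the descent data on $\sfU_\et = \sfV_\et^\vee \tensor \sfW_\et$ and $\sfU_\dR = \sfV_\dR^\vee \tensor \sfW_\dR$ are the tensor-product descent data, so $\sigma_{\sfU_\et, s}$ acts on $\Hom(\sfV_{\et,s}, \sfW_{\et,s})$ by $f \mapsto \sigma_{\sfW_\et,s} \circ f \circ \sigma_{\sfV_\et,s}^{-1}$, and the condition ``$\sigma_{\sfU_\et,s}$ fixes $\bv_{\et, \sigma(s)} = \varphi_{\et,\sigma(s)}$'' unwinds to the statement that $\varphi_{\et}$ is compatible with the given descents of $\sfV_\et$ and $\sfW_\et$ — which is precisely what it means for $\varphi_{\et,\IC}$ to descend to a morphism over $S$. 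Granting this bookkeeping, the argument above is complete; note also that the de Rham descended section automatically respects the Hodge filtrations since this can be checked over $S_\IC$, where it holds because $\varphi_\IC$ is a morphism of VHS.
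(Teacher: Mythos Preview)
Your proposal is correct and follows exactly the paper's own argument: view $\varphi_\IC$ as a global section of $(\sfV^\vee \tensor \sfW)|_{S_\IC}$, note that $\sfV^\vee \tensor \sfW \in \sfR^*_\am(S)$, and invoke \ref{sec: descent of one component} to see that descent of one component forces descent of the other. The paper's proof is a two-line version of what you wrote; your additional bookkeeping about compatibility of the tensor-product descent data with the ``morphism'' interpretation is correct and simply unpacks what the paper leaves implicit.
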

\begin{proof}
    Considering $\varphi_\IC$ as a global section of $(\sfV^\vee \tensor \sfW)|_{S_\IC}$, we know that either its \'etale or de Rham component descent to $S$. By \ref{sec: descent of one component} this implies that $\varphi_\IC$ itself descends to $S$.
    %View $\varphi_\IC$ as a global section of $(\sfV^\vee \tensor \sfW)|_{S_\IC}$ and apply the observation in \ref{sec: descent of one component}. 
\end{proof}

\begin{corollary}
\label{cor: uniqueness of VB descent}
    Let $S$ be a smooth variety over a field $k \subseteq \IC$. Suppose that $(\sfV_B, \sfV_{\dR, \IC})$ is a VHS over $S_\IC$ equipped with a descent $\sfV_\et$ of $\sfV_B \tensor \IA_f$ to $S$. If there exists a descent $(\sfV_{\dR}, \varphi : \sfV_{\dR}|_{S_\IC} \iso \sfV_{\dR, \IC})$ of $\sfV_{\dR, \IC}$ as a filtered flat vector bundle over $S$ such that $\sfV := (\sfV_B, \sfV_\dR, \sfV_\et) \in \sfR(S)$ belongs to the subcategory $\sfR^*_\am(S)$, then the descent $(\sfV_\dR, \varphi)$ is unique up to unique isomorphism. 
\end{corollary}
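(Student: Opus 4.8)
The plan is to reduce the uniqueness assertion to Lemma~\ref{lem: descent of morphisms of systems}, applied to the tautological comparison morphism between the two competing descents. So suppose $(\sfV_\dR, \varphi : \sfV_\dR|_{S_\IC} \iso \sfV_{\dR,\IC})$ and $(\sfV'_\dR, \varphi' : \sfV'_\dR|_{S_\IC} \iso \sfV_{\dR,\IC})$ are two descents of $\sfV_{\dR,\IC}$ as filtered flat vector bundles over $S$ such that the resulting systems $\sfV = (\sfV_B, \sfV_\dR, \sfV_\et)$ and $\sfV' = (\sfV_B, \sfV'_\dR, \sfV_\et)$ both lie in $\sfR^*_\am(S)$. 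Since $\sfV$ and $\sfV'$ have the same Betti component, the same \'etale component, and their de Rham components are identified with the common $\sfV_{\dR,\IC}$ via $\varphi$ and $\varphi'$, the triple consisting of the identity on $\sfV_B$, the identity on $\sfV_\et|_{S_\IC}$, and $(\varphi')^{-1}\circ\varphi : \sfV_\dR|_{S_\IC} \iso \sfV'_\dR|_{S_\IC}$ is a morphism of systems $\psi_\IC : \sfV|_{S_\IC} \to \sfV'|_{S_\IC}$; its compatibility with the comparison isomorphisms $i_\dR$, $i_\et$ of $\sfV$ and $\sfV'$ is immediate from the fact that both are built from the same underlying VHS $(\sfV_B, \sfV_{\dR,\IC})$. (Here I use, as in \ref{def: system of realizations}, that over a smooth complex variety an algebraic filtered flat bundle with regular singularities is recovered from its analytification, so that $\varphi$ and $\varphi'$ really do identify the de Rham data with the VHS data.)

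Now apply Lemma~\ref{lem: descent of morphisms of systems} to $\psi_\IC$: since $\sfV, \sfV' \in \sfR^*_\am(S)$, it descends to a morphism $\sfV \to \sfV'$ over $S$ as soon as its \'etale or its de Rham component does. But the \'etale component of $\psi_\IC$ is the identity on $\sfV_\et|_{S_\IC}$, which obviously descends to the identity on $\sfV_\et$. Hence $\psi_\IC$ descends to a morphism $\psi : \sfV \to \sfV'$ over $S$, and since $S_\IC \to S$ is faithfully flat and $\psi_\IC$ is an isomorphism, $\psi$ is an isomorphism; in particular its de Rham component is an isomorphism $\sfV_\dR \iso \sfV'_\dR$ of filtered flat vector bundles over $S$ whose base change to $S_\IC$ is $(\varphi')^{-1}\circ\varphi$. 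That last compatibility is precisely the statement that this map is an isomorphism of descents $(\sfV_\dR, \varphi) \iso (\sfV'_\dR, \varphi')$.

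Finally, for the ``unique isomorphism'' part, I invoke the remark at the end of~\ref{sec: sheaves descent iso}: a descent of a fixed object of $\mathsf{Vect}(S_\IC)$ to $S$ has no nontrivial automorphisms \emph{as a descent}, so isomorphic descents are uniquely isomorphic. I expect the only real content to be the first paragraph --- checking that the tautological $\psi_\IC$ is genuinely a morphism in $\sfR(S_\IC)$, and noting that Lemma~\ref{lem: descent of morphisms of systems} applies because \emph{both} systems (not just one) are weakly abelian-motivic, which is guaranteed by the hypothesis once the uniqueness statement is set up with two descents in hand. Everything after that is formal descent bookkeeping.
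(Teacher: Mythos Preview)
Your proof is correct and follows essentially the same route as the paper's: form the comparison morphism $(\id_{\sfV_B}, (\varphi')^{-1}\circ\varphi, \id_{\sfV_\et|_{S_\IC}})$ between the two systems over $S_\IC$, then apply Lemma~\ref{lem: descent of morphisms of systems} using that the \'etale component is the identity and hence trivially descends. Your write-up is more explicit about why $\psi_\IC$ is a genuine morphism in $\sfR(S_\IC)$ and about the ``unique isomorphism'' clause via~\ref{sec: sheaves descent iso}, but the argument is the same.
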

Recall that an automorphism of $(\sfV_\dR, \varphi)$ is an automorphism of $\sfV_\dR$ which fixes $\varphi$, so it has to be the identity. Therefore, it suffices to prove uniqueness up to isomorphism. 
\begin{proof}
    Assume that we have $\sfV = (\sfV_B,\sfV_\dR,\sfV_\et),\sfV' = (\sfV_B,\sfV'_\dR,\sfV_\et) \in \sfR^\ast_{\am}(S)$ such that there exist isomorphisms $\sfV_\dR|_{S_\IC} \stackrel{\varphi}{\cong} \sfV_{\dR,\IC} \stackrel{\varphi'}{\cong} \sfV'_{\dR}|_{S_\IC}$. By Lemma~\ref{lem: descent of morphisms of systems} the isomorphism $(\id_{\sfV_B}, \varphi' \circ \varphi,\id_{\sfV_{\et}}|_{S_\IC})\colon  \sfV|_{S_\IC} \to \sfV'|_{S_\IC}$ descends to $S$ since its \'etale component does, proving the claim.\end{proof}

    %Suppose that $(\sfV'_{\dR}, \sfV'_{\dR}|_{S_\IC} \sto \sfV_{\dR, \IC})$ is another such descent. Then $\sfV' := (\sfV_B, \sfV_\dR', \sfV_\et)$ is a new object of $\sfR^*_\am(S)$ such that its restriction to $S_\IC$ is identified with $\sfV|_{S_\IC}$, and its \'etale component is identified with that of $\sfV$ over $S$. By \ref{lem: descent of morphisms of systems}, there is a (necessarily unique) isomorphism $\sfV \iso \sfV'$ compatible with the existing identifications, so that the pair $(\sfV_\dR, \sfV_{\dR}|_{S_\IC} \sto \sfV_{\dR, \IC})$ has to be (necessarily uniquely) isomorphic to $(\sfV'_{\dR}, \sfV'_{\dR}|_{S_\IC} \sto \sfV_{\dR, \IC})$. 

\begin{lemma}
\label{prop: engine of desceding VdR}
    Let $S$ be a smooth variety over a field $k \subseteq \IC$ and $\sfV = (\sfV_B, \sfV_\dR, \sfV_\et)$ be an object of $\sfR^*_\am(S)$. Suppose that $S$ is an \'etale cover of another smooth $k$-variety $T$, and the restriction $\sfV|_{S_\IC}$ descends to $\sfW_\IC = (\sfW_B, \sfW_{\dR, \IC}, \sfW_{\et, \IC}) \in \sfR(T_\IC)$. Consider the diagram 
    \[\begin{tikzcd}
	{S_\IC} & S \\
	{T_\IC} & T
	\arrow[from=1-1, to=2-1]
	\arrow[from=2-1, to=2-2]
	\arrow[from=1-2, to=2-2]
	\arrow[from=1-1, to=1-2]
	\arrow["\lrcorner"{anchor=center, pos=0.125}, draw=none, from=1-1, to=2-2]
    \end{tikzcd}\]
    If $(\sfV_\et, \sfW_{\et, \IC})$ admits a simultaneous descent to a local system $\sfW_\et$ over $T$, then $(\sfV_\dR, \sfW_{\dR, \IC})$ also admits a simultaneous descent to some filtered flat vector bundle $\sfW_\dR$ over $T$, such that $(\sfW_B, \sfW_\dR, \sfW_\et) \in \sfR^*_\am(T)$. Moreover, if $\sfV \in \sfR_\am(S)$, then $\sfW \in \sfR_\am(T)$. 
\end{lemma}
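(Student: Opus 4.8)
The plan is to first produce the simultaneous descent of $\sfV_\dR$ along the étale cover $S_\IC \to T_\IC$ (on the complex side) and then to descend the resulting filtered flat vector bundle $\sfW_{\dR,\IC}$ from $T_\IC$ down to $T$, using that $\sfW_{\et}$ is already available over $T$ and that being weakly AM controls the compatibility between the two descent problems. Concretely: by hypothesis $\sfV|_{S_\IC}$ descends to $\sfW_\IC \in \sfR(T_\IC)$, so in particular $\sfV_{\dR,\IC} := \sfV_\dR|_{S_\IC}$ descends to the filtered flat bundle $\sfW_{\dR,\IC}$ over $T_\IC$; call the descent isomorphism $\gamma_\IC : \sfW_{\dR,\IC}|_{S_\IC} \iso \sfV_{\dR,\IC}$. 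Our task is to upgrade $(\sfV_\dR, \sfW_{\dR,\IC})$ to a simultaneous descent $(\sfW_\dR, \xi : \sfW_\dR|_S \iso \sfV_\dR, \zeta : \sfW_\dR|_{T_\IC} \iso \sfW_{\dR,\IC})$ compatible with $\gamma_\IC$ in the sense of \ref{sec: simultaneous descent}, and to check the resulting $(\sfW_B,\sfW_\dR,\sfW_\et)$ lies in $\sfR^*_\am(T)$.

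First I would construct $\sfW_\dR$ as an object over $T$ by Galois descent from $T_\IC$ to $T$. The filtered flat vector bundle $\sfW_{\dR,\IC}$ over $T_\IC$ needs descent data relative to $\Aut(\IC/k)$; the candidate descent data are forced by those already carried by $\sfW_\et$ over $T$. Precisely, for each $t \in T(\IC)$ and $\sigma \in \Aut(\IC/k)$ we must exhibit a $\sigma$-linear isomorphism $\sigma_{\sfW_\dR, t} : \sfW_{\dR,\IC, t} \iso \sfW_{\dR,\IC,\sigma(t)}$ satisfying the cocycle condition, and then invoke Galois descent for filtered flat vector bundles over the smooth $k$-variety $T$ (the analogue of \ref{sec: sheaves descent iso} for $\mathsf{Vect}$, together with the filtration and flat connection being preserved — these are closed conditions). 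To produce these $\sigma_{\sfW_\dR, t}$, I would pull back to a point $s \in S(\IC)$ lying over $t$ (possible since $S_\IC \to T_\IC$ is étale surjective, or after passing to a point of $S$ over $t$): over $s$ we have $\sfV_{\dR,\IC,s} = \sfW_{\dR,\IC,t}$ and the descent datum $\sigma_{\sfV_\dR, s}$ coming from $\sfV_\dR$ over $S$; the key point is that this is independent of the choice of $s$ over $t$ and of $\sigma' \in \Aut(\IC/k(t))$, so that it glues to a well-defined $\sigma_{\sfW_\dR,t}$. Here is where \ref{sec: descent of one component} and the weakly-AM hypothesis on $\sfV$ enter: at each point the descent data of $\sfV_\dR$ and of $\sfV_\et$ are \emph{both} governed by the same abelian motive $M$ (via a test object $(M,\gamma)$ and its $\gamma^\sigma$), and since $\sfW_\et$ already descends to $T$ — so its fiberwise descent data are compatible over the cover $S_\IC \to T_\IC$ and glue — the parallel compatibility \eqref{diag: defining AM compatible} forces the de Rham descent data to glue identically. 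This simultaneously gives that $\sfW_\dR|_S$ with its induced descent datum agrees with $\sfV_\dR$, i.e. the isomorphism $\xi$ (by \ref{lem: descent of morphisms of systems} or \ref{cor: uniqueness of VB descent}, once we know $(\sfW_B,\sfW_\dR,\sfW_\et)$ is weakly AM), and that $\zeta$ is compatible with $\gamma_\IC$.

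Next I would verify $(\sfW_B, \sfW_\dR, \sfW_\et) \in \sfR^*_\am(T)$: for $t \in T(\IC)$ and $\sigma \in \Aut(\IC/k)$, pick $s \in S(\IC)$ over $t$ and a test object $(M,\gamma)$ at $s$ for $\sfV$; since $\sfW_{B,t} = \sfV_{B,s}$ and $\sfW_{\dR,t} = \sfV_{\dR,s}$ as vector spaces with Hodge/filtration data, the same $M$ and $\gamma$ serve at $t$, and the compatibility diagrams \eqref{diag: defining AM compatible} for $\sfW$ at $t$ follow from those for $\sfV$ at $s$ together with the construction of $\sigma_{\sfW_\dR,t}$ and $\sigma_{\sfW_\et,t}$ by restriction from a point over $t$ — here the identifications $\sigma_{\sfW_\dR, t}$ restricts to $\sigma_{\sfV_\dR, s}$ and $\sigma_{\sfW_\et, t}$ restricts to $\sigma_{\sfV_\et, s}$ (for suitably compatible $s$, $\sigma(s)$) are exactly what make the $\sfW$-diagrams commute. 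Finally, for the last sentence: if $\sfV \in \sfR_\am(S)$, then after replacing $S$ by a further finite étale cover $S'$ we may assume $\sfV$ is strongly AM over $S'$, given by an abelian scheme $\sA/S'$; the descent just constructed then exhibits $\sfW|_{S'}$ (hence $\sfW$ after noting $S' \to T$ is finite étale) as étale-locally strongly AM, because the relevant abelian scheme and projector $\bpi$ descend along with the de Rham data by the same argument. The main obstacle I expect is the well-definedness (independence of the choice of point $s$ over $t$, and the cocycle condition) of the glued de Rham descent datum $\sigma_{\sfW_\dR, t}$ over $T$ — this is precisely where one cannot argue by pure sheaf theory and must use that weakly-AM ties the de Rham descent to the étale descent, for which $\sfW_\et$ over $T$ is already given; everything else is bookkeeping with the diagrams in \ref{sec: sheaves descent iso}, \ref{sec: simultaneous descent}, \ref{sec: descent of one component}, and \ref{def: compatible with AM}.
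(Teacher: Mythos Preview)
Your approach differs substantially from the paper's, and as written it has a gap.

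The paper does \emph{not} attempt Galois descent from $T_\IC$ to $T$. Instead it uses \'etale descent along the finite \'etale map $S \to T$ of $k$-varieties. Concretely: set $U := S \times_T S$ with projections $p_1, p_2 : U \to S$. The given descent of $\sfV|_{S_\IC}$ to $\sfW_\IC$ is encoded by an isomorphism $\varphi_\IC : (p_1^* \sfV)|_{U_\IC} \sto (p_2^* \sfV)|_{U_\IC}$ in $\sfR(U_\IC)$ satisfying the cocycle condition. The hypothesis that $(\sfV_\et, \sfW_{\et,\IC})$ admits a simultaneous descent $\sfW_\et$ to $T$ says exactly that the \'etale component of $\varphi_\IC$ descends to $U$. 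Since $p_1^*\sfV, p_2^*\sfV \in \sfR^*_\am(U)$, Lemma~\ref{lem: descent of morphisms of systems} then forces all of $\varphi_\IC$ (in particular its de Rham component) to descend to an isomorphism $\varphi$ over $U$; the cocycle condition persists because it can be checked after base change to $\IC$. The de Rham component of $\varphi$ is then the \'etale-descent datum producing $\sfW_\dR$ over $T$. That $\sfW \in \sfR^*_\am(T)$ is immediate from \ref{sec: motivation for weakly AM}(ii), and the last sentence is literally the definition of $\sfR_\am(-)$ (no need to descend the abelian scheme or projector).

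The gap in your plan is the step where you try to manufacture a descent of the sheaf $\sfW_{\dR,\IC}$ from $T_\IC$ to $T$ by specifying, for each $t \in T(\IC)$ and $\sigma$, a $\sigma$-linear isomorphism of fibers $\sigma_{\sfW_\dR,t}$. The framework of \ref{sec: sheaves descent iso} gives a pointwise criterion for when a \emph{global section} or \emph{morphism} descends, once both source and target sheaves are already known to descend; it is not a mechanism for producing a descent of the sheaf itself from fiberwise data. A collection of fiberwise $\sigma$-linear maps need not glue to a semilinear action on the coherent sheaf (let alone one compatible with the connection and filtration), and you give no argument that yours does. The paper's trick is precisely to avoid this issue: by working on $U = S \times_T S$, the descent problem becomes ``descend the morphism $\varphi_\IC$ from $U_\IC$ to $U$,'' and both $p_i^*\sfV$ already live over $U$, so Lemma~\ref{lem: descent of morphisms of systems} applies directly. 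If you want to salvage your route, you would need to reformulate it as descending a morphism between objects already defined over $T$ (or $U$), which is exactly what the paper does.
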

We remark that the identifications $\sfW_B \tensor \sO^\an_{T_\IC} = \sfW_\dR|_{T_\IC}$ and $\sfW_B \tensor \IA_f = \sfW_\et |_{T_\IC}$ are encoded in the definition of simultaneous descent (see \ref{sec: simultaneous descent}). 

%The argument below is clearly inspired by the proof of \cite[(2.2.2)]{KisinInt}. 

\begin{proof}
     Let $p_1, p_2$ be the two projections of $U := S \times_T S$ to $S$. The descent of $\sfV_\IC := \sfV|_{S_\IC}$ to $\sfW_\IC$ is provided by an isomorphism $\varphi_\IC : (p_1^* \sfV)|_{U_\IC} \sto (p_2^* \sfV)|_{U_\IC}$ that satisfies the cocycle condition. That $(\sfV_\et, \sfW_{\et, \IC})$ admits a simultanenous descent to a local system $\sfW_\et$ over $T$ implies that the \'etale component of $\varphi_\IC$ descends to $U$, so that \ref{lem: descent of morphisms of systems} implies that $\varphi_\IC$ descends to an isomorphism $\varphi : p_1^* \sfV \sto p_2^* \sfV$ over $U$. Note that $\varphi$ still satisfies the cocycle condition, because it is a condition that can be checked after we base change to $U_\IC$. The de Rham component of $\varphi$, which respects the connections and filtrations, provides the descent datum to construct the desired descent of filtered flat vector bundle $\sfW_\dR$, so that $\sfW :=  (\sfW_B, \sfW_\dR, \sfW_\et)$ is a system of realizations over $T$. By \ref{sec: motivation for weakly AM}(ii), that $\sfV \in \sfR^*_\am(S)$ implies $\sfW \in \sfR^*_\am(T)$. If moreover $\sfV \in \sfR_\am(S)$, then $\sfW \in \sfR_\am(T)$ by the very definition of $\sfR_\am(-)$. 
\end{proof}

\begin{lemma}
\emph{(Rigidity Lemma)}
\label{lem: rigidity}
    Let $S$ be a smooth variety over a field $k \subseteq \IC$ and $(G, V, \sfK)$ be as in \ref{sec: level structure bc}. Let $\sfV = (\sfV_B, \sfV_\dR, \sfV_\et)$ and $\sfW = (\sfW_B, \sfW_\dR, \sfW_\et)$ be objects of $\sfR^*_\am(S)$ equipped with rigid $\sfK$-level structures $[\eta]$ and $[\xi]$ respectively. Then every isomorphism $\lambda : (\sfV, [\eta])|_{S_\IC} \sto (\sfW, [\xi])|_{S_\IC}$ over $S_\IC$ descends to an isomorphism $(\sfV, [\eta])\sto (\sfW, [\xi])$ over $S$. 
\end{lemma}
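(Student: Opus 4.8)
The plan is to reduce the statement to descending only the étale component $\lambda_\et$ of $\lambda$, and then to remove the obstruction to that descent using the weakly abelian-motivic hypothesis together with the rigidity of $[\eta]$. Over $S_\IC$ a system of realizations is just a polarizable VHS, so $\lambda$ is determined by its Betti component and $\lambda_\et = \lambda_B \tensor \IA_f$. Regarding $\lambda_\et$ as a global section of $(\sfV^\vee \tensor \sfW)_\et$ over $S_\IC$ and using that $\sfV_\et$, $\sfW_\et$ already live over $S$, Lemma~\ref{lem: descent of morphisms of systems} shows that if $\lambda_\et$ descends to $S$ then $\lambda$ descends to a morphism $\lambda_0 : \sfV \to \sfW$ over $S$; this $\lambda_0$ is automatically an isomorphism and automatically carries $[\eta]$ to $[\xi]$, as both assertions can be checked after the faithfully flat base change $S_\IC \to S$. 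So the entire content is the descent of $\lambda_\et$, and by the stalk criterion of \ref{sec: sheaves descent iso} this holds if and only if for every $s \in S(\IC)$ and $\sigma \in \Aut(\IC/k)$ the automorphism
\[ \mu_\sigma \;:=\; \lambda_{\et,\sigma(s)}^{-1} \circ \sigma_{\sfW_\et, s} \circ \lambda_{\et, s} \circ \sigma_{\sfV_\et, s}^{-1} \;\in\; \Aut(\sfV_{\et,\sigma(s)}) \]
is the identity. I now fix $s$ and $\sigma$.

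I would record two facts about $\mu_\sigma$. First, $\mu_\sigma$ fixes $[\eta]_{\sigma(s)}$: since $[\eta]$ and $[\xi]$ are $\sfK$-level structures over the $k$-variety $S$, the criterion recalled in \ref{sec: define eta(v)} gives $\sigma_{\sfV_\et,s}([\eta]_s) = [\eta]_{\sigma(s)}$ and $\sigma_{\sfW_\et,s}([\xi]_s) = [\xi]_{\sigma(s)}$, while $\lambda_{\et,s}([\eta]_s) = [\xi]_s$ and $\lambda_{\et,\sigma(s)}([\eta]_{\sigma(s)}) = [\xi]_{\sigma(s)}$ because $\lambda$ carries $[\eta]$ to $[\xi]$ over $S_\IC$; chaining the four maps gives $\mu_\sigma([\eta]_{\sigma(s)}) = [\eta]_{\sigma(s)}$. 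Second, $\mu_\sigma$ is the $\IA_f$-base change of a $\IQ$-rational automorphism $\bar\mu_\sigma$ of the Hodge structure $(\sfV_{B,\sigma(s)}, \sfV_{\dR,\sigma(s)})$. For this I use the weakly-AM hypothesis: choose test objects $(M, \gamma)$ for $\sfV$ and $(N, \delta)$ for $\sfW$ at $s$, together with the data $(M^\sigma, \gamma^\sigma)$, $(N^\sigma, \delta^\sigma)$ at $\sigma(s)$ supplied by Definition~\ref{def: compatible with AM}. Transporting the stalk $\lambda_{B,s}$ along $\gamma$ and $\delta$ yields an isomorphism of Hodge structures $\w_\Hdg(M) \sto \w_\Hdg(N)$, which by Theorem~\ref{thm: tensor on AV always AH} is the Hodge realization of a unique isomorphism $\rho : M \sto N$ in $\Mot_\Ab(\IC)$; set $\rho^\sigma := \rho \tensor_\sigma \IC : M^\sigma \sto N^\sigma$ and let $\nu_\sigma$ be the isomorphism of Hodge structures $(\sfV_{B,\sigma(s)}, \sfV_{\dR,\sigma(s)}) \sto (\sfW_{B,\sigma(s)}, \sfW_{\dR,\sigma(s)})$ obtained by transporting $\w_\Hdg(\rho^\sigma)$ along $\gamma^\sigma$ and $\delta^\sigma$. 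A diagram chase combining the two étale squares in \eqref{diag: defining AM compatible} (one for $\sfV$, one for $\sfW$) with $\w_\et(\rho^\sigma) \circ \iso_\bc = \iso_\bc \circ \w_\et(\rho)$ then identifies $\nu_{\sigma,B} \tensor \IA_f$ with $\sigma_{\sfW_\et,s} \circ \lambda_{\et,s} \circ \sigma_{\sfV_\et,s}^{-1}$. Hence $\bar\mu_\sigma := (\lambda_{B,\sigma(s)})^{-1} \circ \nu_{\sigma,B}$ is an automorphism of the Hodge structure $(\sfV_{B,\sigma(s)}, \sfV_{\dR,\sigma(s)})$ with $\bar\mu_\sigma \tensor \IA_f = \mu_\sigma$; in particular it is $\IQ$-rational, preserves the Hodge filtration on $\sfV_{\dR,\sigma(s)}$, and (by the first fact) preserves $[\eta]_{\sigma(s)}$.

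The rigidity of $[\eta]$ on $\sfV$ (Definition~\ref{def: rigid}) now forces $\bar\mu_\sigma = \id$, hence $\mu_\sigma = \id$; since $s$ and $\sigma$ were arbitrary, $\lambda_\et$ descends to $S$, which finishes the argument. I expect the diagram chase in the second fact — matching the base-change isomorphisms $\iso_\bc$ against the Galois-descent isomorphisms $\sigma_{-,s}$ attached to the $S$-structures of $\sfV_\et$ and $\sfW_\et$ — to be the routine-but-delicate point. The genuine idea, which is what allows the $\IQ$-linear notion of rigidity to be applied to the a priori merely $\IA_f$-linear automorphism $\mu_\sigma$, is precisely the passage to the abelian motives $M, N, M^\sigma, N^\sigma$: this is exactly the role played by the weakly abelian-motivic condition together with the full faithfulness of $\w_\Hdg$ on abelian motives.
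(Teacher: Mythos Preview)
Your proof is correct and follows essentially the same strategy as the paper: reduce to descent of the \'etale component via \ref{lem: descent of morphisms of systems}, then at each $(s,\sigma)$ use the weakly-AM test objects to promote the a priori $\IA_f$-linear obstruction to a $\IQ$-rational Hodge automorphism, and kill it by rigidity.

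The one difference worth noting is in how you set up the test objects. You pick independent abelian motives $(M,\gamma)$ for $\sfV$ and $(N,\delta)$ for $\sfW$, and then invoke the full faithfulness of $\w_\Hdg$ on $\Mot_\Ab(\IC)$ (Theorem~\ref{thm: tensor on AV always AH}) to produce $\rho : M \sto N$. The paper instead reuses the \emph{same} motive: having chosen $(M,\gamma)$ for $\sfV$, it takes $(M,\nu)$ with $\nu := \lambda_s \circ \gamma$ as a test object for $\sfW$, so no appeal to full faithfulness is needed and one works directly with the two Hodge isomorphisms $\gamma^\sigma, \nu^\sigma : \w_\Hdg(M^\sigma) \to (\sfV_{B,\sigma(s)},\ldots),\,(\sfW_{B,\sigma(s)},\ldots)$. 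Rigidity is then applied on the $\sfW$-side (to conclude $\nu^\sigma = \lambda_{\sigma(s)} \circ \gamma^\sigma$) rather than on the $\sfV$-side as you do. Your route is more symmetric in $\sfV$ and $\sfW$; the paper's is slightly more economical.
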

\begin{proof}
    The conclusion amounts to saying that the isomorphisms $\sfV_{\dR}|_{S_\IC} \sto \sfW_{\dR}|_{S_\IC}$ and $\sfV_{\et}|_{S_\IC} \sto \sfW_{\et}|_{S_\IC}$ over $S_\IC$ given by $\lambda$ descend to isomorphisms $\sfV_\dR \sto \sfW_\dR$ and $\sfV_\et \sto \sfW_\et$ over $S$ respectively, and the latter moreover sends $[\eta]$ to $[\xi]$. In fact, it suffices to show that $\sfV_{\et}|_{S_\IC} \sto \sfW_{\et}|_{S_\IC}$ descends. Indeed, it is then automatic that $[\eta]$ is sent to $[\xi]$ (cf. \ref{sec: define eta(v)}), and one applies \ref{lem: descent of morphisms of systems} to conclude that $\sfV_{\dR}|_{S_\IC} \sto \sfW_{\dR}|_{S_\IC}$ descends as well.

    We are given that $\lambda$ sends $[\eta^\an] :=[\eta]_{S_\IC}$ to $[\xi^\an] :=[\xi]_{S_\IC}$. Write $\lambda_x$ for the fiber of $\lambda$ at a point $x \in S(\IC)$ and $\lambda_{B, x}$ for its Betti component. Note that $\lambda_{B, x} \tensor \IA_f$ sends $[\eta^\an]_x$ to $[\xi^\an]_x$.  
    
    Fix an arbitrary $s \in S(\IC)$ and $\sigma \in \Aut(\IC/k)$. Let $(M, \gamma)$ be a test object at $s$ for $\sfV$ to be weakly AM and remember the resulting $\gamma^\sigma : \w_\Hdg(M^\sigma) \sto (\sfV_{B, \sigma(s)}, \sfV_{\dR, \sigma(s)})$ (see \ref{def: compatible with AM}). Set $\nu = \lambda_s \circ \gamma$. Then $(M, \nu)$ becomes a test object at $s$ for $\sfW$, and take the resulting $\nu^\sigma : \w_\Hdg(M^\sigma) \sto (\sfW_{B, \sigma(s)}, \sfW_{\dR, \sigma(s)})$ such that the diagrams in \ref{def: compatible with AM} commute with $\gamma^\sigma$ replaced by $\nu^\sigma$. Let $[\eta_M]$ be the level structure on $\w_\et(M)$ given by pulling back $[\eta^\an]_s$ along $\gamma_B \tensor \IA_f$. Then since $\lambda_{B, s} \tensor \IA_f$ sends $[\eta^\an]_s$ to $[\xi^\an]_s$, $[\eta_M]$ is also the pullback of $[\xi^\an]_s$ along $\nu_B \tensor \IA_f$. By \ref{sec: descent of one component}, $\gamma^\sigma_B \tensor \IA_f$ sends $[\eta_M]^\sigma$ to $[\eta^\an]_{\sigma(s)}$ and $\nu^\sigma_B \tensor \IA_f$ sends $[\eta_M]^\sigma$ to $[\xi^\an]_{\sigma(s)}$.

    Here comes the key observation: Since $\lambda_{B, \sigma(s)} \tensor \IA_f$ sends $[\eta^\an]_{\sigma(s)}$ to $[\xi^\an]_{\sigma(s)}$, $\lambda_{\sigma(s)} \circ \gamma^\sigma$ is another isomorphism $\w_\Hdg(M^\sigma) \sto (\sfW_{B, \sigma(s)}, \sfW_{\dR, \sigma(s)})$ whose \'etale realization sends $[\eta_M]^\sigma$ to $[\xi^\an]_{\sigma(s)}$. By the rigidity assumption, there is no nontrivial automorphism of the Hodge structure $(\sfW_{B, \sigma(s)}, \sfW_{\dR, \sigma(s)})$ whose \'etale realization fixes $[\xi^\an]_{\sigma(s)}$. Therefore, we must have $\nu^\sigma = \lambda_{\sigma(s)} \circ \gamma^\sigma$. 

    Now one readily checks that the following diagram commutes:  
    \[\begin{tikzcd}
	& {\w_\et(M) } \\
	{\sfV_{B, s} \tensor \IA_f} && {\sfW_{B, s} \tensor \IA_f} \\
	{} & {\w_\et(M^\sigma)} \\
	{\sfV_{B, \sigma(s)} \tensor \IA_f} && {\sfW_{B, \sigma(s)} \tensor \IA_f}
	\arrow["{\lambda_{B, s} \tensor \IA_f }", from=2-1, to=2-3]
	\arrow["{\gamma_B \tensor \IA_f}"', from=1-2, to=2-1]
	\arrow["{\nu_B \tensor \IA_f}", from=1-2, to=2-3]
	\arrow["{\sigma_{\sfV_\et, s}}"', from=2-1, to=4-1]
	\arrow["{\lambda_{B, \sigma(s)} \tensor \IA_f}", from=4-1, to=4-3]
	\arrow["{\sigma_{\sfW_\et, s}}", from=2-3, to=4-3]
	\arrow["{\gamma^\sigma_B \tensor \IA_f}"', from=3-2, to=4-1]
	\arrow["{\nu_B^\sigma \tensor \IA_f}", from=3-2, to=4-3]
	\arrow["{\iso_\bc}"'{pos=0.7}, from=1-2, to=3-2]
    \end{tikzcd}\]
    Indeed, by the construction of $\gamma^\sigma$ and $\nu^\sigma$, the two squares which contain $\w_\et(M) \iso_\bc \w_\et(M^\sigma)$ commute. The upper triangle commutes by definition of $\nu$, and the lower triangle commutes by the preceeding paragraph. Therefore, the square at the front has to commute as well. As $s \in S(\IC)$ and $\sigma \in \Aut(\IC/k)$ were arbitrary, this implies that $\lambda_B \tensor \IA_f$ descends to an isomorphism $\sfV_\et \sto \sfW_\et$ as desired. 
\end{proof}

\begin{remark}
The above rigidity lemma roughly says that Galois descent of isomorphisms between weakly AM systems of realizations is forced upon us by the existence of certain rigid level structures. The following well known example can be seen as a special case of the above lemma, in which the role played by the rigid level structure is made more explicit: Suppose for $i = 1, 2$ and $N \ge 3$, $(E_i, \alpha_i)$ is an elliptic curve over a smooth variety $S$ over $k \subseteq \IC$ with a (full) $N$-level structure $\alpha_i : \underline{(\IZ/N\IZ)}_S^{\oplus 2} \sto (E_i/S)[N]$. Set $\sfK := \ker(\GL_2(\what{\IZ}) \to \GL_2(\IZ/N \IZ))$. Then the first cohomology of $E_i$ defines an object $\sfH_i \in \sfR_\am^\circ(S)$ and $\alpha_i$ defines a $\sfK$-level structure on $\sfH_i$. 

If $(E_1, \alpha_1)|_{S_\IC} \iso (E_2, \alpha_2)|_{S_\IC}$ (or equivalently $(\sfH_1, \alpha_1)|_{S_\IC} \iso (\sfH_2, \alpha_2)|_{S_\IC}$), then in fact $(E_1, \alpha_1) \iso (E_2, \alpha_2)$ over $S$ already (and hence $(\sfH_1, \alpha_1) \iso (\sfH_2, \alpha_2)$). This follows a fortiori from the existence of a fine moduli space of elliptic curves with level $N$-structure over $k$. Indeed, two $S$-valued points on this moduli space must be equal if they are equal after tensoring with $\IC$. But without using the moduli theory this can be seen as follows: Without loss of generality, suppose that $S$ is connected and $K$ is the fraction field of $S$. Then $K$-forms of $(E_i, \alpha_i)|_K$ are classified by $\H^1(\Gal_K, \Aut((E_i, \alpha_i)|_{\bar{K}}))$, which is trivial as $\Aut((E_i, \alpha_i)|_{\bar{K}})$ is trivial. The resulting isomorphism $(E_1, \alpha_1)|_K \iso (E_2, \alpha_2)|_K$ (which has to be unique) extends over $S$.
\end{remark}

\section{Shimura Varieties}
\label{sec: Shimura}
Let $(G, \Ohm)$ be a Shimura datum which satisfies the axioms in \cite[Ch II, (2.1)]{Milne:CanonicalModels}. Let $E(G, \Ohm)$ be the reflex field, $Z$ be the center of $G$ and $Z_s$ be the maximal anisotropic subtorus of $Z$ that is split over $\IR$. In this note, we always assume that 
\begin{equation}
\label{eqn: assumption on center}
    \text{the weight is defined over $\IQ$ and $Z_s$ is trivial}. 
\end{equation}
Note that in particular the latter condition ensures that $Z(\IQ)$ is discrete in $Z(\IA_f)$ (\cite[Rmk~5.27]{MilIntro}). We will often drop the Hermitian symmetric domain $\Ohm$ from the notation of Shimura varieties when no confusion would arise. 

For any neat compact open subgroup $\sfK \subseteq G(\IA_f)$, let $\Sh_\sfK(G)_\IC$ denote the resulting Shimura variety with a complex uniformization $G(\IQ) \backslash \Ohm \times G(\IA_f) / \sfK$ and let $\Sh_\sfK(G)$ denote the canonical model over $E(G, \Ohm)$. Let $\Sh(G)$ denote the inverse limit $\varprojlim_\sfK \Sh_\sfK(G)$ as $\sfK$ runs through all compact open subgroups. Under our assumptions, $\Sh(G)(\IC)$ is described by $G(\IQ) \backslash \Ohm \times G(\IA_f)$ (\cite[(5.28)]{MilIntro}). Note that $\Sh_\sfK(G) = \Sh(G)/ \sfK$.  

%We say that $K$ is \textbf{admissible} if it is neat and is of the form $K = \prod_\ell K_\ell$ where $\ell$ runs through all primes and $K_\ell$ is a subgroup of $G(\IQ_\ell)$. For a prime $p$, we write $K^p$ for $\prod_{\ell \neq p} K_\ell$. Note that any compact open subgroup contains an admissible one. 

%\begin{theorem}
    
%\end{theorem}

%If $(G', \Ohm')$ is another Shimura datum of Hodge type and $\shF'$ is the corresponding functor, 

\subsection{Automorphic VHS and \'etale local systems} Let $G \to \GL(V)$ be a representation. For any neat compact open subgroup $\sfK \subseteq G(\IA_f)$, we can attach to $\Sh_\sfK(G)_\IC$ a polarizbale VHS $\sfV_\Hdg = (\sfV_B, \sfV_{\dR, \IC})$ (cf. \cite[Ch. II~3.3]{Milne:CanonicalModels}, \cite[\S2.2]{Taelman2}). In particular, $\sfV_B$ is defined to be the contraction product $V \times^{G(\IQ)} [\Ohm \times G(\IA_f) / \sfK]$. The \'etale local system defined by $\sfV_B \tensor \IA_f$ can be alternatively constructed as $(V \tensor \IA_f) \times^\sfK [G(\IQ) \backslash  \Omega \times G(\IA_f) ]$. This allows us to descend it to $\Sh_\sfK(G)$. To be pedantic, choose an $\what{\IZ}$-lattice $\Lambda$ which is stabilized by $\sfK$ and let $\rho: \sfK \to \GL(\Lambda)$ be the representation. Set $\sfK_n = \sfK \cap \rho^{-1} \ker(\GL(\Lambda) \to \GL(\Lambda/n))$. The natural map $\Sh_{\sfK_n}(G)_\IC \to \Sh_\sfK(G)_\IC$ is a finite \'etale map with deck transformation group $\sfK/\sfK_n$. Then we define a $\Lambda / n$-local system on $\Sh_\sfK(G)_\IC$ by 
$$ L_n := (\Lambda / n) \times^{\sfK/\sfK_n} \Sh_\sfK(G)_\IC.  $$
Then $\sfV_B \tensor \IA_f = (\varprojlim_n L_n) \tensor \IQ$, where $n$ goes to $\infty$ multiplicatively. Since $\Sh_{\sfK_n}(G)_\IC \to \Sh_\sfK(G)_\IC$ is defined over $E(G, \Ohm)$, the \'etale sheaf $\sfV_B \tensor \IA_f$ descends to $\Sh_\sfK(G)$. The resulting etale sheaf $\sfV_\et$ over $\Sh_\sfK(G)$ is well defined and is independent of the choice of $\Lambda$, so we obtain a well defined tensor functor from the category $\mathrm{Rep}(G)$ of $\IQ$-representations of $G$ to $\Loc(\Sh_\sfK(G))$ (cf. the description in \cite[340-341]{LiuZhu}). We say that $(\sfV_B, \sfV_{\dR, \IC})$ is the \textbf{automorphic VHS} on $\Sh_\sfK(G)_\IC$ and $\sfV_\et$ is the \textbf{automorphic \'etale local system} on $\Sh_\sfK(G)$ associated to $V$.

\textit{We use a superscript $\sfK$ (e.g., $\sfV_\Hdg^\sfK$) when we need to emphasize that we are considering the objects at level $\sfK$.} We also remark that $\sfV^\sfK_\et$ admits a \textbf{tautological $\sfK$-level structure} $[\eta^\sfK_V]$, since by construction its restriction to $\Sh(G)$ comes equipped with a trivialization $V \tensor \underline{\IA_f} \sto \sfV^\sfK_\et|_{\Sh(G)}$, and $\sfV^\sfK_\et|_{\Sh(G)}$ admits a $G(\IA_f)$-equivariant structure. Conversely, the trivialization over $\Sh(G)$ is recovered by $\varprojlim_{\sfK' \subseteq \sfK} [\eta_V^{\sfK'}]$. 

%Note that by construction $L_n$ comes with a trivialization $t_n : \underline{\Lambda/n} \sto L_n|_{\Sh_{K_n}}$, and the pair $(L_n, t_n)$ has no nontrivial automorphism, i.e., is well defined up to unique isomorphism. Let $x$ be any geometric point of $\Sh_K(G)$, and choose a compatible system of points $y_n \to \Sh_{K_n}(G)$ that lifts $x$. Now evaluation of $\{t_n \}$ at $\{ y_n \}$ gives us an isomorphism $\Lambda \tensor \IQ \sto \sfV_{\IA_f, x}$. Its $K$-orbit is precisely $[\boldeta_\sfV]_x$, which is independent of the choices of $\{ y_n \}$. 

The reader may easily check that the above constructions are suitably functorial in $V$: 
\begin{theorem}
\label{thm: functoriality of automorphic sheaves}
    Let the objects $\sfV_B, \sfV_{\dR, \IC}, \sfV_\et$ and $[\eta_V]$ be as defined above for a neat $\sfK$. Let $W$ be another representation of $G$ and construct the corresponding $\sfW_B, \sfW_{\dR, \IC}, \sfW_\et$, and $[\eta_W]$. Every morphism $V \to W$ induces a morphism $(\sfV_B, \sfV_{\dR, \IC}) \to (\sfW_B, \sfW_{\dR, \IC})$ of VHS over $\Sh_\sfK(G)_\IC$ and a morphism $\sfV_\et \to \sfW_\et$ of \'etale local systems over $\Sh_\sfK(G)$. These induced morphisms are uniquely characterized by the following properties:
    \begin{enumerate}[label=\upshape{(\alph*)}]
        \item The restriction of $\sfV_\et \to \sfW_\et$ to $\Sh_\sfK(G)_\IC$ is equal to $\sfV_B \tensor \IA_f \to \sfW_B \tensor \IA_f$ under the canonical identifications. 
        \item The diagram 
       \[\begin{tikzcd}
	{V \tensor \underline{\IA_f}} & {\sfV_\et|_{\Sh(G)}} \\
	{W \tensor \underline{\IA_f}} & {\sfW_\et|_{\Sh(G)}}
	\arrow["\sim", from=1-1, to=1-2]
	\arrow[from=1-2, to=2-2]
	\arrow[from=1-1, to=2-1]
	\arrow["\sim", from=2-1, to=2-2]
    \end{tikzcd}\]
    commutes, where the horizontal arrows are the trivializations provided by $\varprojlim_{\sfK' \subseteq \sfK} [\eta^{\sfK'}_V]$ and $\varprojlim_{\sfK' \subseteq \sfK} [\eta^{\sfK'}_W]$ respectively. 
    \end{enumerate}
\end{theorem}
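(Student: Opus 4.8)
The statement is a functoriality/naturality claim for the assignment $V \mapsto (\sfV_B, \sfV_{\dR,\IC}, \sfV_\et, [\eta_V])$, so the strategy is to unwind the contraction-product constructions and check that a $G$-equivariant linear map $f : V \to W$ induces compatible maps on all the pieces. First I would handle the complex side: since $\sfV_B = V \times^{G(\IQ)} [\Ohm \times G(\IA_f)/\sfK]$ and similarly for $\sfW_B$, the map $f$ is $G(\IQ)$-equivariant, hence descends to a map of contraction products $\sfV_B \to \sfW_B$; one checks it respects the Hodge filtrations fibrewise because at each point the Hodge structure is pulled back from the cocharacter $h \in \Ohm$ and $f$ is a morphism of $G$-representations, hence of the associated Hodge structures. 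Tensoring with $\sO^{\an}_{S_\IC}$ gives the de Rham map, and the algebraicity/descent of $\sfV_{\dR,\IC}$ to $\sfV_\dR$ over the reflex field comes along because the morphism $f$ is defined over $\IQ$ and the descent of the de Rham bundle is canonical (cf. the construction in \cite[Ch.~II~3.3]{Milne:CanonicalModels}).

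**The étale side and descent.** For $\sfV_\et$, recall it was built by descending $\varprojlim_n L_n \tensor \IQ$ where $L_n = (\Lambda/n) \times^{\sfK/\sfK_n} \Sh_\sfK(G)_\IC$. Choosing lattices $\Lambda_V \subseteq V \tensor \IA_f$ and $\Lambda_W \subseteq W \tensor \IA_f$ both stabilized by a common $\sfK$ (shrink $\sfK$ if necessary, and note the final sheaf is independent of this choice), $f$ can be arranged to send $\Lambda_V$ into $\Lambda_W$ up to scaling; it is then $\sfK/\sfK_n$-equivariant and induces $L_n^V \to L_n^W$ for all $n$, hence a map $\sfV_B \tensor \IA_f \to \sfW_B \tensor \IA_f$ over $\Sh_\sfK(G)_\IC$. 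The point is that this map descends to $\Sh_\sfK(G)$: this is immediate because the covers $\Sh_{\sfK_n}(G) \to \Sh_\sfK(G)$ and their deck groups are already defined over $E(G,\Ohm)$, and the transition maps $L_n^V \to L_n^W$ are induced by the fixed $\IA_f$-linear, $\sfK$-equivariant map $f$, which carries no extra complex structure to descend. Equivalently, one invokes \ref{cor: uniqueness of VB descent} / the uniqueness of descents in \ref{sec: sheaves descent iso}: $\sfV_\et \to \sfW_\et$ is the unique descent of its base change, so it exists as soon as the base-changed map is compatible with the descent data, which it visibly is.

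**Compatibility with level structures and uniqueness.** Property (b) is essentially a tautology once one recalls that $[\eta_V^\sfK]$ is recovered from the canonical trivialization $V \tensor \underline{\IA_f} \sto \sfV_\et^\sfK|_{\Sh(G)}$ coming from the presentation $(V \tensor \IA_f) \times^\sfK [G(\IQ)\backslash \Ohm \times G(\IA_f)]$: the induced map $\sfV_\et \to \sfW_\et$ was defined precisely by applying $f \tensor \IA_f$ inside this contraction product, so the square in (b) commutes by construction. For uniqueness: on $\Sh_\sfK(G)_\IC$ property (a) pins down the étale map, and then \ref{cor: uniqueness of VB descent} (or just the uniqueness of descents noted in \ref{sec: sheaves descent iso}) pins down its descent to $\Sh_\sfK(G)$; the VHS map is determined because the underlying Betti local system map is $f \times^{G(\IQ)} \id$ and a morphism of VHS is determined by its Betti component. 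One should also note $\sfV_\et \to \sfW_\et$ is independent of the auxiliary lattices since the whole system $\sfV_\et$ is.

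**Main obstacle.** The only genuinely non-formal point is arranging a \emph{single} compact open $\sfK$ and a pair of lattices $\Lambda_V, \Lambda_W$ for which $f$ is integral and $\sfK$-equivariant simultaneously, so that the $L_n$-level description works for both $V$ and $W$ at once; this is a standard lattice-chasing argument (replace $f$ by a scalar multiple, intersect stabilizers), but it is the step where a little care is needed, and one must then check the resulting morphism of pro-sheaves is insensitive to these choices before concluding. Everything else is bookkeeping with contraction products and the uniqueness of Galois descent already established in \S\ref{sec: gen Gal Des}.
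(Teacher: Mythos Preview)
The paper does not give a proof of this statement: it is introduced with ``The reader may easily check that the above constructions are suitably functorial in $V$'' and then only the statement is recorded. Your proposal correctly supplies the routine check, and the strategy (unwind the contraction-product descriptions, observe that a $G$-equivariant $f$ induces compatible maps on each piece, and use that the finite \'etale covers $\Sh_{\sfK_n}(G) \to \Sh_\sfK(G)$ are already defined over the reflex field to see the \'etale morphism descends) is exactly what is intended.

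Two small corrections. First, the theorem only asserts a morphism $(\sfV_B, \sfV_{\dR,\IC}) \to (\sfW_B, \sfW_{\dR,\IC})$ of VHS over $\Sh_\sfK(G)_\IC$, not a morphism of the descended filtered flat bundles $\sfV_\dR \to \sfW_\dR$ over $\Sh_\sfK(G)$; the latter descent is only constructed later in \ref{thm: autVB exists} and genuinely requires the weakly AM machinery, so your sentence about ``the descent of the de Rham bundle is canonical'' is out of place here and should be deleted. Second, your appeal to \ref{cor: uniqueness of VB descent} for the \'etale morphism is the wrong reference (that corollary concerns the de Rham component); the correct justification is simply the uniqueness-of-descents remark in \ref{sec: sheaves descent iso}, which you also cite. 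With these adjustments your argument is fine, and the lattice-arrangement issue you flag as the ``main obstacle'' is indeed the only point requiring any care.
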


%\subsection{Moduli Interpretation over $\IC$} 
%Let $I$ be an indexing set of $V^G$ and denote $V^G$ by $\{ s_\alpha \}_{\alpha \in I}$. Note that each $s_\alpha$ can be viewed as a morphism from the trivial representation of $G$ to $V$. Therefore, applying \ref{thm: functoriality of automorphic sheaves}, we obtain a global section $\bs_{\alpha, B} \in \H^0(\sfV_B^\tensor)$ which is everywhere of Hodge type $(0, 0)$ and whose \'etale realization descends to a global section $\bs_{\alpha, \et} \in \H^0(\sfV_\et^\tensor)$. Note that the diagram in \ref{thm: functoriality of automorphic sheaves}(b) implies that $\bs_{\alpha, \et}$ has the property that for every geometric point $x \to \Sh_\sfK(G)$, every representative of $[\eta_V]_x$ sends $s_\alpha$ to $\bs_{\alpha, \et, x}$. Let $[\eta^\an_V]$ be the restriction of $[\eta_V]$ to $\Sh_\sfK(G)_\IC$. 

Define $[\eta^\an_V] := [\eta_V]_{\Sh_\sfK(G)_\IC}$. Using the automorphic VHS $(\sfV_B, \sfV_{\dR, \IC})$, we can already give a moduli interpretation of $\Sh_\sfK(G)_\IC$: 
\begin{theorem}
\label{thm: moduli over C}
    Assume that $V$ is faithful. For every smooth $\IC$-variety $T$, let $\sM_V(T)$ be the groupoid of pairs $(\IW, [\xi^\an])$ where $\IW = (\IW_B, \IW_\dR)$ is a VHS over $T$ and $[\xi^\an]$ is a $\sfK$-level structure on $\IW_B \tensor \IA_f$ which is $\IW$-rational, and $(\IW, [\xi^\an])$ is of type $\Ohm$. 
    
    Then $(\sfV_\IC := (\sfV_B, \sfV_{\dR, \IC}),  [\eta^\an_V])$ is an object of $\sM_V(\Sh_\sfK(G)_\IC)$ and for every object $(\IW, [\xi^\an]) \in \sM_V(T)$ there exists a unique morphism $\rho : T \to \Sh_\sfK(G)_\IC$ such that $\rho^* (\sfV_\IC, [\eta^\an_V]) \iso (\IW, [\xi^\an])$. 
\end{theorem}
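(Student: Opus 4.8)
The plan is to first unwind the contraction--product construction of the automorphic sheaves to see that $(\sfV_\IC, [\eta^\an_V])$ is an object of $\sM_V(\Sh_\sfK(G)_\IC)$, then to build the classifying map analytically over a universal cover of the source and descend it, and finally to promote it to a morphism of varieties via Borel's algebraicity theorem, extracting uniqueness by reversing the construction. For the object claim: $\sfV_\IC = (\sfV_B, \sfV_{\dR, \IC})$ is a polarizable VHS by the construction of the automorphic VHS; for $\sfV_\IC$-rationality in the sense of \ref{def: level on sfV}, each $v \in I := (V^\tensor)^G$ is $G(\IQ)$-invariant and hence defines a global section of $\sfV_B^\tensor = V^\tensor \times^{G(\IQ)} [\Ohm \times G(\IA_f)/\sfK]$; at each point the Hodge structure is given by some $h \in \Ohm$, which factors through $G_\IR$ and so fixes $v$, making this section everywhere of Hodge type $(0,0)$; its \'etale realization is $\eta^\an_V(v)_\et$ by the definition of the tautological level structure (see \ref{sec: define eta(v)}); and the fibrewise identification $(\sfV_{B, s}, \{v\}_{v \in I}) \cong (V, \{v\}_{v \in I})$ and the property of being of type $\Ohm$ (\ref{def: HS type}) read off at once from the contraction--product description.

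Next I would construct the morphism analytically. We may assume $T$ connected. Let $p : \tilde T \to T$ be a universal cover with $\pi := \pi_1(T, t)$, and pull $(\IW, [\xi^\an])$ back to $\tilde T$. Since $V$ is faithful, $G$ is the pointwise stabilizer in $\GL(V)$ of the tensors in $I$ (by Chevalley's theorem), so the sheaf $\underline{\Isom}\big((V, \{v\}_{v \in I}), (\IW_B, \{\xi(v)_B\}_{v \in I})\big)$ is a torsor under $G(\IQ)$, trivial over the simply connected $\tilde T$; I fix a trivialization $\beta : V \tensor \underline{\IQ} \sto p^* \IW_B$ compatible with the marked tensors. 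Transporting the Hodge filtration of $p^* \IW_\dR$ through $\beta$ gives a holomorphically varying filtration on $V$, which by the type-$\Ohm$ hypothesis equals $F_{\phi(\tilde t)}$ for a unique $\phi(\tilde t) \in \Ohm$, defining a holomorphic map $\phi : \tilde T \to \Ohm$; trivializing $p^*[\xi^\an]$ against $\beta \tensor \IA_f$ gives a locally constant, hence constant, map with value $g\sfK \in G(\IA_f)/\sfK$ --- it lands in $G(\IA_f)/\sfK$ because both $\beta \tensor \IA_f$ and any representative of $\xi^\an$ carry $v \tensor 1$ to the \'etale realization of the $\IQ$-tensor $\xi(v)_B$, so their comparison fixes $I$. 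The monodromy of $\IW_B$ preserves the $\IQ$-structure and the marked tensors, hence becomes a homomorphism $\mu : \pi \to G(\IQ)$ via $\beta$, and $(\phi, g) : \tilde T \to \Ohm \times G(\IA_f)$ is equivariant for the diagonal action of $\pi$ through $\mu$, so it descends to a holomorphic map $\rho^\an : T \to G(\IQ) \backslash \Ohm \times G(\IA_f)/\sfK = \Sh_\sfK(G)_\IC$. One checks $\rho^\an$ is independent of the choice of $\beta$ and of representatives, and that $\beta$ itself furnishes, over $\tilde T$ and $\pi$-equivariantly, an isomorphism $(\rho^\an)^*(\sfV_\IC, [\eta^\an_V]) \cong (\IW, [\xi^\an])$, which therefore descends to $T$.

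To finish I would upgrade $\rho^\an$ to an algebraic morphism and prove uniqueness. Since $\sfK$ is neat, $\Sh_\sfK(G)_\IC$ is a finite disjoint union of quotients of $\Ohm$ by arithmetic torsion-free groups, realized by Baily--Borel as smooth quasi-projective complex varieties; by Borel's extension theorem the holomorphic map $\rho^\an$ from the smooth algebraic variety $T$ is induced by a morphism of $\IC$-varieties $\rho : T \to \Sh_\sfK(G)_\IC$, which then satisfies $\rho^*(\sfV_\IC, [\eta^\an_V]) \cong (\IW, [\xi^\an])$. For uniqueness, since a morphism of $\IC$-varieties into a separated target is determined by its action on $\IC$-points, it suffices to recover $\rho^\an$ from $(\IW, [\xi^\an])$; but given any $\rho$ as in the statement, lifting it to $(\phi', g') : \tilde T \to \Ohm \times G(\IA_f)$ and pulling back along this lift the tautological trivializations of the automorphic sheaves over the uniformization $\Ohm \times G(\IA_f)$ of $\Sh(G)_\IC$ recovers precisely a datum of the kind used in the construction (with period map $\phi'$ and constant $g'$), so running the construction on $(\IW, [\xi^\an])$ returns $\rho$.

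The main obstacle --- the one genuinely non-formal ingredient --- is the algebraicity of $\rho^\an$, which rests on the Baily--Borel realization of $\Sh_\sfK(G)_\IC$ as a quasi-projective variety together with Borel's extension theorem; the remaining steps amount to careful bookkeeping with the contraction--product description of the automorphic sheaves and with the tensor characterization of $G$.
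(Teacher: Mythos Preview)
Your proposal is correct and takes essentially the same approach as the paper: the paper defers to \cite[Prop.~3.10]{Mil94} and sketches exactly the argument you carry out --- pointwise matching via the complex uniformization and the type-$\Ohm$ hypothesis, holomorphicity from the variation of Hodge filtration, and algebraicity via the Baily--Borel/Borel extension theorem. You supply more detail (the universal cover, the Chevalley-type identification of $G$ as the stabilizer of its tensor invariants, the explicit descent of $\rho^\an$, and the uniqueness argument), but these are precisely the ingredients implicit in the paper's sketch.
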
 
\begin{proof}
    This follows from \cite[Prop.~3.10]{Mil94}. Let $I$ be an indexing set of $(V^\tensor)^G$ and write the elements in $(V^\otimes)^G$ as $\{ s_\alpha \}_{\alpha \in I}$. That $[\xi^\an]$ is $\IW$-rational implies that for every $\alpha$, $\xi^\an(s_\alpha)_\et$ comes from a global section in $\H^0(\IW_B^\tensor)$ which is everywhere of Hodge type $(0, 0)$ (see \ref{def: level on sfV}) and let us denote it by $\bt_{\alpha, B}$. It is not hard to check that $[\eta^\an_V]$ is $\sfV_\IC$-rational, and we let $\bs_{\alpha, B} \in \H^0(\sfV_B)$ be the global section defined by $\eta_V(s_\alpha)_\et$. 
    
    Here is a sketch of the argument in \textit{loc. cit.} in our notations: Using the complex uniformization of $\Sh_\sfK(G)(\IC)$ and the fact that both $[\eta^\an_V]$ and $[\xi^\an]$ are of $\Ohm$-type, one checks that for each $t \in T(\IC)$, there is a unique $s \in \Sh_\sfK(G)(\IC)$ such that $(\IW_t, \{ \bt_{\alpha, B, t} \}, [\xi^\an]_t) \iso ((\sfV_{B, s}, \sfV_{\dR, s}), \{ \bs_{\alpha, B, s} \}, [\eta^\an_V]_s)$, so that $t \mapsto s$ defines a map of sets $T(\IC) \to \Sh_\sfK(G)(\IC)$. Then one uses the assumption that $\IW$ is a VHS (so that the Hodge filtration on $\IW_t$ varies holomorphically with $t$) to show that the map is holomorphic. Finally, a theorem of Baily and Borel (recalled in Thm~2.24 in \textit{loc. cit.}) says that any such holomorphic map is algebraic. 
\end{proof}

\begin{remark}
\label{rmk: rigid}
    We remark that the assumption (\ref{eqn: assumption on center}) together with the neatness of $\sfK$ implies that for every $(\IW, [\xi^\an]) \in \sM_V(T)$ as above, the $\sfK$-level structure $[\xi^\an]$ is rigid, as defined in \ref{def: rigid} (cf. \cite[\S2.1]{Taelman2}, \cite[Rmk~3.11]{Mil94}). In particular, the groupoid $\sM_V(T)$ is in fact a set. The above theorem implies that $\Sh_\sfK(G)_\IC$ represents the functor $\sM_V(-)$ from the category of smooth $\IC$-varieties to sets.
\end{remark}

\begin{remark}
\label{rmk: rational level structure}
    Unlike the set up in \cite[Prop.~3.10]{Mil94}, we do not keep a collection of global Hodge tensors ($\mathfrak{s}$ in Milne's notation, which plays the role of $\{ \bt_{\alpha, B} \}$ in ours) of $\IW$ as part of the data for objects parametrized by $\Sh_\sfK(G)_\IC$. Instead, we impose the condition that the $\sfK$-level structure $[\xi^\an]$ needs to be $\IW$-rational. Doing so does not change the mathematical content because $\bt_{\alpha, B}$'s are uniquely determined by the level structure $[\xi^\an]$ provided that they exist. 
\end{remark}

\subsection{Constructing Systems of Realizations}

We shall work under the following assumption: 
\begin{definition}
    \label{def: nice cover}
    We say that a Shimura datum $(G, \Ohm)$ of abelian type which satisfies (\ref{eqn: assumption on center}) has a nice Hodge-type cover if there exists a morphism of Shimura data $(\wt{G}, \wt{\Ohm}) \to (G, \Ohm)$ such that 
    \begin{enumerate}[label=\upshape{(\roman*)}]
        \item $(\wt{G}, \wt{\Ohm})$ is of Hodge type and also satisfies assumption (\ref{eqn: assumption on center});
        \item $\wt{G} \to G$ is surjective and the kernel lies in the center of $\wt{G}$; 
        \item the induced embedding on reflex fields $E(G, \Ohm) \subseteq E(\wt{G}, \wt{\Ohm})$ is an equality.
    \end{enumerate}
\end{definition}
In the above, by ``cover'' we just mean that $\wt{G}$ can be viewed as a cover of $G$. The map between Hermitian symmetric domains $\wt{\Ohm} \to \Ohm$ is usually not surjective, but only a cover over some connected components of $\Ohm$. In practice, $\Sh(\wt{G}, \wt{\Ohm})$ does function as an ``\'etale cover'' of $\Sh(G, \Ohm)$ up to taking Hecke translates (e.g., in the proof of \ref{thm: autVB exists} below).

In general, a Shimura datum $(G, \Ohm)$ is said to be of abelian type if there exists a Shimura datum $(\wt{G}, \wt{\Ohm})$ of Hodge type and a central isogeny between derived subgroups $\wt{G}^{\mathrm{der}} \to G^{\mathrm{der}}$ which induces an isomorphism between the adjoint Shimura data $(\wt{G}^{\mathrm{ad}}, \wt{\Ohm}^{\mathrm{ad}}) \sto (G^{\mathrm{ad}}, \Ohm^{\mathrm{ad}})$, so that the relationship between $(G, \Ohm)$ and $(\wt{G}, \wt{\Ohm})$ is more indirect than the one in \ref{def: nice cover}; in particular, $E(G, \Ohm)$ could be strictly smaller than $E(\wt{G}, \wt{\Ohm})$. 

\begin{theorem}
\label{thm: autVB exists}
    Assume that $(G, \Ohm)$ is a Shimura datum of abelian type which satisfies (\ref{eqn: assumption on center}) and has a nice Hodge-type cover. Let $\sfK \subseteq G(\IA_f)$ be a neat compact open subgroup. Let $G \to \GL(V)$ be a representation and $(\sfV_B, \sfV_{\dR, \IC})$ (resp. $\sfV_\et$) be the associated automorphic VHS over $\Sh_\sfK(G)_\IC$ (resp. automorphic \'etale local system on $\Sh_\sfK(G)$). 

    Then there exists a unique descent $\sfV_\dR$ of $\sfV_{\dR, \IC}$ to $\Sh_\sfK(G)$ such that $\sfV := (\sfV_B, \sfV_\dR, \sfV_\et)$ is an object of $\sfR^*_\am(\Sh_\sfK(G))$. Moreover, $\sfV$ lies in the subcategory $\sfR_\am(\Sh_\sfK(G))$. 
\end{theorem}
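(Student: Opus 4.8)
The plan is to handle the Hodge-type case by hand using a universal abelian scheme, and then to bootstrap to the general abelian-type case via the nice Hodge-type cover and the Galois descent engine \ref{prop: engine of desceding VdR}. The uniqueness clause will require no separate argument: once we know that \emph{some} descent $\sfV_\dR$ with $\sfV=(\sfV_B,\sfV_\dR,\sfV_\et)\in\sfR^*_\am(\Sh_\sfK(G))$ exists, Corollary \ref{cor: uniqueness of VB descent} applies verbatim (the étale descent $\sfV_\et$ is already in hand). So the content is existence.

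\emph{The Hodge-type case.} Suppose first that $(G,\Ohm)$ is itself of Hodge type, so that it is its own nice cover. Choose a symplectic embedding $G\hookrightarrow\GSp(W,\psi)$. After shrinking $\sfK$ (harmless, since one descends the level at the end and $\sfR^*_\am$, $\sfR_\am$ behave well under pullback, cf.\ \ref{sec: motivation for weakly AM}(ii) and \ref{rmk: strongly AM}) we may assume $\Sh_\sfK(G)$ carries a universal abelian scheme $\sA$ over $E=E(G,\Ohm)$, whose first cohomology defines a system $\sfH\in\sfR^\circ_\am(\Sh_\sfK(G))$ with $\sfH_B\tensor\IA_f=\sfV^W_\et$ (the automorphic étale local system of $W$) and $(\sfH_B,\sfH_\dR|_{\Sh_\sfK(G)_\IC})$ the automorphic VHS of $W$. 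Since $G$ is reductive and $W$ is faithful, $V$ is a direct summand of $W^\tensor$, cut out by a $G$-equivariant idempotent $\pi$. Let $\sfH^\natural$ be the corresponding tensor construction on $\sfH$; it lies in $\sfR^\circ_\am(\Sh_\sfK(G))\subseteq\sfR^*_\am(\Sh_\sfK(G))$ by Proposition \ref{prop: strong implies weak}, and so does $\End(\sfH^\natural)$. Functoriality of the automorphic construction turns the $G$-invariant tensor $\pi$ into a global flat section $\bpi_B$ of $\End(\sfH^\natural)_B$ everywhere of Hodge type $(0,0)$; its étale realization descends to $\Sh_\sfK(G)$ because it equals $\eta(\pi)_\et$ for the tautological level structure, which is defined over $E$ (see \ref{sec: define eta(v)} and the construction of $\sfV_\et$). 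By \ref{sec: descent of one component} its de Rham realization then descends too, so $\pi$ gives an idempotent $\bpi$ of $\sfH^\natural$ in $\sfR^*_\am(\Sh_\sfK(G))$. Setting $\sfV:=\im(\bpi)$ yields an object of $\sfR^\circ_\am(\Sh_\sfK(G))$; by the characterization in Theorem \ref{thm: functoriality of automorphic sheaves} one has $\sfV_B=\sfV^V_B$, $\sfV_\et=\sfV^V_\et$ and $\sfV_\dR|_{\Sh_\sfK(G)_\IC}=\sfV^V_{\dR,\IC}$, which is the required descent.

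\emph{The general case.} Fix a nice Hodge-type cover $(\wt G,\wt\Ohm)\to(G,\Ohm)$ and let $\wt V$ be the pullback of $V$ to $\wt G$. Choose a neat $\wt\sfK\subseteq\wt G(\IA_f)$; by the Hodge-type case the automorphic system $\wt\sfV=(\sfV^{\wt V}_B,\sfV^{\wt V}_\dR,\sfV^{\wt V}_\et)$ of $\wt V$ lies in $\sfR_\am(\Sh_{\wt\sfK}(\wt G))$. Since $\wt G\to G$ is surjective with central kernel and, by condition (iii) of \ref{def: nice cover}, the reflex fields agree, the morphism $\Sh_{\wt\sfK}(\wt G)\to\Sh_{\sfK'}(G)$ together with finitely many Hecke translates by elements of $G(\IA_f)$ yields — by the standard structure theory of abelian-type Shimura varieties (cf.\ \cite[\S2.2]{KisinInt}, \cite[\S II.4]{Milne:CanonicalModels}) — a finite étale cover $\pi\colon S\to\Sh_\sfK(G)$ of smooth $E$-varieties, with $S$ a disjoint union of open-and-closed pieces of Hecke translates of $\Sh_{\wt\sfK}(\wt G)$, such that $\pi^*$ of the automorphic VHS (resp.\ automorphic étale local system) of $V$ pulls back to the corresponding automorphic objects for $\wt V$ on those pieces. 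Hence $\pi^*$ of the complex system $\sfV^V_\IC=(\sfV^V_B,\sfV^V_{\dR,\IC})$ descends to a system $\sfV'\in\sfR_\am(S)$ (open-closed restrictions and Hecke translations preserve $\sfR_\am$), with $\sfV'_\et=\pi^*\sfV^V_\et$. Now apply Lemma \ref{prop: engine of desceding VdR} to $S\to T:=\Sh_\sfK(G)$: we have $\sfV'\in\sfR_\am(S)$, its restriction over $S_\IC$ descends to $\sfV^V_\IC\in\sfR(\Sh_\sfK(G)_\IC)$, and $(\sfV'_\et,\ \sfV^V_B\tensor\IA_f)$ admits a simultaneous descent, namely the automorphic étale local system $\sfV^V_\et$ over $\Sh_\sfK(G)$. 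The lemma then produces a simultaneous descent $\sfV_\dR$ of $(\sfV'_\dR,\sfV^V_{\dR,\IC})$ with $\sfV:=(\sfV^V_B,\sfV_\dR,\sfV^V_\et)\in\sfR^*_\am(\Sh_\sfK(G))$, and its ``moreover'' clause upgrades this to $\sfV\in\sfR_\am(\Sh_\sfK(G))$. Uniqueness is again Corollary \ref{cor: uniqueness of VB descent}.

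\emph{Main obstacle.} The technical heart is the construction of the étale cover $S\to\Sh_\sfK(G)$ out of the Hodge-type cover: bookkeeping the canonical models over $E$ (here condition (iii) of \ref{def: nice cover} is essential), the compatibility of the automorphic VHS, de Rham bundle, and étale local systems under the Shimura morphism and under Hecke operators, and choosing finitely many translates that cover all connected components. The other foundational input — that the automorphic system attached to the standard symplectic representation is the cohomology of the universal abelian scheme over the reflex field, with matching étale realization — must also be cited carefully. Everything downstream of these two points is formal, running on \ref{prop: engine of desceding VdR}, \ref{prop: strong implies weak}, \ref{sec: descent of one component}, and \ref{thm: functoriality of automorphic sheaves}.
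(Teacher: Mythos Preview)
Your proposal is correct and follows essentially the same route as the paper: Hodge-type case via the universal abelian scheme and an idempotent cutting out $V$ from $W^\tensor$ (using \ref{sec: descent of one component} to descend $\bpi_\dR$), then the general case via the nice cover plus Hecke translates and \ref{prop: engine of desceding VdR}, with uniqueness from \ref{cor: uniqueness of VB descent}. Two small points of comparison: (i) your ``shrinking $\sfK$ is harmless'' is not just a pullback statement---going back \emph{up} in level is a genuine descent step, and the paper isolates this as a separate lemma (again an application of \ref{prop: engine of desceding VdR} to $\Sh_{\sfK'}(G)\to\Sh_\sfK(G)$); (ii) rather than assembling a single \'etale cover $S\to\Sh_\sfK(G)$, the paper works one Hecke translate $\fp_g=T_g\circ p_{\wt\sfK,\sfK_g}$ at a time, descends $\wt\sfV$ to each $\mathrm{im}(\fp_g)$ via \ref{prop: engine of desceding VdR}, and then glues across connected components using the uniqueness in \ref{cor: uniqueness of VB descent}. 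Your single-cover packaging is equivalent but hides exactly the bookkeeping you yourself flag as the ``main obstacle''.
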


We call $\sfV := (\sfV_B, \sfV_\dR, \sfV_\et)$ as above the automorphic system of realizations associated to $V$. Note that by \ref{cor: uniqueness of VB descent}, the descent $\sfV_\dR$ of $\sfV_{\dR, \IC}$ that makes $\sfV$ an object of $\sfR^*_\am(\Sh_\sfK(G))$ is automatically unique provided that it exists. We write $\sfV$ as $\sfV^\sfK = (\sfV^\sfK_B, \sfV^\sfK_\dR, \sfV^\sfK_\et)$ when we need to emphasize that we are considering objects at level $\sfK$. We start with a preliminary reduction. 

\begin{lemma}
\label{lem: uniquess of autVB} The truth of to \ref{thm: autVB exists} is independent of $\sfK$ (as long as $\sfK$ is neat). 
\end{lemma}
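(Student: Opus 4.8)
The plan is to reduce to the case of an inclusion $\sfK' \subseteq \sfK$ of neat compact open subgroups and to treat the two directions of the equivalence by separate, already-available tools. The reduction itself is immediate: any two neat compact open subgroups $\sfK_1, \sfK_2 \subseteq G(\IA_f)$ contain the compact open subgroup $\sfK_1 \cap \sfK_2$, which is again neat (a subgroup of a neat group is neat), so it suffices to prove that for neat $\sfK' \subseteq \sfK$ the statement of \ref{thm: autVB exists} holds for $\sfK$ if and only if it holds for $\sfK'$. I fix the representation $V$ and write $\pi\colon \Sh_{\sfK'}(G) \to \Sh_\sfK(G)$ for the transition morphism, a finite \'etale surjection of smooth $E$-varieties with $E := E(G, \Ohm)$. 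The first thing to put on record is that the constructions of the previous subsection are compatible with $\pi$: from the contraction-product description one obtains canonical identifications $\pi_\IC^* \sfV^\sfK_B \iso \sfV^{\sfK'}_B$, $\pi^* \sfV^\sfK_{\dR, \IC} \iso \sfV^{\sfK'}_{\dR, \IC}$ and $\pi^* \sfV^\sfK_\et \iso \sfV^{\sfK'}_\et$, where $\sfV^\sfK_\et$ is the automorphic \'etale local system over $\Sh_\sfK(G)$, defined unconditionally.

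For the direction ``$\sfK$ implies $\sfK'$'', suppose $\sfV^\sfK = (\sfV^\sfK_B, \sfV^\sfK_\dR, \sfV^\sfK_\et)$ lies in $\sfR_\am(\Sh_\sfK(G))$. Then $\pi^* \sfV^\sfK = (\sfV^{\sfK'}_B, \pi^* \sfV^\sfK_\dR, \sfV^{\sfK'}_\et)$ has $\pi^* \sfV^\sfK_\dR$ as a descent of $\sfV^{\sfK'}_{\dR, \IC}$; it lies in $\sfR^*_\am(\Sh_{\sfK'}(G))$ by \ref{sec: motivation for weakly AM}(ii), and in $\sfR_\am(\Sh_{\sfK'}(G))$ because being strongly abelian motivic is stable under base change (\ref{def: strongly AM}, \ref{rmk: strongly AM}) — pull back a finite \'etale trivializing cover along $\pi$. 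Uniqueness of the de Rham descent is automatic from \ref{cor: uniqueness of VB descent}, so \ref{thm: autVB exists} holds for $\sfK'$.

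For the substantive direction ``$\sfK'$ implies $\sfK$'', I would invoke \ref{prop: engine of desceding VdR} with $S = \Sh_{\sfK'}(G)$, $T = \Sh_\sfK(G)$, $k = E$, and the \'etale cover $\pi$. Using the identifications above, $\sfV^{\sfK'}|_{S_\IC}$ descends along $\pi_\IC$ to $\sfW_\IC := (\sfV^\sfK_B, \sfV^\sfK_{\dR, \IC}, \sfV^\sfK_\et|_{T_\IC})$, which is an object of $\sfR(T_\IC)$ since $(\sfV^\sfK_B, \sfV^\sfK_{\dR, \IC})$ is the automorphic polarizable VHS and $\sfV^\sfK_\et|_{T_\IC} \iso \sfV^\sfK_B \tensor \IA_f$ by construction; and the automorphic \'etale local system $\sfV^\sfK_\et$ over $T$, together with the canonical identifications $\sfV^\sfK_\et|_S \iso \sfV^{\sfK'}_\et$ and $\sfV^\sfK_\et|_{T_\IC} \iso \sfV^\sfK_\et|_{T_\IC}$, furnishes a simultaneous descent of $(\sfV^{\sfK'}_\et, \sfV^\sfK_\et|_{T_\IC})$ to $T$ in the sense of \ref{sec: simultaneous descent}. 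Then \ref{prop: engine of desceding VdR} outputs a filtered flat vector bundle $\sfW_\dR$ over $T$, a simultaneous descent of $(\sfV^{\sfK'}_\dR, \sfV^\sfK_{\dR, \IC})$, with $(\sfV^\sfK_B, \sfW_\dR, \sfV^\sfK_\et) \in \sfR^*_\am(T)$, and with $\sfR_\am$ in place of $\sfR^*_\am$ because $\sfV^{\sfK'} \in \sfR_\am(S)$. Setting $\sfV^\sfK_\dR := \sfW_\dR$ (unique by \ref{cor: uniqueness of VB descent}) establishes \ref{thm: autVB exists} for $\sfK$.

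I expect the only point deserving care, rather than a genuine obstacle, to be the verification that the hypotheses of \ref{prop: engine of desceding VdR} are literally met — that the automorphic sheaves are compatible with pullback along the transition maps (so that $\sfV^{\sfK'}|_{S_\IC}$ descends to $T_\IC$ with the automorphic data as descent datum) and that the automorphic \'etale local system over $\Sh_\sfK(G)$ provides the required simultaneous descent of the \'etale component. Both are immediate from the construction in the previous subsection, but they are precisely what makes the lemma a formal consequence of \ref{prop: engine of desceding VdR} together with \ref{sec: motivation for weakly AM}(ii).
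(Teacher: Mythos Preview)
Your proposal is correct and follows essentially the same approach as the paper: reduce to an inclusion $\sfK' \subseteq \sfK$, handle the downward direction by pullback, and handle the upward direction by invoking \ref{prop: engine of desceding VdR} using that the automorphic VHS and \'etale local system at level $\sfK$ already exist and provide the required descents. The only addition over the paper's version is your explicit remark that any two neat $\sfK_1,\sfK_2$ are compared via $\sfK_1\cap\sfK_2$, which the paper leaves implicit.
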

\begin{proof}
    Suppose that $\sfK' \subseteq \sfK$. If we have already constructed the desired $\sfV_\dR^\sfK$ for $\sfK$ over $S := \Sh_{\sfK}(G)$, then we can simply pullback $\sfV_\dR^\sfK$ to $S' := \Sh_{\sfK'}(G)$, since the pullback of $(\sfV_B^\sfK, \sfV^\sfK_{\dR, \IC})$ to $S'_\IC$ and that of $\sfV^\sfK_\et$ to $S'$ are identified with $(\sfV_B^{\sfK'}, \sfV_{\dR, \IC}^{\sfK'})$ and $\sfV_\et^{\sfK'}$ respectively. It is clear that the pullback of an object in $\sfR_\am(S)$ is an object in $\sfR_\am(S')$. Conversely, suppose that we have constructed $\sfV_\dR^{\sfK'}$ over $S'$ so that $(\sfV^{\sfK'}_B, \sfV^{\sfK'}_\dR, \sfV^{\sfK'}_\et) \in \sfR_\am(\Sh_{\sfK'}(G))$. Note that $(\sfV^{\sfK'}_B, \sfV^{\sfK'}_{\dR, \IC})$ descends to $S'_\IC$ and $\sfV^{\sfK'}_\et$ descends to $S$ because $(\sfV^\sfK_B, \sfV^\sfK_{\dR, \IC})$ and $\sfV^\sfK_\et$ already exist. We conclude by \ref{prop: engine of desceding VdR} that the object $(\sfV^{\sfK'}_B, \sfV^{\sfK'}_\dR, \sfV^{\sfK'}_\et) \in \sfR_\am(S')$ descends to $S$. In particular, this gives the desired $\sfV_\dR^\sfK$. 
\end{proof}

\begin{lemma}
\label{thm: autVH Hodge type}
    Theorem~\ref{thm: autVB exists} holds if the Shimura datum $(G, \Ohm)$ in question is of Hodge type.  
\end{lemma}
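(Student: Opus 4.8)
The plan is to construct the required de Rham descent geometrically from a family of abelian varieties produced by a symplectic embedding. By Lemma~\ref{lem: uniquess of autVB} we are free to replace $\sfK$ by any convenient neat level. Since $(G,\Ohm)$ is of Hodge type, fix a symplectic space $(W,\psi)$ over $\IQ$, a morphism of Shimura data $(G,\Ohm)\hookrightarrow(\GSp(W,\psi),\mathfrak{H}^\pm)$ with $G\hookrightarrow\GSp(W,\psi)$ a closed immersion, and a neat compact open $\sfK'\subseteq\GSp(W,\psi)(\IA_f)$; set $\sfK:=G(\IA_f)\cap\sfK'$. After shrinking $\sfK'$ we obtain a morphism of Shimura varieties $\iota\colon\Sh_\sfK(G)\to\Sh_{\sfK'}(\GSp(W,\psi))_E$ defined over $E:=E(G,\Ohm)$, where we recall $E(\GSp(W,\psi),\mathfrak{H}^\pm)=\IQ$. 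Let $\shA$ be the universal abelian scheme on the Siegel variety $\Sh_{\sfK'}(\GSp(W,\psi))$ (it exists over $\IQ$ as $\sfK'$ is neat), and put $\sA:=\iota^\ast\shA$, an abelian scheme over $\Sh_\sfK(G)$ defined over $E$.

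First I would handle the standard representation $V=W$, the restriction to $G$ of the tautological representation of $\GSp(W,\psi)$. Let $\sfH=(\sfH_B,\sfH_\dR,\sfH_\et)$ be the system of realizations attached to the first homology of $\sA/\Sh_\sfK(G)$: thus $\sfH_\dR$ is the dual of the first relative de Rham cohomology (a filtered flat bundle over the $E$-scheme $\Sh_\sfK(G)$), $\sfH_B$ the first homology local system with its polarizable Hodge structure over $\Sh_\sfK(G)_\IC$, $\sfH_\et$ the $\IA_f$-Tate module, with the classical comparison isomorphisms; then $\sfH\in\sfR^\circ_\am(\Sh_\sfK(G))$ by Definition~\ref{def: strongly AM}. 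The key point is that $\sfH$ is the automorphic system associated to $W$. Granting that the canonical model $\Sh_{\sfK'}(\GSp(W,\psi))$ carries $\shA$ as its moduli interpretation, the automorphic VHS and automorphic \'etale sheaf of the tautological $\GSp(W,\psi)$-representation are, by construction, the first homology of $\shA$ in their respective realizations, and the tautological level structure of the former corresponds to the trivialization of the $\IA_f$-Tate module at infinite level. Pulling back along $\iota$ and using the characterizing properties in Theorem~\ref{thm: functoriality of automorphic sheaves} (compatibility over $\Sh_\sfK(G)_\IC$ and with the trivializations over $\Sh(G)$), one identifies $(\sfH_B,\sfH_{\dR,\IC})$ with the automorphic VHS $(\sfV^W_B,\sfV^W_{\dR,\IC})$ of $W$ and $\sfH_\et$ with the automorphic \'etale sheaf $\sfV^W_\et$ as a descent of $\sfV^W_B\tensor\IA_f$. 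Hence $\sfH_\dR$ is a de Rham descent of $\sfV^W_{\dR,\IC}$ making $(\sfV^W_B,\sfH_\dR,\sfV^W_\et)$ an object of $\sfR^\circ_\am(\Sh_\sfK(G))\subseteq\sfR_\am(\Sh_\sfK(G))$; its uniqueness among descents with this property is automatic from Corollary~\ref{cor: uniqueness of VB descent}. This proves Theorem~\ref{thm: autVB exists} for $V=W$.

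For a general $G$-representation $V$, I would use that $G$ is reductive and $G\hookrightarrow\GL(W)$, so $V$ is a direct summand of some $N:=\bigoplus_j W^{\tensor a_j}\tensor(W^\vee)^{\tensor b_j}$, cut out by an idempotent $\pi\in\End_G(N)$. The tensor-functorial automorphic constructions applied to $\sfH$ produce an object $\sfH^N\in\sfR^\circ_\am(\Sh_\sfK(G))$ realizing $N$; by Theorem~\ref{thm: functoriality of automorphic sheaves}, $\pi$ induces an idempotent endomorphism $\bpi_\et$ of $\sfH^N_\et$ over $E$ and an idempotent endomorphism $(\bpi_B,\bpi_{\dR,\IC})$ of the VHS $(\sfH^N_B,\sfH^N_{\dR,\IC})$ over $\IC$, agreeing over $\Sh_\sfK(G)_\IC$. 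Since $\sfH^N\in\sfR^*_\am(\Sh_\sfK(G))$, Lemma~\ref{lem: descent of morphisms of systems} shows that these assemble to a single idempotent $\bpi=(\bpi_B,\bpi_\dR,\bpi_\et)$ in $\End_{\sfR(\Sh_\sfK(G))}(\sfH^N)$. Then $\sfV:=\mathrm{im}(\bpi)\in\sfR^\circ_\am(\Sh_\sfK(G))$ by Definition~\ref{def: strongly AM}, and since the automorphic constructions are tensor functors (hence preserve images of idempotents), the complex VHS and the \'etale sheaf of $\sfV$ are the automorphic ones attached to $V$. Thus $\sfV_\dR$ is the required de Rham descent, unique by Corollary~\ref{cor: uniqueness of VB descent}, and $\sfV\in\sfR^\circ_\am(\Sh_\sfK(G))\subseteq\sfR_\am(\Sh_\sfK(G))$, completing the Hodge-type case.

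The step I expect to be the main obstacle is the identification, in the second paragraph, of the abstractly (group-theoretically) defined automorphic system of the standard representation with the homology of $\sA=\iota^\ast\shA$: one must match the de Rham and \'etale comparison data and the tautological level structure precisely enough to apply the uniqueness in Theorem~\ref{thm: functoriality of automorphic sheaves}, which ultimately rests on the known moduli interpretation of the canonical model of the Siegel Shimura variety, together with the usual bookkeeping of duals and Tate twists. Everything after that is formal, using only that the automorphic constructions form tensor functors and that strongly AM objects are stable under tensor constructions and images of idempotents.
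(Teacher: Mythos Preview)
Your proposal is correct and follows essentially the same route as the paper: reduce to small $\sfK$, pull back the universal abelian scheme along the symplectic embedding to get a strongly AM system $\sfH$ realizing the standard representation, then for general $V$ cut out $\sfV$ from a tensor construction on $\sfH$ by an idempotent whose de Rham component descends because its \'etale component already does. The only cosmetic differences are that you treat $V=W$ as a separate preliminary step and invoke Lemma~\ref{lem: descent of morphisms of systems} where the paper cites \ref{sec: descent of one component} directly (these are equivalent), and you use homology where the paper uses cohomology---just keep the duals straight.
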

\begin{proof}
    By \ref{lem: uniquess of autVB} it suffices to consider the case when $\sfK$ is sufficiently small. By assumption, there exists an embedding of Shimura data $(G, \Ohm) \into (\GSp, \sH^\pm)$ where $\GSp = \GSp(H)$ for some symplectic space $H$ and $\sH^\pm$ is the Siegel half planes. Let $K \subseteq \GSp(\IA_f)$ be some sufficiently small compact open subgroup and assume $\sfK \subseteq K$ up to shrinking $\sfK$, so that we obtain a morphism $\Sh_\sfK(G) \to \Sh_K(\GSp)_E$, where $E := E(G, \Ohm)$ is the reflex field. Let $\sA$ be the universal abelian scheme over $\Sh_K(\GSp)$. Let $\sfH = (\sfH_B, \sfH_\dR, \sfH_\et)$ be the system of realizations over $S = \Sh_\sfK(G)$ given by the first cohomology of $\sA_S$.
    
    Using that $H$ is a faithful representation of $G$, we may find a $N \gg 0$ with the property that $V$ is isomorphic to a subquotient of $T_N(H) := \oplus_{a,b \le N} H^{\tensor a} \tensor (H^\vee)^{\tensor b}$ and define $T_N(\sfH)$ similarly. Since $G$ is reductive, there exists a $G$-invariant idempotent projector $\pi \in \End(T_N(H))$ such that $V \iso \mathrm{im}(\pi)$ as $G$-representations. Fix such an isomorphism. Note that we may view $\pi$ as a morphism $\IQ \to \End(T_N(H))$ (here $\IQ$ stands for the trivial representation of $G$). Therefore, by \ref{thm: functoriality of automorphic sheaves} we obtain a $\bpi_B \in \H^0(\End(T_N(\sfH_B)))$ which is everywhere of Hodge type $(0, 0)$ (i.e., $\bpi_B$ defines an endomorphism of $T_N(\sfH|_{S_\IC})$ as a VHS) and whose \'etale realization descends to a global section $\bpi_\et \in H^0(\End(T_N(\sfH_\et)))$. By \ref{sec: descent of one component}, the de Rham realization of $\bpi_B$ also descends to some $\bpi_\dR \in \H^0(\End(T_N(\sfH_\dR)))$, so that $\bpi := (\bpi_B, \bpi_\dR, \bpi_\et) \in \H^0(\End(T_N(\sfH)))$. Note that by \ref{thm: functoriality of automorphic sheaves} the fixed identification $V = \mathrm{im}(\pi)$ gives rise to identifications $\sfV |_{S_\IC} = \mathrm{im}(\bpi)|_{S_\IC}$ and $\sfV_\et = \mathrm{im}(\bpi_\et)$. Therefore, we may construct $\sfV_\dR$ by taking $\mathrm{im}(\bpi_\dR)$. Then by construction $\sfV = (\sfV_B, \sfV_\dR, \sfV_\et)$ is an object of $\sfR_\am(S)$. 
\end{proof}

Now we prove \ref{thm: autVB exists} in full: 
\begin{proof}
    Again by \ref{lem: uniquess of autVB} we may assume that $\sfK$ is sufficiently small. Then we choose a sufficiently small $\wt{\sfK} \subseteq \wt{G}(\IA_f)$ whose image lies in $\sfK$, so that we obtain a morphism $p_{\wt{\sfK}, \sfK} : \Sh_{\wt{\sfK}}(\wt{G}) \to \Sh_\sfK(G)$ from the morphism of Shimura data. Below we write $\wt{S}$ for $\Sh_{\wt{\sfK}}(\wt{G})$ and $S$ for $\Sh_\sfK(G)$. We note that (ii) implies that every connected component of $\wt{\Ohm}$ is mapped isomorphically onto some connected component of $\Ohm$, even though $\wt{\Ohm} \to \Ohm$ may not be surjective. Therefore, $p_{\wt{\sfK}, \sfK}$ is a finite \'etale cover of its image $\mathrm{im}(p_{\wt{\sfK}, \sfK})$, which is a union of connected components of $\Sh_\sfK(G)$.

    Viewing $V$ as a $\wt{G}$-representation via the projection $\wt{G} \to G$, we obtain the associated automorphic VHS on $\wt{S}_\IC$ and automorphic \'etale local system on $\wt{S}$. Denote them by $(\wt{\sfV}_B, \wt{\sfV}_{\dR, \IC})$ and $\wt{\sfV}_\et$ respectively. It is not hard to see that these are naturally identified with the pullback of $(\sfV_B, \sfV_{\dR, \IC})$ and $\sfV_\et$ along $p_{\wt{\sfK}, \sfK}$. By \ref{thm: autVH Hodge type} there exists a descent $\wt{\sfV}_{\dR}$ to $\wt{S}$ of $\wt{\sfV}_{\dR, \IC}$ such that $\wt{\sfV} := (\wt{\sfV}_B, \wt{\sfV}_\dR, \wt{\sfV}_\et)$ is an object of $\sfR_\am(\wt{S})$. Then we may construct the desired descent $\sfV_\dR$ over $\mathrm{im}(p_{\wt{\sfK}, \sfK})$ by applying \ref{prop: engine of desceding VdR} again. 

    In order to construct $\sfV_\dR$ on connected components not contained in $\mathrm{im}(p_{\wt{\sfK}, \sfK})$, we consider Hecke translates. Let $g \in G(\IA_f)$ be any element and set $\sfK_g := \sfK \cap g \sfK g^{-1}$ and $S_g := \Sh_{\sfK_g}(G)$. Then $S_g$ is a finite \'etale cover over $S$, and we denote the natural projection to $S$ by $p_g$. Note that $S_{g, \IC}$ has complex uniformization $G(\IQ) \backslash \Ohm \times G(\IA_f) / \sfK_g$. The automorphism $(x, g') \mapsto (x, g'g)$ on $\Ohm \times G(\IA_f)$ descends to a morphism $S_{g, \IC} \to S_\IC$, which we denote by $T_g$. It is well known that $T_g$ is defined over the reflex field, i.e., descends to $S_g \to S$.

    Now we recall that for varying $\sfK'$, the automorphic VHS $\{ (\sfV^{\sfK'}_B, \sfV^{\sfK'}_{\dR, \IC}) \}_{\sfK'}$ and the \'etale local systems $\{ \sfV^{\sfK'}_\et \}_{\sfK'}$ on $\Sh(G)$ (or its $\IC$-fiber) are $G(\IA_f)$-equivariant. In particular, there are natural identifications $T_{g, \IC}^* (\sfV^{\sfK}_B, \sfV^{\sfK}_{\dR, \IC}) = (\sfV^{\sfK_g}_B, \sfV^{\sfK_g}_{\dR, \IC})$ and $T_g^* (\sfV^{\sfK}_\et) = \sfV^{\sfK_g}_\et$. Without loss of generality we may assume that the image of $\wt{\sfK} \subseteq \wt{G}(\IA_f)$ in $G(\IA_f)$ lies in $\sfK_g$. Then $p_{\wt{\sfK}, \sfK}$ factors through $p_{\wt{\sfK}, \sfK_g}$. Consider now the composition 
    \[\begin{tikzcd}
	{\wt{S}} & {S_g}  & S
	\arrow["{p_{\wt{\sfK}, \sfK_g}}", from=1-1, to=1-2]
	\arrow["{T_g}", from=1-2, to=1-3]
    \end{tikzcd}\]
    which we shall denote by $\fp_g$. Then we have identifications $\fp_{g, \IC}^* (\sfV_B, \sfV_{\dR, \IC}) = (\wt{\sfV}_B, \wt{\sfV}_{\dR, \IC})$ and $\fp_g^* \sfV_\et = \wt{\sfV}_\et$. We may now apply \ref{prop: engine of desceding VdR} again to descend $\wt{\sfV} \in \sfR_\am(\wt{S})$ to $\mathrm{im}(\fp_g)$, which in particular defines a descent of $\sfV_{\dR, \IC}$ when restricted to $\mathrm{im}(\fp_g)_\IC$. For each connected component $S^\circ$ of $S$, and each $g \in G(\IA_f)$ such that $S^\circ \subseteq \mathrm{im}(\fp_g)$, the resulting descent of $\sfV_{\dR, \IC}|_{S^\circ_\IC}$ does not depend on $g$, by \ref{cor: uniqueness of VB descent}. We may now conclude using the fact that there is a finite subset $J \subseteq G(\IA_f)$ such that $S = \cup_{g \in J} \mathrm{im}(\fp_g)$. 
\end{proof}

\begin{theorem}
\label{thm: autSystem exists}
    For each neat $\sfK$, the association $V \mapsto \sfV^\sfK $ given by \ref{thm: autVB exists} defines a functor $\sF_\sfK : \mathrm{Rep}(G) \to \sfR_\am(\Sh_\sfK(G))$. Moreover, if $\sfK' \subseteq \sfK$, $\sF_{\sfK'}$ is the composition of $\sF_\sfK$ and the pullback functor $\sfR_\am(\Sh_{\sfK'}(G)) \to \sfR_\am(\Sh_\sfK(G))$; in other words, $\varprojlim_\sfK \sF_\sfK$ defines a functor from $\mathrm{Rep}(G)$ to the category of $G(\IA_f)$-equivariant objects in $\sfR_\am(\Sh(G))$. 
\end{theorem}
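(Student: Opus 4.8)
The plan is to deduce Theorem~\ref{thm: autSystem exists} by assembling the functoriality of the automorphic VHS and automorphic \'etale local systems from \ref{thm: functoriality of automorphic sheaves}, the descent criterion \ref{lem: descent of morphisms of systems}, and the uniqueness of the de Rham descent from \ref{cor: uniqueness of VB descent}. On objects the functor $\sF_\sfK$ is already supplied by \ref{thm: autVB exists}, so first I would promote a morphism $f\colon V\to W$ in $\mathrm{Rep}(G)$ to a morphism $\sF_\sfK(f)\colon\sfV^\sfK\to\sfW^\sfK$ in $\sfR_\am(\Sh_\sfK(G))$. By \ref{thm: functoriality of automorphic sheaves}, $f$ induces a morphism of VHS $(\sfV^\sfK_B,\sfV^\sfK_{\dR,\IC})\to(\sfW^\sfK_B,\sfW^\sfK_{\dR,\IC})$ over $\Sh_\sfK(G)_\IC$ and a morphism $\sfV^\sfK_\et\to\sfW^\sfK_\et$ over $\Sh_\sfK(G)$ whose base change to $\Sh_\sfK(G)_\IC$ is the \'etale realization of the former; together these constitute a morphism $\varphi_\IC\colon\sfV^\sfK|_{\Sh_\sfK(G)_\IC}\to\sfW^\sfK|_{\Sh_\sfK(G)_\IC}$ in $\sfR(\Sh_\sfK(G)_\IC)$ whose \'etale component descends to $\Sh_\sfK(G)$. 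Since $\sfV^\sfK$ and $\sfW^\sfK$ belong to $\sfR_\am(\Sh_\sfK(G))\subseteq\sfR^*_\am(\Sh_\sfK(G))$ (by \ref{prop: strong implies weak}), Lemma~\ref{lem: descent of morphisms of systems} then shows $\varphi_\IC$ descends to the desired $\sF_\sfK(f)$, which lies in the full subcategory $\sfR_\am(\Sh_\sfK(G))$ automatically.

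To check that $\sF_\sfK$ respects identities and composition (and is in fact a tensor functor), I would use that the restriction functor $\sfR(\Sh_\sfK(G))\to\sfR(\Sh_\sfK(G)_\IC)$ is faithful: a morphism of systems over $\Sh_\sfK(G)$ is a triple $(\varphi_B,\varphi_\dR,\varphi_\et)$, the component $\varphi_B$ is determined by the restriction to $\Sh_\sfK(G)_\IC$, and $\varphi_\dR$, $\varphi_\et$ are then the unique Galois descents (\ref{sec: sheaves descent iso}) of $\varphi_{\dR,\IC}$ and of $\varphi_\et|_{\Sh_\sfK(G)_\IC}=\varphi_B\otimes\IA_f$. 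Since $V\mapsto(\sfV^\sfK_B,\sfV^\sfK_{\dR,\IC})$ is a functor and the identifications in \ref{thm: functoriality of automorphic sheaves} are compatible with tensor products, the composite with restriction is a tensor functor, hence so is $\sF_\sfK$ by faithfulness.

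For $\sfK'\subseteq\sfK$, write $\pi\colon\Sh_{\sfK'}(G)\to\Sh_\sfK(G)$. On objects, the canonical identification $\pi^*\sF_\sfK(V)\iso\sF_{\sfK'}(V)$ is precisely what is proved in \ref{lem: uniquess of autVB}: the Betti and \'etale components of $\pi^*\sfV^\sfK$ are canonically those of $\sfV^{\sfK'}$, while $\pi^*\sfV^\sfK_\dR$ is a descent of $\sfV^{\sfK'}_{\dR,\IC}$ that makes $\pi^*\sfV^\sfK$ an object of $\sfR^*_\am(\Sh_{\sfK'}(G))$, so it coincides with $\sfV^{\sfK'}_\dR$ by the uniqueness \ref{cor: uniqueness of VB descent}. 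On morphisms, $\pi^*\sF_\sfK(f)$ and $\sF_{\sfK'}(f)$ restrict over $\IC$ to the same map (by compatibility of the automorphic VHS with pullback) and hence agree by the faithfulness above, so $\sF_{\sfK'}$ is the composite of $\sF_\sfK$ with the pullback functor along $\pi$. Passing to the limit over $\sfK$, for each $g\in G(\IA_f)$ right translation by $g$ on $\Sh(G)$ acts on the automorphic VHS over $\Sh(G)_\IC$ and on the automorphic \'etale local system over $\Sh(G)$ through their standard $G(\IA_f)$-equivariant structures; transporting along the unique de Rham descents \ref{cor: uniqueness of VB descent} extends the action to the de Rham component, so $\varprojlim_\sfK\sF_\sfK$ lands in the category of $G(\IA_f)$-equivariant objects of $\sfR_\am(\Sh(G))$, functorially in $V$.

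I do not expect a genuine obstacle: the substantive input is already isolated in \ref{thm: autVB exists}, \ref{thm: functoriality of automorphic sheaves}, and Lemma~\ref{lem: descent of morphisms of systems}. The one point that will require care is purely bookkeeping — verifying that the de Rham component of each structural isomorphism (functoriality in $V$, change of level, Hecke action) is \emph{the} unique one compatible with the Betti and \'etale components, so that all naturality squares and the cocycle condition for the $G(\IA_f)$-action are inherited for free from the corresponding classical facts about $\sfV_B$ and $\sfV_\et$.
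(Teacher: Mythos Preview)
Your proposal is correct and follows essentially the same approach as the paper, which gives only the one-line proof ``This follows readily from \ref{thm: functoriality of automorphic sheaves} and \ref{lem: descent of morphisms of systems}.'' You have simply unpacked this: use \ref{thm: functoriality of automorphic sheaves} to produce the morphism of VHS over $\IC$ together with the already-descended \'etale component, then invoke \ref{lem: descent of morphisms of systems} (plus the uniqueness in \ref{cor: uniqueness of VB descent} and the level-compatibility from \ref{lem: uniquess of autVB}) to descend and to verify functoriality and the $G(\IA_f)$-equivariance.
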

\begin{proof}
    This follows readily from \ref{thm: functoriality of automorphic sheaves} and \ref{lem: descent of morphisms of systems}. 
\end{proof}
 
\subsection{Moduli Interpretations over Reflex Fields}

We are now ready to give the moduli interpretations of the Shimura varieties over the reflex fields, under our simplifying assumptions. 

\begin{theorem}
\label{thm: moduli over reflex field}
    Let $(G, \Ohm)$ be a Shimura datum of abelian type which satisfies (\ref{eqn: assumption on center}) and has a nice Hodge-type cover.
    Assume that $V \in \mathrm{Rep}(G)$ is faithful. Let $E' \subseteq \IC$ be a subfield which contains $E = E(G, \Ohm)$ and $T$ be a smooth $E'$-variety. Let $\sM_{V, E'}(T)$ be the groupoid of pairs $(\sfW, [\xi])$ where $\sfW = (\sfW_B, \sfW_\dR, \sfW_\et) \in \sfR^*_\am(T)$, and $[\xi]$ is a $\sfK$-level structure on $\sfW_\et$ such that $[\xi]$ is $\sfW$-rational and $(\sfW, [\xi])$ is of type $\Ohm$. 
    
    Then $(\sfV = (\sfV_B, \sfV_\dR, \sfV_\et), [\eta_V])$ defined by \ref{thm: autVB exists} with $[\eta_V]$ be the tautological $\sfK$-level structure is an object of $\sM_{V, E}(\Sh_\sfK(G))$, and for each $(\sfW, [\xi]) \in \sM_{V, E'}(T)$, there exists a unique $\rho : T \to \Sh_\sfK(G)_{E'}$ such that $(\sfW, [\xi]) \iso \rho^* (\sfV, [\eta_V])$. 
\end{theorem}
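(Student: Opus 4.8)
The plan is to bootstrap everything from the complex moduli interpretation \ref{thm: moduli over C} using the Galois-descent results \ref{sec: descent of one component} and \ref{lem: rigidity}. I would first check that $(\sfV,[\eta_V])$ really is an object of $\sM_{V,E}(\Sh_\sfK(G))$: that $\sfV\in\sfR^*_\am(\Sh_\sfK(G))$ is \ref{thm: autVB exists} combined with \ref{prop: strong implies weak}, while ``of type $\Ohm$'' and the pointwise isomorphism $(\sfV_{B,s},\{\eta_V(v)_{B,s}\})\iso(V,\{v\})$ for $v\in I:=(V^\tensor)^G$ are conditions on $\IC$-points only and are already part of the proof of \ref{thm: moduli over C}; the one extra point is that each tensor $\eta_V(v)$ descends to $E$ as a global section of $\sfV^\tensor$, but its \'etale component $\eta_V(v)_\et$ is defined over $E$ by construction of $\sfV_\et$ and $[\eta_V]$, hence by \ref{sec: descent of one component} (applicable since $\sfV\in\sfR^*_\am$) so is its de Rham component. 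Uniqueness of $\rho$ is then immediate: two candidates $\rho_1,\rho_2$ become equal over $\IC$ by the uniqueness clause of \ref{thm: moduli over C} --- note $(\sfW_\IC,[\xi^\an])\in\sM_V(T_\IC)$ because the defining conditions of $\sM_V$ are pointwise --- and morphisms of $E'$-varieties that agree after base change to $\IC$ agree.

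For existence, I would apply \ref{thm: moduli over C} to $(\sfW_\IC,[\xi^\an])\in\sM_V(T_\IC)$, obtaining a unique $\rho_\IC:T_\IC\to\Sh_\sfK(G)_\IC$ and an isomorphism $\theta:\rho_\IC^*(\sfV_\IC,[\eta^\an_V])\iso(\sfW_\IC,[\xi^\an])$, and then descend $\rho_\IC$ to a morphism $\rho:T\to\Sh_\sfK(G)_{E'}$. Since $\IC^{\Aut(\IC/E')}=E'$ and both varieties are of finite type, by standard Galois descent it suffices to prove that $\rho_\IC$ is equivariant for the natural $\Aut(\IC/E')$-actions on $\IC$-points, i.e.\ $\rho_\IC(\sigma(t))=\sigma(\rho_\IC(t))$ for every $t\in T(\IC)$ and $\sigma\in\Aut(\IC/E')$. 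Writing $s=\rho_\IC(t)$, by the construction in \ref{thm: moduli over C} the point $\rho_\IC(\sigma(t))$ is the unique $s'\in\Sh_\sfK(G)(\IC)$ admitting an isomorphism $(\sfW_{B,\sigma(t)},\sfW_{\dR,\sigma(t)})\iso(\sfV_{B,s'},\sfV_{\dR,s'})$ carrying $\{\xi^\an(v)_{B,\sigma(t)}\}$ to $\{\eta_V(v)_{B,s'}\}$ and $[\xi^\an]_{\sigma(t)}$ to $[\eta^\an_V]_{s'}$, so the task is to produce such an isomorphism with $s'=\sigma(s)$.

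This is where the weakly-AM hypothesis does its work. Pick a test object $(N,\delta)$ at $s$ for $\sfV$ (regarded over $E'$) to be weakly AM, and remember the resulting $\delta^\sigma:\w_\Hdg(N^\sigma)\iso(\sfV_{B,\sigma(s)},\sfV_{\dR,\sigma(s)})$. Then $(N,\gamma:=\theta_t\circ\delta)$, where $\theta_t$ is the fiber of $\theta$ at $t$, is a test object at $t$ for $\sfW$, with resulting $\gamma^\sigma:\w_\Hdg(N^\sigma)\iso(\sfW_{B,\sigma(t)},\sfW_{\dR,\sigma(t)})$, and I claim $\gamma^\sigma\circ(\delta^\sigma)^{-1}$ is the wanted isomorphism at $\sigma(s)$. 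Indeed, for $v\in I$ set $v_N:=\delta_B^{-1}(\eta_V(v)_{B,s})=\gamma_B^{-1}(\xi^\an(v)_{B,t})\in\w_B(N)^\tensor$, which is absolute Hodge since $N$ is an abelian motive; applying \ref{sec: descent of one component} to $\sfV$ (resp.\ to $\sfW$), whose tensors $\eta_V(v)$ (resp.\ $\xi(v)$) and level structures are defined over $E'$, shows $\delta^\sigma$ sends $v_N^\sigma$ to $\eta_V(v)_{B,\sigma(s)}$ while $\gamma^\sigma$ sends $v_N^\sigma$ to $\xi^\an(v)_{B,\sigma(t)}$, and likewise, with $[\eta_N]$ the pullback of $[\eta^\an_V]_s$ along $\delta_B\tensor\IA_f$ (equivalently of $[\xi^\an]_t$ along $\gamma_B\tensor\IA_f$), that $\delta^\sigma_B\tensor\IA_f$ and $\gamma^\sigma_B\tensor\IA_f$ send $[\eta_N]^\sigma$ to $[\eta^\an_V]_{\sigma(s)}$ and $[\xi^\an]_{\sigma(t)}$ respectively. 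Hence $\gamma^\sigma\circ(\delta^\sigma)^{-1}$ matches all tensors and level structures, so $\rho_\IC(\sigma(t))=\sigma(s)$, which is the required equivariance; thus $\rho$ exists.

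Finally I would descend the isomorphism itself. Once $\rho$ is defined over $E'$, both $\rho^*\sfV$ (with $\rho^*[\eta_V]$) and $\sfW$ (with $[\xi]$) lie in $\sfR^*_\am(T)$ and carry $\sfK$-level structures that are rigid in the sense of \ref{def: rigid} --- this is the pointwise content of \ref{rmk: rigid}, using \eqref{eqn: assumption on center}, neatness of $\sfK$, and the type-$\Ohm$ condition. Over $\IC$ the isomorphism of underlying VHS coming from $\theta$ extends uniquely to an isomorphism of systems of realizations compatible with the level structures (the \'etale component over $\IC$ is forced by the Betti one), and the Rigidity Lemma \ref{lem: rigidity} then shows it descends to an isomorphism $\rho^*(\sfV,[\eta_V])\iso(\sfW,[\xi])$ over $E'$, completing the proof. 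The main obstacle is precisely the descent of $\rho_\IC$: one must transport Hodge-theoretic data across an abstract automorphism $\sigma$ of $\IC$ between the points $t,\sigma(t)$ of $T$ and $s,\sigma(s)$ of $\Sh_\sfK(G)$, where Betti cohomology has no functoriality, and the weakly-AM condition is engineered exactly to bridge this --- the conjugate abelian motive $N^\sigma$ supplies the transported Betti realization together with its comparisons with the \'etale and de Rham ones --- so that, once that formalism is in place, the argument reduces to the diagram chase above.
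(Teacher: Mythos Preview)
Your proof is correct and follows essentially the same approach as the paper's: construct $\rho_\IC$ via \ref{thm: moduli over C}, verify Galois-equivariance of $\rho_\IC$ on $\IC$-points by transporting a test object through $\theta_t$ and invoking \ref{sec: descent of one component}, then descend the isomorphism with the Rigidity Lemma \ref{lem: rigidity}. The only cosmetic difference is that the paper starts with a test object $(M,\gamma)$ for $\sfW$ at $t$ and transports it via $\nu_t^{-1}$ to one for $\sfV$ at $x$, whereas you start from $\sfV$ at $s$ and push to $\sfW$ at $t$; and you are a bit more explicit than the paper about why $(\sfV,[\eta_V])\in\sM_{V,E}(\Sh_\sfK(G))$ and about uniqueness of $\rho$.
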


Note that the objects in $\sM_{V, E'}(T)$ above are rigid, so $\sM_{V, E'}(T)$ is a set (cf. \ref{rmk: rigid}). 

\begin{proof}
    By applying \ref{thm: moduli over C}, we first construct a morphism $\rho_\IC : T_\IC \to \Sh_\sfK(G)_\IC$ such that there is an isomorphism $$\nu : \rho_\IC^*(\sfV, [\eta_V]) \sto (\sfW,[\xi])|_{T_\IC}.$$ 
    
    Next, we argue that $\rho_\IC$ descends over $E'$. Let $t \in T(\IC)$ be a point and let $x \in \Sh_\sfK(G)(\IC)$ be its image under $\rho_\IC$. It suffices to show that for any $\sigma \in \Aut(\IC/ E')$, $\rho_\IC(\sigma(t)) = \sigma(x)$. Let $(M, \gamma)$ be a test object at $t$ for $\sfW$ to be weakly AM (see \ref{def: compatible with AM}). Let $[\xi_M]$ be the $\sfK$-level structure on $\w_\et(M)$ obtained by pulling back $[\xi]_t$ along $\gamma_B \tensor \IA_f$. Then by \ref{sec: descent of one component}, there is an isomorphism $(\sfW_{\sigma(t)}, [\xi]_{\sigma(t)}) \iso (\w_\Hdg(M^\sigma), [\xi_M]^\sigma)$. Note that the fiber $\nu_t$ of $\nu$ at $t$ is an isomorphism $(\sfV_x, [\eta_V]_x) \sto (\sfW_t, [\xi]_t)$. Therefore, $(M, \nu_t^{-1} \circ \gamma)$ is a test object at $x$ for $\sfV$ to be weakly AM. Moreover, $[\xi_M]$ is the pullback of $[\eta_V]_x$ along $(\nu_{t}^{-1} \circ \gamma)_B \tensor \IA_f$. By \ref{sec: descent of one component} again, there is an isomorphism $(\sfV_{\sigma(x)}, [\eta_V]_{\sigma(x)}) \iso (\w_\Hdg(M^\sigma), [\xi_M]^\sigma)$. Therefore, we have shown that $(\sfV_{\sigma(x)}, [\eta_V]_{\sigma(x)}) \iso (\sfW_{\sigma(t)}, [\xi]_{\sigma(t)})$ as they are both isomorphic to $(\w_\Hdg(M^\sigma), [\xi_M]^\sigma)$. This implies by \ref{thm: moduli over C} that $\rho_\IC(\sigma(t)) = \sigma(x)$. 

    Now we have shown that $\rho_\IC$ descends to a morphism $\rho : T \to \Sh_\sfK(G)_{E'}$, and there is an isomorphism between $(\sfW, [\xi])$ and $\rho^*(\sfV, [\eta_V])$ when restricted to $T_\IC$. We conclude by the rigidity lemma \ref{lem: rigidity} that in fact we must have $(\sfW, [\xi]) \iso \rho^*(\sfV, [\eta_V])$. 
\end{proof}

\begin{remark}
    The above proof is certainly inspired by that of \cite[Cor.~5.4]{MPTate}, which deals with the special case when $(G, \Ohm)$ is an orthogonal type Shimura datum, $V$ is the standard representation of $G$, $T$ is the moduli space of quasi-polarized K3 surfaces of a certain degree, and $(\sfW, [\xi])$ is given by the primitive cohomology of the universal family. We remark though that after Madapusi-Pera proved the rationality of the peirod map, he proved in \cite[Prop.~5.6]{MPTate} that $\rho^* \sfV_\et \iso \sfW_\et$ by resorting to Deligne's big monodromy trick. It is in fact unnecessary if we only work with neat level structures, by the rigidity lemma \ref{lem: rigidity}. 
\end{remark}

\printbibliography

\end{document}